\newtheorem{theorem}{Theorem}
\newtheorem{LE}[theorem]{Lemma}
\newtheorem{PR}[theorem]{Proposition}
\newtheorem{RE}[theorem]{Remark}
\newcounter{claim_nb}[theorem]
\newtheorem{claim}[claim_nb]{Claim}
\newtheorem*{claim*}{Claim}
\newcommand{\zB}{\mathcal B}
\newcommand{\zI}{\mathcal I}
\newcommand{\bfB}{\mathbf B}
\newcommand{\bS}{\mathbb S}
\newcommand{\bF}{\mathbb F}
\newcommand{\bL}{\mathbb L}
\newcommand{\dst}{\mathbb T}
\DeclareMathOperator{\cycle}{cycle}
\DeclareMathOperator{\ecycle}{ecycle}
\DeclareMathOperator{\cut}{cut}
\DeclareMathOperator{\ecut}{ecut}
\DeclareMathOperator{\spa}{span}
\DeclareMathOperator{\Wflip}{W_{\mbox{\tiny flip}}}
\DeclareMathOperator{\loops}{loop}
\newcommand{\tr}{\triangle}
\newcommand{\odd}{V_{odd}}
\newenvironment{cproof}
{\begin{proof}
 [Proof.]
 \vspace{-1.2\parsep}}
{ \end{proof}}
\title{Isomorphism for even cycle matroids - I}
\author{
Bertrand Guenin\\
Dept. of Combinatorics and Optimization\\
University of Waterloo\\
200 University Avenue\\
Waterloo, ON, Canada
\and
Irene Pivotto
\thanks{email:\texttt{ipivotto@sfu.ca}; phone:$(+1) 778\ 782\ 5754$}\\
Dept. of Mathematics\\
Simon Fraser University\\
8888 University Drive\\
Burnaby, BC, Canada
\and
Paul Wollan\\
Dept. of Computer Science\\
University of Rome, {\it La Sapienza}\\
Via Salaria, 113\\
Rome, Italy\\
}
\begin{document}
\maketitle

\begin{abstract}
A seminal result by Whitney describes when two graphs have the same cycles.
We consider the analogous problem for even cycle matroids.
A representation of an even cycle matroid is a pair formed by a graph together with a special set of edges of the graph. 
Such a pair is called a signed graph.
We consider the problem of  determining the relation between two signed graphs representing the same even cycle matroid.
We refer to this problem as the Isomorphism Problem for even cycle matroids.
We present two classes of signed graphs and we solve the Isomorphism Problem for these two classes.
We conjecture that, up to simple operations, any two signed graphs representing the same even cycle matroid are either in one of these classes,
or related by a modification of an operation for graphic matroids, or belonging to a small set of examples.
\end{abstract}

\section{Introduction} \label{sec:intro}
%----------------------------------------------------------------------------------------------------------------------------------
We assume that the reader is familiar with the basics of matroid theory. 
See Oxley~\cite{Oxley92} for the definition of the terms used here.
We will only consider binary matroids in this paper. Thus the reader should substitute
the term ``binary matroid" every time ``matroid" appears in this text.

Throughout this paper, we will consider graphs with multiple edges and loops.  Let $G$ be a graph. For a set $X\subseteq E(G)$, we write $V_G(X)$ to refer to the set of vertices incident with an edge of $X$ 
and $G[X]$ for the subgraph with vertex set $V_G(X)$ and edge set $X$.
A subset $C$ of edges of $G$ is a {\em cycle} if $G[C]$ is a graph where every vertex has even degree. 
An inclusion-wise minimal non-empty cycle is a {\em circuit}. 
We denote by $\cycle(G)$ the set of all cycles of $G$. 
A {\em cycle} in a binary matroid $M$ is the symmetric difference of circuits of $M$. 
Since the cycles of $G$ correspond to the cycles of the {\em cycle matroid} of $G$, 
we identify $\cycle(G)$ with that matroid and say that $G$ is a {\em representation} of that matroid. 
Cycle matroids are also referred to as graphic matroids.
The classes of matroids considered in this work all arise from graphs. 
Hence, when referring to a representation of a matroid we will always mean a graphic representation of the matroid.
When referring to a matrix representing a matroid over some field, %(which will usually be the binary field), 
we will refer to that matrix as the {\em matrix representation} of the matroid.

We may ask when two graphs represent the same cycle matroid. 
We define an operation on graphs which preserves cycles as follows. 
Given sets $A$ and $B$ we denote by $A-B$ the set $\{a\in A:a\notin B\}$.
Given a set of edges $X$ of $G$, we define the {\em boundary} of $X$ in $G$ as $\zB_G(X)=V_G(X)\cap V_G(\bar{X})$, where $\bar{X}=E(G)-X$
(we will always denote by $\bar{X}$ the complement of a set $X$).
Consider a graph $G$ and let $X\subseteq E(G)$.
Suppose that $\zB_G(X)=\{u_1, u_2\}$ for some $u_1,u_2\in V(G)$.
Let $G'$ be the graph obtained by identifying vertices $u_1,u_2$ of $G[X]$ 
with vertices $u_2,u_1$ of $G[\bar{X}]$ respectively. 
Then $G'$ is obtained from $G$ by a {\em Whitney-flip} on $X$.
We will also call Whitney-flip the operation consisting of identifying
two vertices from distinct components, or the operation 
consisting of partitioning the graph into components each of which is a block of $G$.

It is easy to see that two graphs related by a sequence of Whitney-flips have the same cycles; 
in particular, they are representations of the same cycle matroid.
In a seminal paper~\cite{Whitney33}, Whitney proved that the converse also holds.
\begin{theorem}[Whitney \cite{Whitney33}]\label{intro:whitney}
Two graphs represent the same cycle matroid if and only if they are related by a sequence of Whitney-flips.
\end{theorem}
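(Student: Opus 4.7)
The ``if'' direction is immediate, as each of the three operations comprising a Whitney-flip preserves the cycle space. For the converse, the plan is induction on $|E(G)|$, stratified by graph connectivity. Let $G_1$ and $G_2$ satisfy $\cycle(G_1)=\cycle(G_2)$. Since the connected components of a cycle matroid correspond to the blocks of the graph, $G_1$ and $G_2$ must have blocks supported on the same edge sets. Applying Whitney-flips of the third and second types (which rearrange blocks across components and at cut vertices) reduces the problem to the case where both graphs are 2-connected.

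Assume now $G_1$ is 2-connected and admits a 2-separation $(X,\bar X)$ with $\zB_{G_1}(X)=\{u_1,u_2\}$. This is also a 2-separation of the matroid, so $G_2$ admits the same edge partition. I would add an auxiliary edge $e$ joining $u_1$ to $u_2$ on each side, producing two smaller 2-connected graphs out of each of $G_1$ and $G_2$. Matching pieces across $G_1$ and $G_2$ have the same cycle matroid, so by induction each pair is related by Whitney-flips. Reassembling these, and possibly performing one Whitney-flip of the first type on $X$ to align the identification of the two boundary vertices, carries $G_1$ to $G_2$ by a sequence of Whitney-flips.

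The remaining case is the 3-connected one, and this is where I expect the main obstacle. The goal is to show that a 3-connected graph is determined up to isomorphism by its cycle matroid (so no Whitney-flip is needed). My plan is to identify vertex stars internally to the matroid as the \emph{non-separating} cocircuits, namely cocircuits $D$ such that $\cycle(G_1)\setminus D$ is connected as a matroid. For a 3-connected $G_1$, each star $\delta(v)$ has this property since $G_1-v$ is 2-connected. Conversely, any bond $D$ partitions $V(G_1)$ into two connected sides $A$ and $B$; the matroid of the deletion is the direct sum of the cycle matroids of the two sides, and this direct sum is connected only if one side carries no edges, which forces $|A|=1$ and so $D=\delta(v)$. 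Hence the family of vertex stars is a matroid invariant, and from the vertex stars one recovers the edge--vertex incidence structure, giving $G_2\cong G_1$.

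The delicate step will be this characterization of vertex stars as non-separating cocircuits, together with the associated handling of loops and parallel edges in the deletion. The other piece of non-trivial bookkeeping, in the 2-separation step, is verifying that Whitney-flips obtained inductively on the two sides compose into a legitimate flip sequence for the whole graph, with the correct orientation at the boundary $\{u_1,u_2\}$; here the single extra flip of the first type on $X$ is precisely what reconciles the two possible ways the sides can be glued back together.
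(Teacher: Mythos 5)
The paper does not prove this statement: Theorem~\ref{intro:whitney} is quoted as a classical result with a citation to Whitney's 1933 paper, so there is no in-paper argument to compare yours against. Judged on its own, your sketch follows the standard modern route to Whitney's $2$-isomorphism theorem (essentially the one in Oxley's book): reduce via the block decomposition and via $2$-separations (the $2$-sum decomposition, realized by adding an auxiliary edge across the boundary pair) to the $3$-connected case, and there identify vertex stars intrinsically as the non-separating cocircuits, i.e.\ the cocircuits $D$ with $\cycle(G)\setminus D$ connected --- the Tutte/Bixby--Cunningham characterization --- from which the incidence structure and hence the graph is recovered. The outline is sound and the two steps you single out as delicate are indeed where the real work lies. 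Two points deserve more care than your sketch gives them. First, in the $2$-separation step you pass from a $2$-separation $(X,\bar X)$ of $G_1$ to "the same edge partition" being a $2$-separation of $G_2$; a $2$-separation of the matroid only yields $|\zB_{G_2}(X)|=2$ after one arranges that both $G_2[X]$ and $G_2[\bar X]$ are connected, and one must also dispose separately of the degenerate matroid $2$-separations coming from parallel classes and of loops (which sit in no cocircuit at all and can land anywhere). Second, when you lift the inductively obtained flip sequences from the augmented pieces $G_i[X]+e$ back to $G_i$, a flip whose flipped set contains the auxiliary edge $e$ must be reinterpreted as a flip of the complementary set together with $\bar X$; this is routine but is exactly the bookkeeping that makes the composition legitimate. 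Neither issue is a fatal gap, but both must be addressed for a complete proof.
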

In light of Theorem~\ref{intro:whitney}, we define two graphs to be {\em equivalent} if one can be obtained from the other by a sequence of Whitney-flips.

Given a set of vertices $U$, we denote by $\delta_G(U)$ the {\em cut} induced by $U$,
that is $\delta_G(U):=\{(u,v) \in E(G): u \in U, v \not \in U\}$.
We denote by $\cut(G)$ the set of all cuts of $G$. 
Since the cuts of $G$ correspond to the cycles of the {\em cut matroid} of $G$, 
we identify $\cut(G)$ with that matroid and say that $G$ is a {\em representation} of that matroid.
Cut matroids are also referred to as co-graphic matroids, as they are duals of graphic matroids.
Theorem~\ref{intro:whitney} may be restated as follows.
\begin{theorem}[Whitney \cite{Whitney33}]\label{intro:whitneyCut}
Two graphs represent the same cut matroid if and only if they are related by Whitney-flips.
\end{theorem}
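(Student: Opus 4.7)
The plan is to derive Theorem~\ref{intro:whitneyCut} as a direct consequence of Theorem~\ref{intro:whitney} by invoking matroid duality. The first step is to recall the standard fact that for any graph $G$, the cut matroid $\cut(G)$ is precisely the dual of the cycle matroid $\cycle(G)$. This follows because the cuts of $G$ are exactly the cocircuits (and symmetric differences of cocircuits) of $\cycle(G)$, which in matroid-theoretic terms means that $\cut(G)$ is the bond matroid $\cycle(G)^*$.

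The second step is to use the elementary fact that two matroids on a common ground set are equal if and only if their duals are equal. Applied to two graphs $G$ and $G'$ on the same edge set $E$, this gives
\[
\cut(G) = \cut(G') \iff \cycle(G)^* = \cycle(G')^* \iff \cycle(G) = \cycle(G').
\]
At this point Theorem~\ref{intro:whitney} can be applied directly: the rightmost condition is equivalent to $G$ and $G'$ being related by a sequence of Whitney-flips. Chaining the equivalences yields precisely the statement of Theorem~\ref{intro:whitneyCut}.

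There is essentially no serious obstacle here, since this is a corollary rather than an independent theorem; the only point that requires a brief verification is that the combinatorial definition of $\cut(G)$ given in the paper (symmetric differences of sets of the form $\delta_G(U)$) coincides with the cocycle space of $\cycle(G)$, which is a standard fact about graphic matroids. One should also note in passing that Whitney-flips, as defined, act on the vertex set while preserving the edge set, so the matroids being compared are legitimately on the same ground set throughout the argument.
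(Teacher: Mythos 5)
Your proposal is correct and matches the paper's (implicit) argument: the paper simply presents Theorem~\ref{intro:whitneyCut} as a restatement of Theorem~\ref{intro:whitney}, which rests on exactly the duality $\cut(G)=\cycle(G)^*$ and the fact that matroids on the same ground set are equal if and only if their duals are. Your added remarks about the cocycle space and the preservation of the edge set under Whitney-flips are the right points to verify and raise no issues.
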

Theorem~\ref{intro:whitney} and Theorem~\ref{intro:whitneyCut} provide solutions to the problem of determining when two graphs represent the 
same graphic or co-graphic matroid. We refer to this problem as the {\em Isomorphism Problem}.
In this paper we study the Isomorphism Problem for the class of even cycle matroids.

A {\em signed graph} is a pair $(G,\Sigma)$ where $G$ is a graph and $\Sigma\subseteq E(G)$.
We call $\Sigma$ a {\em signature} of $G$.
A subset $D\subseteq E(G)$ is {\em $\Sigma$-even} if $|D\cap\Sigma|$ is even (and {\em $\Sigma$-odd} otherwise). 
When there is no ambiguity we omit the prefix 
$\Sigma$ when referring to $\Sigma$-even and $\Sigma$-odd sets. 
Given a signed graph $(G,\Sigma)$, we denote by $\ecycle(G,\Sigma)$ the set of all even cycles of $(G,\Sigma)$.
It can be verified that $\ecycle(G,\Sigma)$ is the set of cycles of a matroid which we call the {\em even cycle matroid}.
We identify $\ecycle(G,\Sigma)$ with that matroid and say that $(G,\Sigma)$ is a 
{\em representation} of that matroid.
Note that, if $\Sigma$ is empty, all the cycles of $(G,\Sigma)$ are even, hence $\ecycle(G,\Sigma)$ is a cycle matroid.
Hence the class of even cycle matroids contains the class of cycle matroids.

\noindent
\textbf{Isomorphism Problem for even cycles:} What is the relation between two representations of the same even cycle matroid?

The Isomorphism Problem has been solved for even cycle matroids which are graphic,
by Shih (in his doctoral disseration, see \cite{Shih82}) and independently by 
Gerards, Lov\'asz, Schrijver, Seymour, Truemper (see \cite{Gerards00}).
We report the second result here, while Shih's result, which describes the structure of the graphs more precisely, 
is presented in Section~\ref{sec:statements}.
\begin{theorem}
Let $(G,\Sigma)$ and $(G',\Sigma')$ be signed graphs.
Suppose that $\ecycle(G,\Sigma)=\ecycle(G',\Sigma')$ and that this matroid is a cycle matroid.
Then $(G,\Sigma)$ and $(G',\Sigma')$ are related by 
a sequence of Whitney-flips, signature exchanges, and Lov\'asz-flips.
\end{theorem}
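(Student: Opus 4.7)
The plan is to leverage Whitney's theorem by first classifying all signed graphs whose even cycle matroid is graphic. Since $\ecycle(G,\Sigma)=\ecycle(G',\Sigma')$ is a cycle matroid, there is a graph $H$ with $\cycle(H)=\ecycle(G,\Sigma)$, and by Theorem~\ref{intro:whitney} this $H$ is unique up to Whitney-flips. The goal is then to show that every signed graph representing $\cycle(H)$ can be reduced to $(H,\emptyset)$ via Whitney-flips, signature exchanges, and Lov\'asz-flips; transitivity finishes the argument by routing both $(G,\Sigma)$ and $(G',\Sigma')$ through the common canonical form $(H,\emptyset)$.

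The first step would be a structural dichotomy for an arbitrary signed graph $(G,\Sigma)$ with $\ecycle(G,\Sigma)=\cycle(H)$. If every circuit of $G$ is $\Sigma$-even, then $\ecycle(G,\Sigma)=\cycle(G)$, so Whitney's theorem gives $G \sim H$ up to Whitney-flips, while $\Sigma$ is necessarily a cut $\delta_G(U)$ and is eliminated by a single signature exchange. If on the contrary some circuit of $G$ is $\Sigma$-odd, then $\ecycle(G,\Sigma)$ is a proper submatroid of $\cycle(G)$, so $G$ must have strictly more circuits than $H$; comparing ranks and bases and tracking which circuits of $G$ fail to be even should constrain the location of the discrepancy to a small, explicitly describable local structure in $(G,\Sigma)$.

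The second step would identify this local structure with the data of a Lov\'asz-flip, guided by Shih's more refined structural theorem from Section~\ref{sec:statements}. The expectation is that after one Lov\'asz-flip (composed with some signature exchanges and Whitney-flips), every circuit of the resulting signed graph is even, reducing to the first case. Iterating this reduction brings any representation back to $(H,\emptyset)$; the same procedure applied to $(G',\Sigma')$ yields the desired chain of operations between $(G,\Sigma)$ and $(G',\Sigma')$.

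The main obstacle is the completeness of step two. There is no a priori reason to expect that a single named operation beyond Whitney-flips and signature exchanges suffices to account for all discrepancies between signed graph representations of the same graphic matroid; the content of the theorem is precisely this completeness. Establishing it requires showing that whenever $\ecycle(G,\Sigma)\subsetneq \cycle(G)$ with $\ecycle(G,\Sigma)$ graphic, the failure of $\Sigma$-evenness is localized in exactly the pattern the Lov\'asz-flip is designed to undo. This is a nontrivial matroid-theoretic argument about how parity interacts with graphic structure, and it is where the real work of the proof is concentrated.
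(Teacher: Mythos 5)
First, note that the paper does not prove this statement at all: it is quoted as a known result, due to Shih \cite{Shih82} and independently to Gerards, Lov\'asz, Schrijver, Seymour and Truemper \cite{Gerards00}, so there is no in-paper argument for your sketch to be measured against. Judged on its own, your proposal is an outline whose decisive step is missing. The reduction, via transitivity, to comparing an arbitrary representation $(G,\Sigma)$ with the canonical one $(H,\emptyset)$ is legitimate, and the bipartite case is handled correctly. But the heuristic driving step two --- that when $(G,\Sigma)$ is non-bipartite the ``failure of $\Sigma$-evenness is localized'' and is undone by ``one Lov\'asz-flip'' --- is contradicted by the very structures the theorem must accommodate. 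Theorem~\ref{iso:shih}, outcomes (2) and (3), shows that two graphs carrying the same matroid in this way can differ by a \emph{global} re-gluing of arbitrarily many pieces $G_1,\dots,G_k$ along a few hub vertices. A Lov\'asz-flip fixes the edge set, requires a blocking pair, and only relocates the endpoints of the odd edges between the two vertices of that pair; a single such flip (even padded with Whitney-flips and resignings) cannot in general convert $G$ into a graph equivalent to $H$. What is actually required is an interleaved sequence of all three operations, and proving that such a sequence always exists is precisely the content of the theorem --- the part you explicitly defer.

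There is a second unaddressed obstruction: to apply a Lov\'asz-flip at all, the current representation must (after resigning and Whitney-flips) exhibit a blocking pair. Remark~\ref{intro:projection} gives only the easy direction (a blocking vertex forces graphicness); the converse --- that a signed graph representing a graphic even cycle matroid admits, up to equivalence, a blocking pair --- is not established anywhere in your sketch, and in Shih's outcome (2) the natural signature $\delta_{G'}(w_1)$ sits on a blocking \emph{triple} $\{x,y,z\}$ of $H'$ rather than a pair, so even the applicability of your one intended move is in doubt. In short, the proposal correctly identifies where the difficulty lies but supplies no mechanism for overcoming it; the ``small local discrepancy'' premise on which the whole plan rests is false in general.
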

\noindent
We need to define the terms ``signature exchange" and  ``Lov\'asz-flip".
Given a signed graph  $(G,\Sigma)$, we say that $\Sigma'$
is obtained from $\Sigma$ by a {\em signature exchange}
if $\Sigma \tr \Sigma'$ is a cut of $G$ (where $\tr$ denotes symmetric difference).
Every set $\Sigma'$ which may be obtained from $\Sigma$ by a signature exchange is a {\em signature} of $(G,\Sigma)$.
It is easy to show that $\ecycle(G,\Sigma)=\ecycle(G,\Sigma')$ if and only if $\Sigma'$ is a signature of $(G,\Sigma)$.

Given a graph $G$ we denote by $\loops(G)$ the set of all loops of $G$.
Let $(G,\Sigma)$ be a signed graph.
A vertex $s$ is a {\em blocking vertex} of $(G,\Sigma)$ if every odd circuit of $(G,\Sigma)$ either is a loop or uses $s$.
A pair of vertices $s,t$ is a {\em blocking pair} if every odd circuit of $(G,\Sigma)$ is either a loop or  
uses at least one of $s,t$.
Note that $s$ is a blocking vertex (respectively $s,t$ is a blocking pair) of $(G,\Sigma)$ if and only if 
there exists a signature $\Sigma'$ of $(G,\Sigma)$ such that $\Sigma'\subseteq\delta(s) \cup \loops(G)$ 
(respectively $\Sigma'\subseteq\delta(s)\cup\delta(t)\cup \loops(G)$).

Consider a signed graph $(G,\Sigma)$ and vertices $v_1,v_2 \in V(G)$, where
$\Sigma\subseteq\delta_G(v_1)\cup\delta_G(v_2)\cup \loops(G)$.
So $v_1,v_2$ is a blocking pair of $(G,\Sigma)$.
We can construct a signed graph $(G',\Sigma)$ from $(G,\Sigma)$ by 
replacing endpoints the $x,y$ of every odd edge $e$ with new endpoints $x',y'$ as follows:
\begin{enumerate}[\;\;\;(a)]
	\item if $x=v_1$ and $y=v_2$ then $x'=y'$ (i.e. $e$ becomes a loop);
	\item if $x=y$ (i.e. $e$ is a loop), then $x'=v_1$ and $y'=v_2$;
	\item if $x=v_1$ and $y\neq v_1,v_2$, then $x'=v_2$ and $y'=y$;
	\item if $x=v_2$ and $y\neq v_1,v_2$, then $x'=v_1$ and $y'=y$.
\end{enumerate}
Then we say that $(G',\Sigma)$ is obtained from $(G,\Sigma)$ by a {\em Lov\'asz-flip} on $v_1,v_2$.
It is easy to show that Lov\'asz-flips preserve even cycles.

Suppose that $(G_1,\Sigma_1)$ and $(G_2, \Sigma_2)$ are signed graphs where
$G_1$ and $G_2$ are equivalent and $\Sigma_2$ is obtained from $\Sigma_1$ by a signature exchange.
Then we say that $(G_1,\Sigma_1)$ and $(G_2, \Sigma_2)$ are {\em equivalent} signed graphs.
It is easy to see that, if $G_1$ and $G_2$ are equivalent graphs and $\ecycle(G_1,\Sigma_1)=\ecycle(G_2,\Sigma_2)$
for some signatures $\Sigma_1$ and $\Sigma_2$, then $(G_1,\Sigma_1)$ and $(G_2,\Sigma_2)$ are equivalent.
Thus the Isomorphism Problem is easily solved for signed graphs having equivalent underlying graphs. 
Therefore we focus on the Isomorphism Problem for the case that the two graphs are inequivalent.
We say that two graphs $G_1$ and $G_2$ are {\em siblings} if $G_1$ and $G_2$ are inequivalent and, for some signatures $\Sigma_1$ and $\Sigma_2$,
we have $\ecycle(G_1,\Sigma_1)=\ecycle(G_2,\Sigma_2)$.
We extend this terminology to the signed graphs and say that $(G_1,\Sigma_1)$ and $(G_2,\Sigma_2)$ are siblings.
We call the pair $\Sigma_1,\Sigma_2$ the {\em matching signature pair} for $G_1$, $G_2$.
In~\cite{GPW241} we proved that for any pair of siblings the matching signature pair is unique up
to signature exchange.

In Section~\ref{sec:preliminaries} we define another class of binary matroids,
the class of even cut matroids, which is a generalization of the class of co-graphic matroids. 
A result proved in~\cite{GPW241} shows the relation between the Isomorphism Problem for even cycle and even cut matroids.
We report such result in Section~\ref{sec:preliminaries} (which also contains preliminary results about even cycle matroids).

We thus focus on the Isomorphism Problem for even cycles: in Section~\ref{sec:statements} 
we present two classes of siblings and we characterize all the operations relating two siblings in the same class,
thus solving the Isomorphism Problem for these classes.
We conjecture that, up to Whitney-flips, signature exchanges, Lov\'asz-flips and some reductions, every pair of
siblings is either contained in one of these two classes, or is a modification of an operation
for graphic matroids, or forms a sporadic example.

Section~\ref{sec:whitney} contains results about Whitney-flips in graphs, which are used in Sections~\ref{sec:proofs1} and~\ref{sec:proofs2}.
The last two sections contain the proofs of the results stated in Section~\ref{sec:statements}.
%
% end of file

\section{Preliminaries} \label{sec:preliminaries}
%==============================================================
In this section we present some basic properties of even cycle matroids.
In particular, we specify what the bases and co-cycles are and present some simple results about connectivity. 
We introduce the class of even cut matroids and show the relation between
pairs of representations of even cycle matroids and pairs of representations of even cut matroids.
Finally, we introduce an operation which relates representations of some matroids which are both
even cycle matroids and duals of even cut matroids.
%------------------------------------------------------------------------------------------------
\subsection{Bases and co-cycles}
%------------------------------------------------------------------------------------------------
Consider a signed graph $(G,\Sigma)$.
A set $F \subseteq E(G)$ is dependent in $\ecycle(G,\Sigma)$ if and only if it contains an even cycle. 
As we consider graphs up to equivalence and identifying two vertices in distinct components of a graph is a Whitney-flip,
we may assume without loss of generality that $G$ is connected.
If $(G,\Sigma)$ does not contain any odd cycles, then $\ecycle(G,\Sigma)=\cycle(G)$ and a basis for $\ecycle(G,\Sigma)$ 
is just formed by a spanning tree of $G$.
If $(G,\Sigma)$ contains at least one odd cycle, every basis for $\ecycle(G,\Sigma)$ 
is formed by a spanning tree $B$ together with an edge $f \in \bar{B}$ forming an odd cycle with edges in $B$.

The co-cycles of $\ecycle(G,\Sigma)$ are the subsets of $E(G)$ which intersect every even cycle with even parity.
Hence we have the following.
\begin{RE}\label{co-cycleEcycle}
The co-cycles of $\ecycle(G,\Sigma)$ are the cuts of $G$ and the signatures of $(G,\Sigma)$.
\end{RE}
%
%------------------------------------------------------------------------------------------------
\subsection{Connectivity}\label{sec:connectivity}
%------------------------------------------------------------------------------------------------
Let $M$ be a matroid with rank function $r$.
Given $X\subseteq E(M)$ we define $\lambda_M(X)$, the {\em connectivity function} of $M$,
to be equal to $r(X)+r(\bar{X})-r(E(M))+1$. 
The set $X$ is a {\em $k$-separation} of $M$ if $\min\{|X|,|\bar{X}|\}\geq k$ and $\lambda_M(X)=k$.
$M$ is $k$-connected if it has no $r$-separations for any $r<k$.
Let $G$ be a graph and let $X\subseteq E(G)$.
The set $X$ is a {\em $k$-separation} of $G$ if $\min\{|X|,|\bar{X}|\}\geq k$, $|\zB_G(X)|=k$ and both $G[X]$ and $G[\bar{X}]$ are connected.
Note that with this definition two parallel edges of $G$ form a $2$-separation of $G$.
A graph $G$ is $k$-connected if it has no $r$-separations for any $r<k$.
We relate graph connectivity with connectivity for even cycle matroids.
Recall that we denote by $\loops(G)$ the set of loops of $G$.
A signed graph $(G,\Sigma)$ is {\em bipartite} if $G$ has no $\Sigma$-odd cycle.
Equivalently, $(G,\Sigma)$ is bipartite if $\Sigma$ is a cut of $G$.
We will make repeated use of the following result.
\begin{PR}\label{connectivity:n3c}
Suppose that $\ecycle(G,\Sigma)$ is $3$-connected. Then:
\begin{enumerate}[\;\;\;(1)]
	\item $|\loops(G)|\leq 1$ and if $e\in\loops(G)$ then $e\in\Sigma$;
	\item $G\setminus\loops(G)$ is $2$-connected;
	\item if $G$ has a $2$-separation $X$, then $(G[X],\Sigma\cap X)$ and $(G[\bar{X}],\Sigma\cap\bar{X})$ are both non-bipartite.
\end{enumerate}
\end{PR}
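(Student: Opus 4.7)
The plan is to prove each of (1)--(3) by contraposition: assume the stated conclusion fails and construct a $k$-separation of $\ecycle(G,\Sigma)$ with $k\le 2$, contradicting $3$-connectedness. I would assume throughout that $G$ is connected, since identifying two vertices from distinct components is a Whitney-flip and hence does not alter $\ecycle(G,\Sigma)$. The main computational input is the standard rank formula on a connected signed graph,
\[
r(\ecycle(G,\Sigma)) \;=\; \begin{cases} |V(G)|-1 & \text{if }(G,\Sigma)\text{ is bipartite,}\\ |V(G)| & \text{if }(G,\Sigma)\text{ is non-bipartite,}\end{cases}
\]
since a basis is a spanning tree in the bipartite case and a spanning tree together with an edge closing an odd fundamental circuit otherwise.

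Part (1) follows from two direct observations. If $e$ is an even loop, then $\{e\}$ is an even cycle, so $e$ is a matroid loop and $\lambda_M(\{e\})=1$. If $e,f$ are two odd loops, then $\{e,f\}$ is a $2$-element even cycle, so $e,f$ are parallel in the matroid and $\lambda_M(\{e\})=2$. Either situation contradicts $3$-connectedness, yielding $|\loops(G)|\le 1$ and forcing any loop to lie in $\Sigma$.

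For (2) and (3) I would apply the rank formula to both sides of a decomposition $E(G)=X\cup\bar X$ with $|\zB_G(X)|=k$ and $G[X], G[\bar X]$ connected. Combined with $|V(G)|=|V_G(X)|+|V_G(\bar X)|-k$, and with the observation that $(G,\Sigma)$ is bipartite if and only if both $(G[X],\Sigma\cap X)$ and $(G[\bar X],\Sigma\cap\bar X)$ are (restriction gives one direction; gluing the shores of the two pieces across the boundary vertices into a shore of $G$ gives the other), a short case split on bipartiteness yields
\[
\lambda_M(X) \;=\; \begin{cases} k+1 & \text{if both pieces are non-bipartite,}\\ k & \text{otherwise.}\end{cases}
\]
For (2), a cut vertex of $G\setminus\loops(G)$ provides such a decomposition with $k=1$ (placing the loop, if any, with its incident vertex); since a single non-loop-edge side is automatically bipartite, the analysis always gives $\lambda_M(X)\le 2$, producing a matroid $1$- or $2$-separation. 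For (3), $k=2$ holds by hypothesis, and if either piece is bipartite then $\lambda_M(X)=2$ with $|X|,|\bar X|\ge 2$, again a matroid $2$-separation.

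The main obstacle is nothing deep, only the bookkeeping behind the bipartiteness equivalence: in the $k=2$ setting a shore $U\subseteq V(G)$ witnessing $\Sigma=\delta_G(U)$ must be assembled consistently from shores of $G[X]$ and $G[\bar X]$ at both boundary vertices, and the degenerate small-side configurations for (2) (in particular when the small side consists of a single loop) must be dispatched by direct inspection. Once these points are handled, all three conclusions follow from the single rank computation above.
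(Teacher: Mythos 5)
Your overall strategy coincides with the paper's: part (1) via the loop/parallel-element observations, parts (2) and (3) via the rank formula for the even cycle matroid of a connected signed graph and the resulting value of $\lambda_M$ on a graph $k$-separation (this is exactly the content of the paper's Lemma~\ref{prelim:conn}). The one genuine problem is the claimed equivalence ``$(G,\Sigma)$ is bipartite if and only if both $(G[X],\Sigma\cap X)$ and $(G[\bar{X}],\Sigma\cap\bar{X})$ are.'' The ``only if'' direction is fine, but the converse is false for $k=2$: take $G$ to be a circuit of length four with $\Sigma$ a single edge, and let $X$ and $\bar{X}$ be the two paths of length two joining the two boundary vertices. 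Both pieces are forests, hence trivially bipartite, yet $(G,\Sigma)$ has an odd circuit. The gluing of shores that you describe genuinely fails here --- the shores of the two pieces cannot be made to agree at both boundary vertices --- so the ``bookkeeping'' you defer is not bookkeeping but a counterexample. Consequently your displayed formula for $\lambda_M(X)$ is wrong in the sub-case where both pieces are bipartite but $(G,\Sigma)$ is not: there $r(E(M))=|V(G)|$ rather than $|V(G)|-1$, and $\lambda_M(X)=k-1$ rather than $k$.

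Fortunately this does not sink the argument, because all three conclusions only need an upper bound. Writing $\lambda_M(X)=k+i+j-\epsilon$, where $i,j,\epsilon\in\{0,1\}$ indicate non-bipartiteness of the two pieces and of the whole, the trivial direction of your equivalence gives $\epsilon\geq\max(i,j)$, hence $\lambda_M(X)\leq k+\min(i,j)$; this equals $k+1$ when both pieces are non-bipartite and is at most $k$ otherwise, which is all that (2) and (3) use. You should either argue with this inequality, or do what the paper does: first dispose of the case where $(G,\Sigma)$ is bipartite (then $M=\cycle(G)$ is graphic and $G$ itself is $3$-connected, so there is nothing left to prove), and in the remaining case $\epsilon=1$ unconditionally, giving the clean identity $\lambda_M(X)=k+i+j-1$ with no gluing of shores required. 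With that repair, and with the small-side degeneracies in (2) dispatched as you indicate (a one-element side of a $1$-separation is a bridge or a loop, the former being a co-loop of $M$), the proof is correct and essentially the paper's.
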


To prove Proposition~\ref{connectivity:n3c}, we require a definition and a preliminary result.
Let $(G,\Sigma)$ be a signed graph and $X\subseteq E(G)$. 
We say that $X$ is a {\em $k$-$(i,j)$-separation} of $(G,\Sigma)$, where $i,j\in\{0,1\}$,~if the following hold:
\begin{enumerate}[\;\;\;(a)]
	\item $X$ is a $k$-separation of $G$;
	\item $i=0$ when $(G[X],\Sigma\cap X)$ is bipartite and $i=1$ otherwise;
	\item $j=0$ when $(G[\bar{X}],\Sigma\cap\bar{X})$ is bipartite and $j=1$ otherwise.
\end{enumerate}
\begin{LE} \label{prelim:conn}
Let $(G,\Sigma)$ be a non-bipartite signed graph and $M_S:= \ecycle(G,\Sigma)$.
For every $k$-$(i,j)$-separation $X$ of $(G,\Sigma)$, we have $\lambda_{M_S}(X)=k+i+j-1$.
\end{LE}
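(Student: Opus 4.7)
The plan is to reduce the claim to a straightforward rank computation in $M_S$, using the basis description from the preceding subsection. First I would record the underlying rank formula: for a connected signed graph $(H,\Lambda)$, a basis of $\ecycle(H,\Lambda)$ is a spanning tree of $H$ when $(H,\Lambda)$ is bipartite, and a spanning tree together with one extra edge closing an odd cycle when $(H,\Lambda)$ is non-bipartite. Extending by additivity over components, the rank of $\ecycle(H,\Lambda)$ equals $|V(H)|-b(H,\Lambda)$, where $b(H,\Lambda)$ is the number of components of $(H,\Lambda)$ that carry no odd circuit. The formula already accommodates loops: a loop in $\Lambda$ is an odd circuit on a one-vertex non-bipartite component, whereas a loop outside $\Lambda$ is a dependent even circuit on a one-vertex bipartite component.

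Next, I would apply this formula to $G[X]$ and $G[\bar X]$. Both are connected by the definition of a $k$-separation of $G$, and by the definition of a $k$-$(i,j)$-separation the first is bipartite iff $i=0$ and the second iff $j=0$. Therefore
\[
r_{M_S}(X)=|V_G(X)|-1+i,\qquad r_{M_S}(\bar X)=|V_G(\bar X)|-1+j.
\]
Since $(G,\Sigma)$ is non-bipartite and $G$ is connected (its two sides share the $k\ge 1$ vertices of $\zB_G(X)$), we also have $r_{M_S}(E(G))=|V(G)|$. Finally, the definition of the boundary gives
\[
|V_G(X)|+|V_G(\bar X)|=|V(G)|+|\zB_G(X)|=|V(G)|+k.
\]

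Substituting these three identities into $\lambda_{M_S}(X)=r_{M_S}(X)+r_{M_S}(\bar X)-r_{M_S}(E(G))+1$ immediately collapses to $\lambda_{M_S}(X)=k+i+j-1$, as required. There is no substantial obstacle: the only delicate point is the rank formula for $\ecycle$, and specifically the loop bookkeeping, which is dispatched in a single remark; everything else is arithmetic.
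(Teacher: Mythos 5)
Your proof is correct and follows essentially the same route as the paper's: compute the ranks of $X$, $\bar X$, and $E(G)$ in $M_S$ from the spanning-tree-plus-odd-edge description of bases and substitute into the connectivity function. The only cosmetic difference is that you express the ranks directly via vertex counts and the identity $|V_G(X)|+|V_G(\bar X)|=|V(G)|+k$, whereas the paper writes $r_S=r+i$ relative to the rank function $r$ of $\cycle(G)$ and invokes $\lambda_{\cycle(G)}(X)=k$; these are the same computation.
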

\begin{proof}
Let $r$ be the rank function of $M:=\cycle(G)$ and $r_S$ be the rank function of $M_S$.
As $(G,\Sigma)$ is non-bipartite, a basis for $M_S$ consists of a spanning tree $B$ of $G$ 
plus an edge $e \in \bar{B}$ that forms a $\Sigma$-odd circuit with elements in $B$.
Hence $r_S(M_S)=r(M)+1$.
Similarly, if $(G[X],\Sigma\cap X)$ (respectively $(G[\bar{X}],\Sigma\cap\bar{X})$) is non-bipartite, then 
the rank of $X$ (respectively $\bar{X}$) in $M_S$ is one more that in $M$, 
otherwise the rank of $X$ (respectively $\bar{X}$) is the same in both matroids.
Thus $r_S(X)=r(X)+i$ and $r_S(\bar{X})=r(\bar{X})+j$.
Hence
\begin{align}
\lambda_{M_S}(X) &
= r_S(X)+r_S(\bar{X})-r_S(M_S)+1\nonumber\\
& = r(X)+i + r(\bar{X})+j -r(M)-1+1\nonumber\\
& = \lambda_M(X) +i +j -1\nonumber\\
& = k +i +j -1\nonumber
\end{align}
\end{proof}
\begin{proof}[\textbf{Proof of Proposition~\ref{connectivity:n3c}}]
Let $M:=\ecycle(G,\Sigma)$.
As $M$ is $3$-connected, it has no loops, no co-loops and no parallel elements.
We may assume that $(G,\Sigma)$ is non-bipartite, for otherwise $M=\cycle(G)$ and $G$ is $3$-connected.
{\bf (1)}
Let $e$ be a loop of $G$. Then $e\in\Sigma$ for otherwise $e$ would be a loop of $M$.
There do not exist distinct loops $e,f$ of $G$, for otherwise $\{e,f\}$ would be a circuit of $M$ and $e,f$ would be in parallel in $M$.
{\bf (2)}
Suppose that $X$ is a $1$-$(i,j)$-separation of $(G,\Sigma)$.
By Lemma~\ref{prelim:conn}, $\lambda_{M}(X)=1+i+j-1 \leq 2$. 
As $M$ is $3$-connected, $X$ is not a $2$-separation; hence either $|X|=1$ or $|\bar{X}|=1$.
The single element in $X$ (or $\bar{X}$) is not a bridge of $G$, for otherwise it is a co-loop of $M$.
Hence $X$ or $\bar{X}$ is a loop of $G$.
{\bf (3)}
Suppose that $X$ is a $2$-$(i,j)$-separation of $(G,\Sigma)$.
As $M$ is $3$-connected, $\lambda_{M}(X) \geq 3$.
By Lemma~\ref{prelim:conn}, $2+i+j-1 \geq 3$, hence $i=j=1$.
\end{proof}

%------------------------------------------------------------------------------------------------
\subsection{Even cut matroids}
%------------------------------------------------------------------------------------------------
A {\em graft} is a pair $(G,T)$ where $G$ is a graph, $T \subseteq V(G)$ and $|T|$ is even. 
The vertices in $T$ are the {\em terminals} of the graft.
A cut $\delta(U)$ is {\em $T$-even} (respectively {\em $T$-odd}) 
if $|T \cap U|$ is even (respectively odd). 
When there is no ambiguity we omit the prefix $T$
when referring to $T$-even and $T$-odd cuts.
We denote by $\ecut(G,T)$ the set of all even cuts of $(G,T)$.
It can be verified that $\ecut(G,T)$ is the set of cycles of a binary matroid, which we call the {\em even cut matroid} represented by $(G,T)$.
We identify $\ecut(G,T)$ with that matroid and say that $(G,T)$ is a {\em representation} of that matroid.
Note that, if $T$ is empty, all the cuts of $(G,T)$ are even, hence $\ecut(G,T)$ is a cut matroid.

\noindent
\textbf{Isomorphism Problem for even cuts:} What is the relation between two representations of the same even cut matroid?

There is a close relation between the Isomorphism Problem for even cycle matroids and 
the Isomorphism Problem for even cut matroids, 
as the results of \cite{GPW241} discussed in the next section indicate.

Given a graph $H$, we denote by $\odd(H)$ the set of vertices of $H$ of odd degree.
Given a graft $(G,T)$ we say that $J \subseteq E(G)$ is a {\em $T$-join} of $G$ if $T=\odd(G[J])$.
Note that, if $J$ is a $T$-join of $G$, a cut $C$ of $G$ is $T$-even if and only if $|C \cap J|$ is even.
We say that two grafts $(G_1,T_1)$ and $(G_2,T_2)$ are {\em equivalent}  
if $G_1$ and $G_2$ are equivalent and a $T_1$-join of $G_1$ is a $T_2$-join of $G_2$.
Given equivalent graphs $G_1$ and $G_2$, it is easy to see that $\ecut(G_1,T_1)=\ecut(G_2,T_2)$, 
for two sets of terminals $T_1$ for $G_1$ and $T_2$ for $G_2$,
if and only if $(G_1,T_1)$ and $(G_2,T_2)$ are equivalent.
\begin{RE}\label{co-cycleEcut}
The co-cycles of $\ecut(G,T)$ are the cycles of $G$ and the $T$-joins of $(G,T)$.
\end{RE}
%
%------------------------------------------------------------------------------------------------
\subsection{Pairing Isomorphism Problems}\label{sec:241}
%------------------------------------------------------------------------------------------------
Recall that two graphs are equivalent if they have the same cycles.
We will make repeated use of the following result of~\cite{GPW241}.
\begin{theorem}\label{isopair-co}
Let $G_1$ and $G_2$ be inequivalent graphs. 
\begin{enumerate}[\;\;\;(1)]
	\item
		Suppose there exists a pair $\Sigma_1,\Sigma_2\subseteq E(G_1)$ such that $\ecycle(G_1,\Sigma_1)=\ecycle(G_2,\Sigma_2)$.
		For $i=1,2$, if $(G_i,\Sigma_i)$ is bipartite define $C_i:=\emptyset$; otherwise let $C_i$ be a $\Sigma_i$-odd cycle of $G_i$.
		Let $T_i:=\odd(G_i[C_{3-i}])$.
		Then $\ecut(G_1,T_1)=\ecut(G_2,T_2)$.
	\item
		Suppose there exists a pair $T_1\subseteq V(G_1)$ and $T_2\subseteq V(G_2)$ (where $|T_1|,|T_2|$ are even) such that $\ecut(G_1,T_1)$ $=$
		$\ecut(G_2,T_2)$.
		For $i=1,2$, if $T_i=\emptyset$ let $\Sigma_{3-i} = \emptyset$; otherwise let $t_i \in T_i$ and $\Sigma_{3-i}:=\delta_{G_i}(t_i)$.
		Then $\ecycle$ $(G_1,\Sigma_1)$ $=$ $\ecycle(G_2,\Sigma_2)$.
\end{enumerate}
Moreover, if they exist, the pairs $\Sigma_1$, $\Sigma_2$ and $T_1$, $T_2$ are unique.
\end{theorem}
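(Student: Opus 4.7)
My approach is to exploit the co-cycle descriptions in Remarks~\ref{co-cycleEcycle} and~\ref{co-cycleEcut}: the co-cycles of $\ecycle(G,\Sigma)$ split into the cuts of $G$ and the signatures of $(G,\Sigma)$, while the co-cycles of $\ecut(G,T)$ split into the cycles of $G$ and the $T$-joins of $(G,T)$. In each direction of the theorem I will use a parity test against a carefully chosen witness edge set to separate these two classes, and then express the target matroid uniformly in terms of the co-cycles of the matroid shared by the two representations.

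For part~(1), set $M:=\ecycle(G_1,\Sigma_1)=\ecycle(G_2,\Sigma_2)$. A $\Sigma_i$-odd cycle $C_i$ of $G_i$ meets every cut of $G_i$ in an even number of edges and every signature of $(G_i,\Sigma_i)$ in an odd number, which yields
\[
\cut(G_i)=\{D\text{ co-cycle of }M:|D\cap C_i|\text{ is even}\}.
\]
By construction $C_{3-i}$ is a $T_i$-join of $G_i$, so the $T_i$-even cuts of $G_i$ are exactly the cuts of $G_i$ meeting $C_{3-i}$ evenly; combining these,
\[
\ecut(G_i,T_i)=\{D\text{ co-cycle of }M:|D\cap C_1|\text{ and }|D\cap C_2|\text{ are both even}\},
\]
which is symmetric in $i$. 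The convention $C_i=\emptyset$ when $(G_i,\Sigma_i)$ is bipartite just trivializes one parity test; both $(G_i,\Sigma_i)$ cannot be bipartite simultaneously, since that would force $\cycle(G_1)=\cycle(G_2)$ and contradict the inequivalence of $G_1,G_2$ via Theorem~\ref{intro:whitney}.

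Part~(2) is the formal dual. For $M':=\ecut(G_1,T_1)=\ecut(G_2,T_2)$ and any $t_i\in T_i$, the star $\delta_{G_i}(t_i)$ meets every cycle of $G_i$ evenly but every $T_i$-join in an odd number of edges (namely the degree of $t_i$ in the join), so the parity test against $\delta_{G_i}(t_i)$ isolates $\cycle(G_i)$ inside the co-cycles of $M'$. Setting $\Sigma_{3-i}:=\delta_{G_i}(t_i)$ produces the analogous symmetric description of $\ecycle(G_i,\Sigma_i)$, so $\ecycle(G_1,\Sigma_1)=\ecycle(G_2,\Sigma_2)$; Theorem~\ref{intro:whitneyCut} rules out $T_1=T_2=\emptyset$. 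For uniqueness of $T_i$: two $\Sigma_{3-i}$-odd cycles of $G_{3-i}$ differ by a $\Sigma_{3-i}$-even cycle, which is a cycle of $M$ and hence of $G_i$, and adding a cycle of $G_i$ to $C_{3-i}$ does not change $\odd(G_i[C_{3-i}])$; uniqueness of the $\Sigma_i$ up to signature exchange follows by the analogous argument and matches the statement already cited from~\cite{GPW241}. The main obstacle I anticipate is verifying that the two co-cycle classes really are disjoint in the non-bipartite case, i.e.\ that $\Sigma_i\notin\cut(G_i)$; this is precisely the definition of non-bipartiteness, so in the end it is a quick check rather than a genuine difficulty.
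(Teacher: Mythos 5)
First, a point of reference: the paper contains no proof of Theorem~\ref{isopair-co} --- it is imported verbatim from~\cite{GPW241} --- so there is no internal argument to compare yours against. Judged on its own, your treatment of parts (1) and (2) is correct and is the natural argument. Splitting the co-cycles via Remarks~\ref{co-cycleEcycle} and~\ref{co-cycleEcut} and separating the two classes by parity tests --- against an odd cycle $C_i$, which meets every cut of $G_i$ evenly and every signature of $(G_i,\Sigma_i)$ oddly, and against a star $\delta_{G_i}(t_i)$, which meets every cycle of $G_i$ evenly and every $T_i$-join oddly --- does yield the symmetric description
\[
\ecut(G_i,T_i)=\bigl\{D\ \text{co-cycle of}\ M:\ |D\cap C_1|\ \text{and}\ |D\cap C_2|\ \text{both even}\bigr\},
\]
and dually for $\ecycle(G_i,\Sigma_i)$. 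The degenerate cases $C_i=\emptyset$ and $T_i=\emptyset$ are handled correctly, and invoking Theorems~\ref{intro:whitney} and~\ref{intro:whitneyCut} to exclude the doubly degenerate case is exactly right.

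The genuine gap is the closing uniqueness assertion. What you actually prove is that $T_i=\odd(G_i[C_{3-i}])$ does not depend on the choice of the odd cycle $C_{3-i}$, and that varying $t_i\in T_i$ changes $\Sigma_{3-i}$ by a cut; these are well-definedness statements about your own construction. The theorem claims more: for inequivalent $G_1,G_2$ there is at most one pair $T_1,T_2$ with $\ecut(G_1,T_1)=\ecut(G_2,T_2)$, and at most one pair $\Sigma_1,\Sigma_2$ (necessarily only up to signature exchange) with $\ecycle(G_1,\Sigma_1)=\ecycle(G_2,\Sigma_2)$ --- quantified over \emph{all} candidate pairs, not merely over the choices made inside the construction. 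Your sentence deferring this to ``the statement already cited from~\cite{GPW241}'' is circular, since that citation \emph{is} the theorem being proved. A proof is available within your framework but must be written: for instance, if $T_1\neq\emptyset$ then $\ecut(G_1,T_1)$ has codimension $1$ in $\cut(G_1)$ and is contained in $\cut(G_1)\cap\cut(G_2)$, which is a proper subspace of $\cut(G_1)$ by inequivalence and Theorem~\ref{intro:whitneyCut}; hence $\ecut(G_1,T_1)=\cut(G_1)\cap\cut(G_2)$ is determined by $G_1,G_2$ alone, and one then checks that the subspace of $T_1$-even cuts determines $T_1$ (test against the cuts $\delta_{G_1}(\{v\})$). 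The analogous argument for the signature pair also needs to be supplied.
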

Theorem~\ref{isopair-co} implies that, if $G_1$ and $G_2$ are siblings, then there exist $T_1$ and $T_2$
such that $\ecut(G_1,T_1)=\ecut(G_2,T_2)$.
In this case we also say that $(G_1,T_1)$ and $(G_2,T_2)$ are siblings
and that $T_1,T_2$ is the {\em matching terminal pair} for $G_1,G_2$.
%---------------------------------------------------------------------------------------------------
\subsection{Folding and unfolding}
%---------------------------------------------------------------------------------------------------
In this section we define an operation that relates signed graphs with blocking pairs to grafts with four terminals.
For our purpose the position of the loops is immaterial. 
Thus we will assume that all loops form distinct components of the graph.

Consider a graph $H$ with a vertex $v$ and $\alpha\subseteq \delta_H(v) \cup \loops(H)$.
We say that $G$ is obtained from $H$ by 
{\em splitting $v$} into $v_1,v_2$ according to $\alpha$ if 
$V(G)=V(H)-\{v\}\cup\{v_1,v_2\}$ and for every $e=(u,w) \in E(H)$:
\begin{enumerate}[\;\;\;(a)]
	\item if $e \not \in \alpha \cup \delta_H(v)$, then $e=(u,w)$ in $G$;
	\item if $e \in \loops(H) \cap \alpha$, then $e=(v_1,v_2)$ in $G$;
	\item if $e \in \delta_H(v) \cap \alpha$ and $w=v$ then $e=(u,v_1)$ in $G$;
	\item if $e \in \delta_H(v) - \alpha$ and $w=v$ then $e=(u,v_2)$ in $G$.
\end{enumerate}

Consider a signed graph $(H,\Gamma)$ where $\Gamma\subseteq\delta_H(s)\cup\delta_H(t)\cup \loops(H)$ 
for two distinct vertices $s$ and $t$ of $H$. 
Choose $\alpha,\beta \subseteq E(H)$, where $\alpha \Delta \beta = \Gamma$, 
$\alpha \subseteq \delta(s) \cup \loops(H)$,
$\beta \subseteq \delta(t) \cup \loops(H)$, and $\alpha \cap \beta \cap \loops(H) = \emptyset$.
Construct a graft $(G,T)$ as follows:
\begin{enumerate}[\;\;\;(a)]
\item split $s$ into $s_1,s_2$ according to $\alpha$;
\item split $t$ into $t_1,t_2$ according to $\beta$;
\item set $T=\{s_1,s_2,t_1,t_2\}$.
\end{enumerate}
\noindent
Then the graft $(G,T)$ is obtained by {\em unfolding} $(H,\Gamma)$ according to vertices $s,t$ and signature $\Gamma$
(or according to vertices $s,t$ and $\alpha$,$\beta$).
Note that the resulting graft $(G,T)$ depends on the choice of $\alpha, \beta$, not only on $\Gamma$. 
Finally, we say that $(H,\Gamma)$ is obtained by {\em folding} the graft $(G,T)$ with the pairing $s_1,s_2$ and $t_1,t_2$.
We denote by $M^*$ the dual of a matroid $M$.
\begin{RE}\label{iso:unfold}
Let $(H,\Gamma)$ be a signed graph with $\Gamma\subseteq\delta(s)\cup\delta(t)\cup \loops(H)$ 
and let $(G,T)$ be a graft obtained by unfolding $(H,\Gamma)$ according to $s,t$ and $\Gamma$. Then:
\begin{enumerate}[\;\;\;(1)]
	\item a set of edges is an even cycle of $(H,\Gamma)$ if and only if it is a cycle or a $T$-join of $G$;
	\item $\ecycle(H,\Gamma)=\ecut(G,T)^*$.
\end{enumerate}
\end{RE}
\begin{proof}
Suppose we choose $\alpha$ and $\beta$ as in the definition of unfolding.
Suppose that $C$ is an even cycle of $(H,\Gamma)$.
For every $v\in V(H)-\{s,t\}$, $|\delta_H(v) \cap C|=|\delta_G(v) \cap C|$, which is even.
For $i=1,2$, define 
$d(s,i):=|C\cap\delta_G(s_i)|$ and $d(t,i):=|C\cap\delta_G(t_i)|$.
Since $C$ is a cycle, $d(s,1)$ and $d(s,2)$ have the same parity and so do $d(t,1)$ and $d(t,2)$.
Note that $\alpha=\delta_G(s_1)$, $\beta=\delta_G(t_1)$ and $\Gamma=\alpha \Delta \beta$.
Thus, as $|C\cap\Gamma|$ is even, $d(s,1)$ and $d(t,1)$ have the same parity.
Thus $d(s,1)$, $d(s,2)$, $d(t,1)$ and $d(t,2)$ are either all even or all odd.
In the former case $C$ is a cycle of $G$, in the later case it is a $T$-join of $G$.
The converse is similar. 
Finally, $(2)$ follows from $(1)$ and Remark~\ref{co-cycleEcut}.
\end{proof}
%
%end file
\section{Even cycle isomorphism}\label{sec:statements}
%===================================================================================================
In this section we provide a partial answer to the Isomorphism Problem for even cycle matroids.
First we present a result by Shih that solves the Isomorphism Problem for even cycle matroids which are graphic.
We show, as a direct consequence of the results in Section~\ref{sec:241}, that this also solves the Isomorphism Problem
for even cut matroids which are co-graphic.
In Sections~\ref{sec:Shih} and~\ref{sec:quad} we introduce two classes of even cycle siblings: Shih siblings and quad siblings. 
For each one of these classes we provide a list of operations and we show that any two siblings in the class are related
by Whitney-flips and exactly one of these operations, thus solving the Isomorphism Problem for these two classes.

We believe that solving the Isomorphism Problem for Shih siblings and quad siblings
is a relevant step toward the solution of the Isomorphism Problem for $3$-connected even cycle matroids.
Suppose $(G_1,\Sigma_1)$ and $(G_2,\Sigma_2)$ are siblings such that $\ecycle(G_1,\Sigma_1)$ is $3$-connected.
We conjecture that there exist signed graphs $(G_1',\Sigma_1')$ and $(G_2',\Sigma_2')$ such that, for $i=1,2$, 
$(G_i',\Sigma_i')$ is obtained from $(G_i,\Sigma_i)$ by a sequence of Whitney-flips, Lov\'asz-flips and signature exchanges
and one of the following occurs:
\begin{enumerate}[\;\;\;(1)]
	\item $(G_1',\Sigma_1')=(G_2',\Sigma_2')$;
	\item $(G_1',\Sigma_1')$ and $(G_2',\Sigma_2')$ are either Shih siblings or quad siblings;
	\item $(G_1',\Sigma_1')$ and $(G_2',\Sigma_2')$ may be reduced;
	\item $(G_1',\Sigma_1')$ and $(G_2',\Sigma_2')$ belong to a sporadic set of examples;
	\item $(G_1',\Sigma_1')$ and $(G_2',\Sigma_2')$ are obtain by a local modification of representations of a graphic matroid.
\end{enumerate}
The reductions in part (3) are similar to, and include, the reductions described in Sections~\ref{sec:Shih} and~\ref{sec:quad}.
The small set of examples in part (4) arises from a construction as the one in Figure~\ref{fig:badEx}.
\begin{figure}[ht]
\begin{center}
\includegraphics[width=15cm]{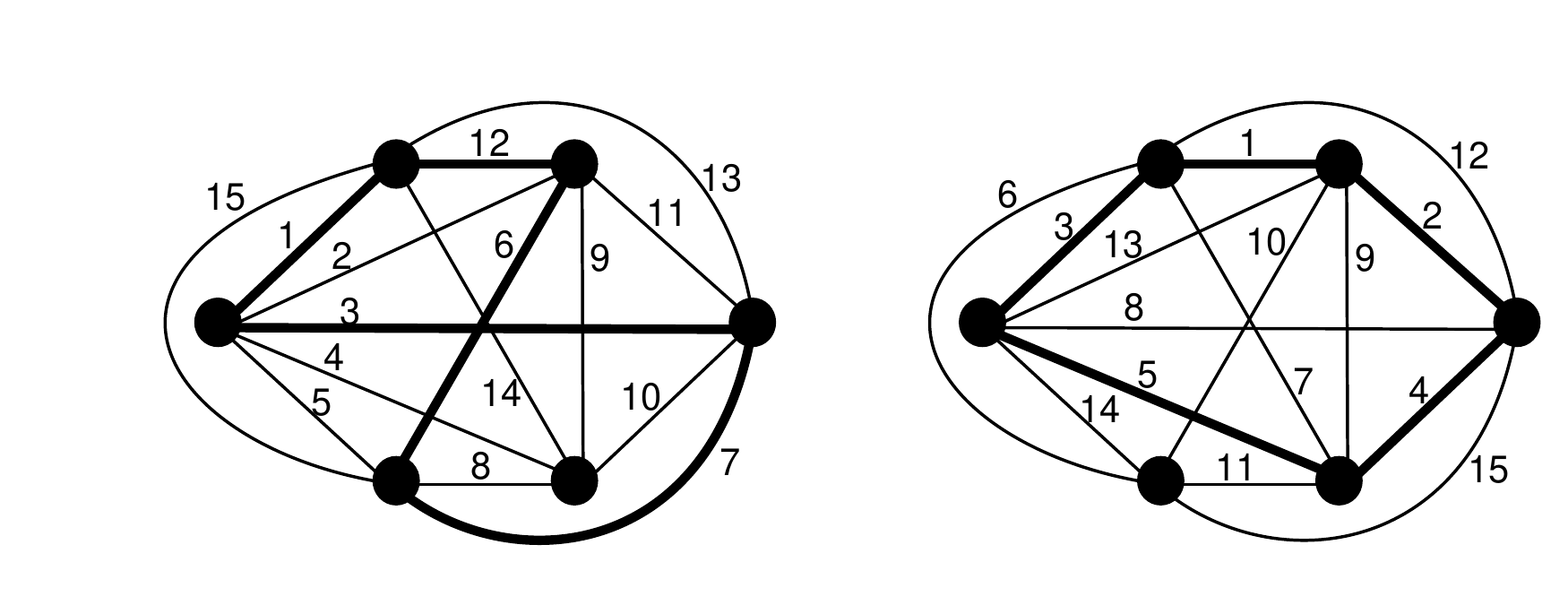}
\end{center}
\vspace{-0.3 in}\caption{Siblings. Bold edges are odd.}
\label{fig:badEx}
\end{figure}
Outcome (5) is constructed as follows. Let $G$ be a graph and $(H,\Gamma)$ be a signed graph such that $\cycle(G)=\ecycle(H,\Gamma)$. 
Suppose that $e,f,g$ are edges forming an odd triangle in $(H,\Gamma)$. 
Let $v_{ef}$ be the vertex in $H$ incident to $e$ and $f$; define $v_{fg}$ and $v_{eg}$ similarly.
Construct a graph $H'$ by adding a new vertex $v$ and three new edges $\bar{e}, \bar{f}, \bar{g}$ to $H$ as follows: 
$\{\bar{e}, \bar{f}, \bar{g}\}$ form a triad in $H'$ incident to the new vertex $v$.
The other end of $\bar{e}$ (respectively $\bar{f}, \bar{g}$) in $H'$ is $v_{fg}$ (respectively $v_{eg}, v_{ef}$).
Now construct a graph $G'$ from $G$ by adding edges $\bar{e}, \bar{f}, \bar{g}$,
where $\bar{e}$ is parallel to $e$, $\bar{f}$ is parallel to $f$ and $\bar{g}$ is parallel to $g$.
Then $\ecut(H',\{v,v_{ef},v_{eg},v_{fg}\})=\ecut(G',T')$, where $T':=\odd(G[\{e,f,g\}])$. Hence the graphs $G'$ and $H'$ are siblings.
An example of this construction is given in Figure~\ref{fig:extraEx}. This example arises from one of the constructions in Shih's theorem, which is discussed in the next section.
\begin{figure}[ht]
\begin{center}
\includegraphics[width=13cm]{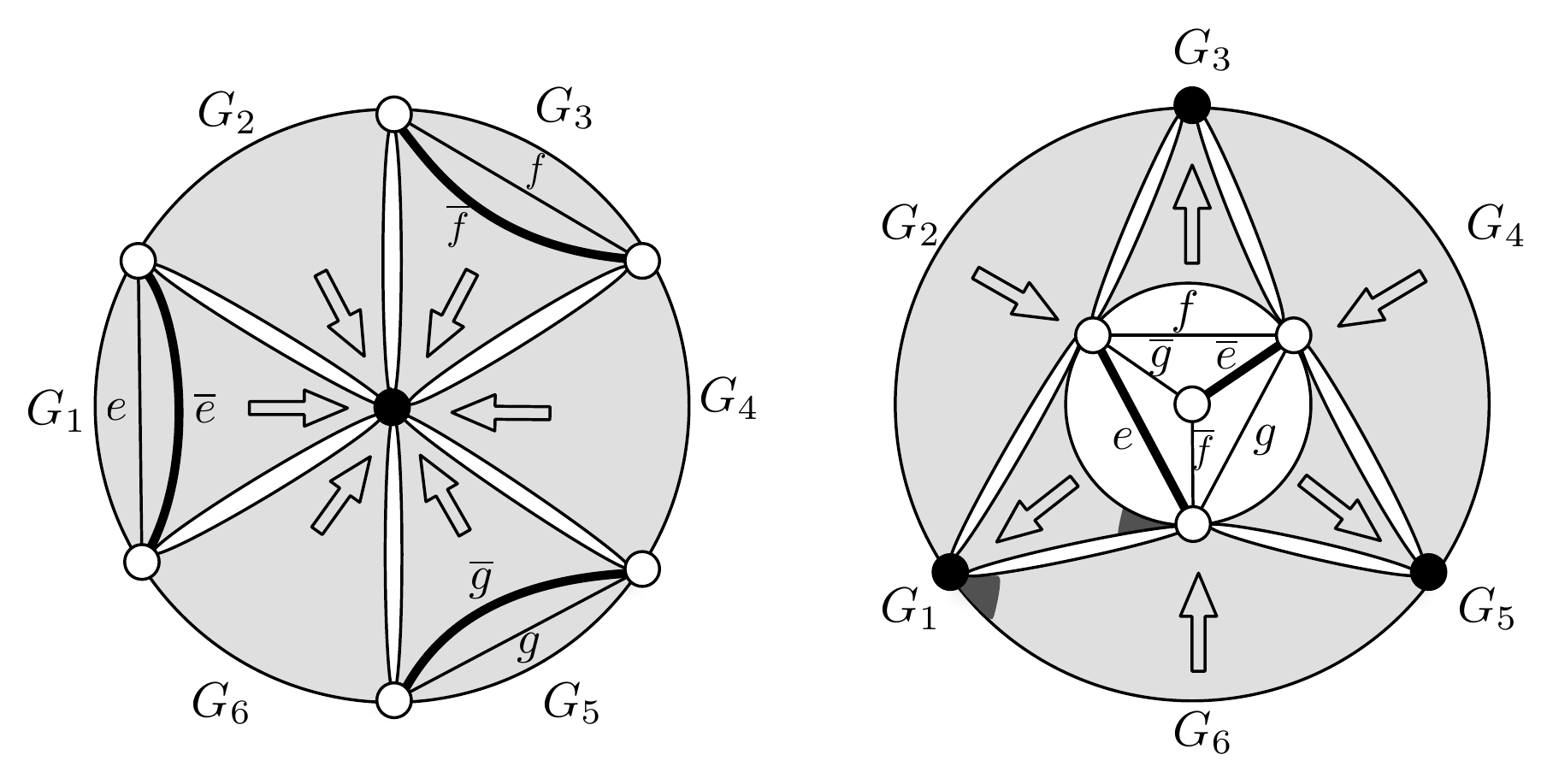}
\end{center}
\vspace{-0.3 in}\caption[Modification of Shih's operation.]{Modification of Shih's operation. Bold and shaded edges are odd, white vertices are terminals.}
\label{fig:extraEx}
\end{figure}
%
%
%------------------------------------------------------------------------------------------------
\subsection{The graphic and co-graphic case}
%------------------------------------------------------------------------------------------------
In this section we consider the Isomorphism Problem for even cycle matroids which are graphic.
Suppose that for a signed graph $(H,\Gamma)$, $\ecycle(H,\Gamma)$ is a graphic matroid.
Hence there exists a graph $G$ such that $\ecycle(H,\Gamma)=\cycle(G)$.
If $(H,\Gamma)$ does not contain any odd cycles, then $\cycle(H)=\cycle(G)$, the two graphs are equivalent and the Isomorphism Problem is solved.
Thus we assume that $(H,\Gamma)$ contains an odd cycle $C$.
Every odd cycle of $H$ can be generated by $C$ and a basis for the even cycles of $H$.
Thus $\cycle(G)$ is a subspace of $\cycle(H)$ and
$\textrm{dim}(\cycle(G))=\textrm{dim}(\cycle(H))-1$.
Moreover, if we know the structure of $G$ and $H$, then we can determine the signature $\Gamma$ by Theorem~\ref{isopair-co},
as the signature pair is unique in this case.
Therefore the following result (proved by Shih in his doctoral dissertation, see \cite{Shih82}) 
provides an answer to the Isomorphism Problem for graphic even cycle matroids.
\begin{theorem}\label{iso:shih}
Suppose $G$ and $H$ are graphs such that $\cycle(G)$ is a subspace of $\cycle(H)$ and
$\textrm{dim}(\cycle(G))=\textrm{dim}(\cycle(H))-1$.
Then there exist graphs $G'$ and $H'$, equivalent to $G$ and $H$ respectively, such that one of the following holds.
\begin{enumerate}[\;\;\;(1)]
	\item $H'$ is obtained from $G'$ by identifying two distinct vertices.
	\item There exist graphs $G_1,\ldots, G_4$ (not necessarily all non-empty) and distinct vertices $x_i,y_i,z_i \in V(G_i)$
		such that $G'$ is obtained by identifying $x_i,y_{3-i},z_{2+i}$ to a vertex $w_i$, 
		for $i=1,\ldots,4$ (where the indices are modulo $4$).
		Moreover, $H'$ is obtained by identifying $x_1,x_2,x_3,x_4$ to a vertex $x$,
		identifying $y_1,y_2,y_3,y_4$ to a vertex $y$ and identifying $z_1,z_2,z_3,z_4$ to a vertex $z$.
	\item There exist graphs $G_1,\ldots,G_k$, with $k \geq 3$, and distinct vertices $x_i,y_i,z_i \in V(G_i)$, for $i=1,\ldots,k$, 
		such that $G'$ is obtained by identifying $z_1,\ldots,z_k$ to a vertex $z$ and, for $i=1,\ldots,k$, identifying 
		$y_{i-1}$ and $x_i$ to a vertex $w_i$ (where the indices are modulo $k$).
		Moreover, $H'$ is obtained by identifying $y_{i-1}$, $z_i$, $x_{i+1}$ to a vertex $w_i'$, for $i=1,\ldots,k$ 
		(where the indeces are modulo $k$).
\end{enumerate}
\end{theorem}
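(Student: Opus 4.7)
The plan is to reformulate the hypothesis in signed graph language and classify the structure of a natural signature $\Gamma$ on $H$. After a preliminary reduction using Whitney flips, I would assume $G$ and $H$ are $2$-connected with $E(G) = E(H)$; the dimension hypothesis then forces $|V(G)| = |V(H)| + 1$, consistent with $H$ being obtained from $G$ by a single vertex identification. Because $\cycle(G) \subseteq \cycle(H)$ has codimension one, there exists a signature $\Gamma \subseteq E(H)$, unique up to signature exchange, such that $\cycle(G) = \ecycle(H, \Gamma)$. I would then proceed by analyzing the blocking structure of the signed graph $(H, \Gamma)$.

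Case~(1) would correspond to $(H, \Gamma)$ having a blocking vertex $s$. After a signature exchange we may take $\Gamma \subseteq \delta_H(s) \cup \loops(H)$. Splitting $s$ into $s_1, s_2$, placing the edges of $\Gamma$ at $s_1$ and the remaining edges of $\delta_H(s)$ at $s_2$, produces a graph $G''$ whose cycles are precisely the $\Gamma$-even cycles of $H$, so $\cycle(G'') = \cycle(G)$. By Theorem~\ref{intro:whitney}, $G$ is equivalent to $G''$; setting $G' := G''$ and $H' := H$ realizes the outcome of Case~(1).

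In the complementary situation, $(H, \Gamma)$ has no blocking vertex. Here the hypothesis that $\ecycle(H, \Gamma) = \cycle(G)$ is a graphic matroid, which is strictly stronger than just being an even cycle matroid, places strong restrictions on where odd edges can lie. Using Lemma~\ref{prelim:conn} to track how non-bipartite sides combine across $2$-separations of $H$, I would show that the odd circuits must be blocked by a triple of vertices $\{x, y, z\}$, and that after suitable Whitney flips $H$ decomposes into blocks $G_1, \ldots, G_k$ glued along a subset of $\{x, y, z\}$. A further combinatorial analysis of how these pieces can be reassembled to yield a graphic matroid $\cycle(G)$ should reduce to exactly two configurations: either every piece meets all three of $x, y, z$, forcing $k = 4$ and the symmetric identification of Case~(2), or the pieces are arranged cyclically around a common hub vertex, yielding the wheel-like Case~(3).

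The main obstacle will be this non-blocking-vertex case: proving that the graphic hypothesis on $\ecycle(H, \Gamma)$, combined with the absence of a blocking vertex, forces precisely the two configurations of Cases~(2) and~(3) and nothing else. This requires a delicate interplay between Whitney equivalence, a careful bookkeeping of odd edges across $2$- and $3$-separations of $H$, and the uniqueness (up to exchange) of $\Gamma$. Once the pieces $G_i$ and the distinguished vertices $x, y, z$ (or hub $z$ and glue vertices $w_i$) are exposed, verifying that the identifications in $G'$ and $H'$ match the explicit descriptions in Cases~(2) and~(3) should reduce to a direct check that the cycle spaces agree.
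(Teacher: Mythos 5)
The paper does not actually prove this statement: Theorem~\ref{iso:shih} is quoted from Shih's doctoral dissertation \cite{Shih82} (with an independent proof attributed to Gerards, Lov\'asz, Schrijver, Seymour and Truemper), so there is no in-paper argument to compare yours against. Judged on its own terms, your proposal has a correct and well-motivated setup but is not a proof. The reductions (connectivity, $E(G)=E(H)$, $|V(G)|=|V(H)|+1$), the existence and uniqueness up to signature exchange of $\Gamma$ with $\cycle(G)=\ecycle(H,\Gamma)$, and the disposal of outcome~(1) via a blocking vertex and Theorem~\ref{intro:whitney} are all sound; this much mirrors Remark~\ref{intro:projection}.

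The gap is the entire non-blocking-vertex case, which is the actual content of the theorem. You assert that the absence of a blocking vertex, together with $\ecycle(H,\Gamma)$ being graphic, forces the odd circuits to be covered by a triple $\{x,y,z\}$, and that ``a further combinatorial analysis should reduce to exactly two configurations,'' but no argument is given for either claim. Neither is routine: establishing that a graphic, non-blocking-vertex $(H,\Gamma)$ admits a blocking triple already requires a genuine structural argument, and passing from a blocking triple to the rigid identification patterns of outcomes~(2) and~(3) is where all the difficulty lies. Note also that your intermediate picture does not cleanly match outcome~(3): there the signature $\Gamma=\delta_{G'}(w_1)$ sits inside $\delta_{H'}(w'_1)\cup\delta_{H'}(w'_k)$, i.e.\ $(H',\Gamma)$ has a blocking \emph{pair} rather than a genuine triple, and the pieces are glued cyclically through $k$ hub vertices rather than through three fixed vertices; your claim that ``every piece meets all three of $x,y,z$ forces $k=4$'' is not an argument but a restatement of the desired conclusion. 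As written, the proposal is a plausible plan of attack whose central step --- precisely the step that distinguishes this theorem from the easy blocking-vertex observation --- is missing.
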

An example of outcome (2) is given in Figure~\ref{fig:Shih2}, where dotted lines represent vertices that are identified.
$G'$ is the graph on the left and $H'$ the graph on the right.
Let $P_1$ be a $(y,z)$-path in $G_1$ and $P_2$ be a $(y,z)$ path in $G_2$.
Then $P_1 \cup P_2$ is a cycle of $H'$ and not a cycle of $G'$.
Let $T:=\odd(G'[P_1 \cup P_2])=\{w_1,w_2,w_3,w_4\}$. By Theorem~\ref{isopair-co}, $\ecut(G',T)=\cut(H')$ and we may choose $\Gamma:=\delta_{G'}(w_1)$
(shaded in the figure). 
\begin{figure}[ht]
\begin{center}
\includegraphics[width=12cm]{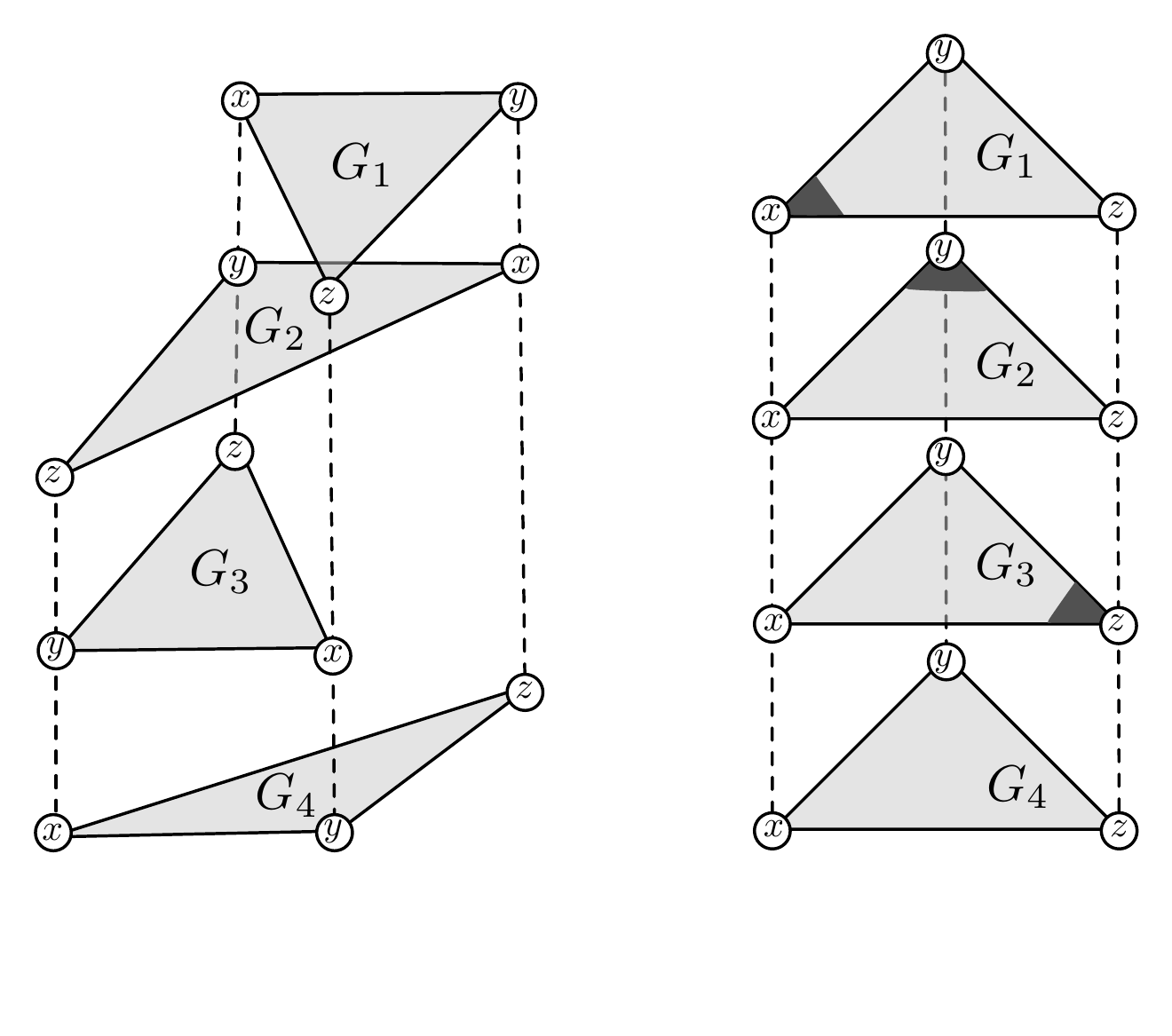}
\end{center}
\vspace{-0.6in}\caption{Shih operation 2.}
\label{fig:Shih2}
\end{figure}

An example of outcome (3) is given in Figure~\ref{fig:Shih3}, where the graph on the left is $G'$ and the one on the right is $H'$.
In this example we chose $G_1$ to be the graph with edges $1,2,3$ as in the figure. The arrows indicate how each piece is flipped.
We may choose $\Gamma:=\delta_{G'}(w_1)$ (shaded in the figure).
\begin{figure}[ht]
\begin{center}
\includegraphics[width=12cm]{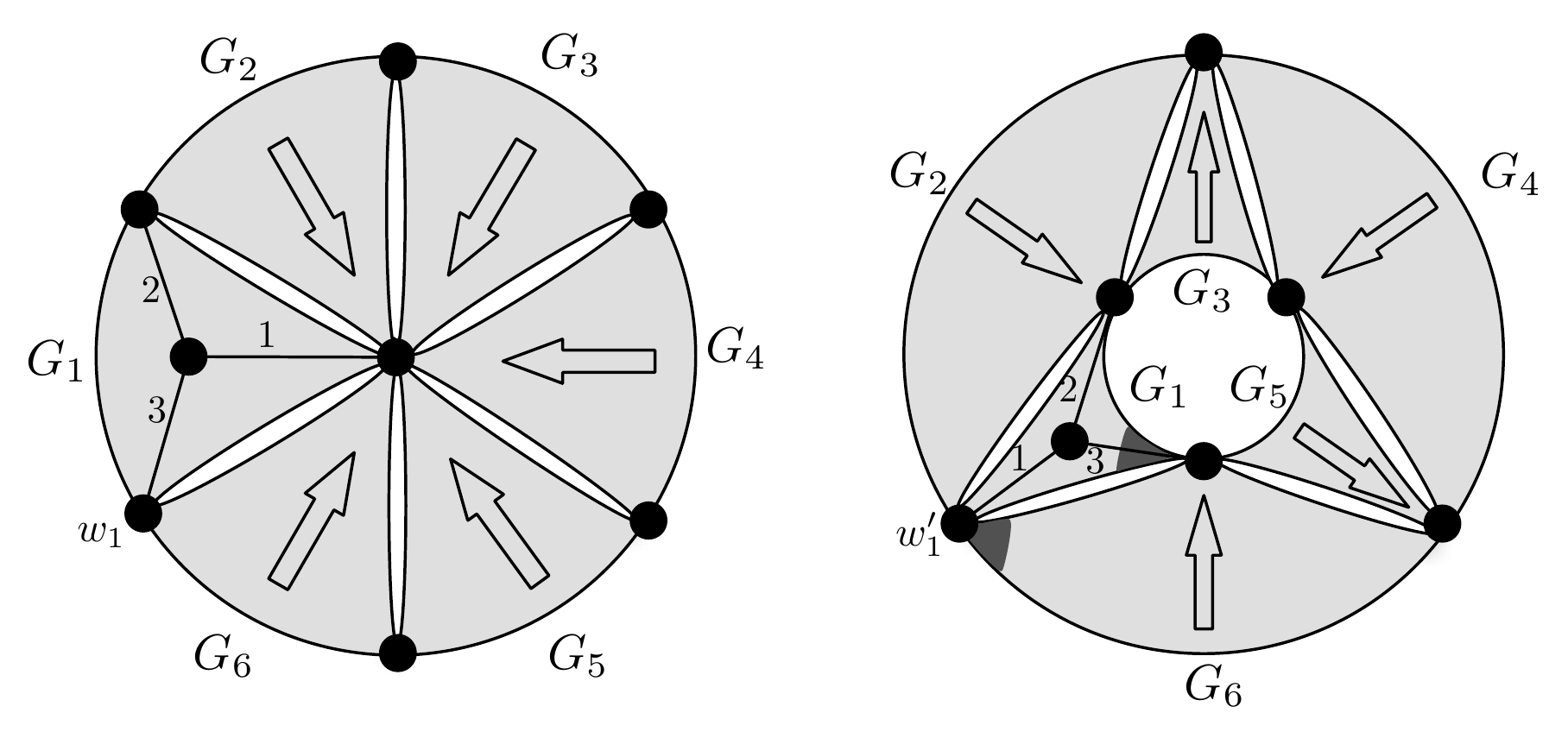}
\end{center}
\vspace{-0.3in}\caption{Shih operation 3.}
\label{fig:Shih3}
\end{figure}

Note that Theorem~\ref{iso:shih} also answers the Isomorphism Problem for even cut matroids in the case that
the even cut matroid represented by a graft $(G,T)$ is co-graphic.
In fact, by Theorem~\ref{isopair-co}, 
we have $\cycle(G)=\ecycle(H,\Gamma)$ if and only if $\ecut(G,T)=\cut(H)$, for some set of terminals $T$ of $G$.

As the Isomorphism Problem is solved for graphic matroids, we will mostly consider non-graphic matroids in this paper.
Let $(H,\Gamma)$ be a signed graph.
Suppose that $\Gamma\subseteq\delta_H(s) \cup \loops(H)$ for some vertex $s$ of $H$.
Let $G$ be obtained from $H$ by splitting $s$ according to $\Gamma$.
Then $\cycle(G)=\ecycle(H,\Gamma)$. Thus we have the following.
\begin{RE}\label{intro:projection}
Let $(H,\Gamma)$ be a signed graph. If $(H,\Gamma)$ has a blocking vertex, then $\ecycle(H,\Gamma)$ is a cycle matroid.
\end{RE}
%
%------------------------------------------------------------------------------------------------
\subsection{The class of Shih siblings}\label{sec:Shih}
%------------------------------------------------------------------------------------------------
Let signed graphs $(G_1,\Sigma_1)$ and $(G_2,\Sigma_2)$ be siblings and let  $T_1,T_2$ be the matching terminal pair.
If $|T_1|=2$ or $|T_2|=2$, we say that $(G_1,\Sigma_1)$ and $(G_2,\Sigma_2)$ are {\em Shih siblings}.

Suppose that $|T_2|=2$ and let $H_2$ be the graph obtained from $G_2$ by identifying the two vertices in $T_2$. 
Then $\ecut(G_2,T_2)=\cut(H_2)$.
It follows that $\ecut(G_1,T_1)=\cut(H_2)$. 
Therefore Theorem~\ref{iso:shih} gives a characterization of Shih siblings.
For example, the graphs $G_1$ and $H_2$ may be as in Figure~\ref{fig:Shih3} and we may obtain $G_2$ from the graph on the
right by splitting a vertex (for example, $w_1'$) into vertices $v^+$ and $v^-$. 
Then, up to resigning, $\Sigma_1=\delta_{G_2}(v^+)$ and $\Sigma_2$ is still $\delta_{G_1}(w_1)$.

Note that Theorem~\ref{iso:shih} completely characterizes the structure of $G_1$ and $H_2$ in cases (2) and (3)
and $G_2$ is obtained from $H_2$ by simply splitting any vertex.
Moreover, the matching signature pair is uniquely determined, by Theorem~\ref{isopair-co}.
However, if $|T_1|=|T_2|=2$, case (1) of the theorem occurs.
What Theorem~\ref{iso:shih} states in this case is that there exist equivalent graphs $H_1,H_2$ such that, for $i=1,2$, 
$H_i$ is obtained from $G_i$ by identifying two vertices.
Hence Theorem~\ref{iso:shih} does not characterize the structure of the graphs in this case.
Therefore we treat this type of siblings separately from the other Shih siblings and we provide an explicit characterization of them.
Let signed graphs $(G_1,\Sigma_1)$ and $(G_2,\Sigma_2)$ be siblings and let  $T_1,T_2$ be the matching terminal pair,
where $|T_1|=|T_2|=2$.
For $i=1,2$, let $H_i$ be obtained from $G_i$ by identifying the two vertices in $T_i$.
Then $\cut(H_1)=\ecut(G_1,T_1)=\ecut(G_2,T_2)=\cut(H_2)$. 
By Theorem~\ref{intro:whitneyCut}, $H_1$ and $H_2$ are equivalent. This justifies the following definition.

Consider a pair of equivalent graphs $H_1$ and $H_2$. 
Suppose that, for $i=1,2$, we have 
$\alpha_i\subseteq\delta_{H_i}(v_i)\cup\loops(H_i)$ for some $v_i\in V(H_i)$. 
Then, for $i=1,2$, let $G_i$ be obtained from $H_i$ by splitting $v_i$ into $v^-_i,v^+_i$ according to $\alpha_i$ and let $T_i:=\{v^-_i,v^+_i\}$.
Since $H_1$ and $H_2$ are equivalent, $\cut(H_1)=\cut(H_2)$. Thus 
\[
\ecut(G_1,T_1)=\cut(H_1)=\cut(H_2)=\ecut(G_2,T_2).
\]
In particular, if $G_1$ and $G_2$ are not equivalent, $(G_1,T_1)$ and $(G_2,T_2)$ are siblings. 
Let $\Sigma_1,\Sigma_2$ be the matching signature pair for $G_1,G_2$.
If $(G_1,\Sigma_1)$ and $(G_2,\Sigma_2)$ are inequivalent we say that the tuple
$
\dst=(H_1,v_1,\alpha_1,H_2,v_2,\alpha_2)
$
is a {\em split-template}
and that $(G_1,\Sigma_1)$ and $(G_2,\Sigma_2)$ (respectively $(G_1,T_1)$ and $(G_2,T_2)$) are {\em split siblings}
which {\em arise} from $\dst$. Split siblings are a special type of Shih siblings,
namely the type arising from outcome (1) in Theorem~\ref{iso:shih}.
An explicit characterization of split siblings representing a $3$-connected matroid is given in Section~\ref{sec:split}. 
%
%------------------------------------------------------------------------------------------------
\subsection{The class of quad siblings}\label{sec:quad}
%------------------------------------------------------------------------------------------------
Let $(H_1,\Gamma_1)$ and $(H_2,\Gamma_2)$ be a pair of equivalent signed graphs.
Suppose that, for $i=1,2$, $\Gamma_i\subseteq\delta_{H_i}(v_i)\cup\delta_{H_i}(w_i)\cup \loops(H_i)$,
for some $v_i,w_i \in V(H_i)$.
Then, for $i=1,2$, let $(G_i,T_i)$ be the graft obtained by unfolding $(H_i,\Gamma_i)$ according to $v_i,w_i$ and $\alpha_i$,$\beta_i$
(where $\Gamma_i=\alpha_i \Delta \beta_i$).
It follows from Remark~\ref{iso:unfold}(2) that
\[
\ecut(G_1,T_1)=
\ecycle(H_1,\Gamma_1)^*=
\ecycle(H_2,\Gamma_2)^*=
\ecut(G_2,T_2).
\]
In particular, if $G_1$ and $G_2$ are not equivalent, $(G_1,T_1)$ and $(G_2,T_2)$ are siblings.
Let $\Sigma_1,\Sigma_2$ be the matching signature pair for $G_1,G_2$.
If $G_1$ and $G_2$ are not equivalent, we say that the tuple
$\dst=(H_1,v_1,w_1,\alpha_1,\beta_1,H_2,v_2,w_2,\alpha_2,\beta_2)$
is a {\em quad-template} and that $(G_1,\Sigma_1)$ and $(G_2,\Sigma_2)$ 
(respectively $(G_1,T_1)$ and $(G_2,T_2)$) are {\em quad siblings}
which {\em arise} from $\dst$. An explicit characterization of quad siblings representing
a $3$-connected non-graphic matroid is given in Section~\ref{sec:doublesplit}. 
%
%------------------------------------------------------------------------------------------------
\subsection{Isomorphism for Shih siblings}\label{sec:split}
%------------------------------------------------------------------------------------------------
Let signed graphs $(G_1,\Sigma_1)$ and $(G_2,\Sigma_2)$ be Shih siblings and let  $T_1,T_2$ be the matching terminal pair.
Suppose $|T_2|=2$, and let $H_2$ be the graph obtained from $G_2$ by identifying the two vertices in $T_2$. 
Then $\ecut(G_1,T_1)=\ecut(G_2,T_2)=\cut(H_2)$ and some graphs $G_1'$ and $H_2'$, equivalent to $G_1$ and $H_2$ respectively, 
satisfy one of the outcomes of Theorem~\ref{iso:shih}. 
Outcomes (2) and (3) completely characterize the structure of $G_1$ and $H_2$.
The aim of this section is to provide a structural characterization of outcome (1).
Recall that if outcome (1) occurs, then $(G_1,\Sigma_1)$ and $(G_2,\Sigma_2)$ are split siblings.
The proof of the following result is given in Section~\ref{sec:proofs1}.
\begin{theorem}\label{split:char}
Let $M$ be a $3$-connected even cycle matroid.
If $(G_1,\Sigma_1)$ and $(G_2,\Sigma_2)$ are representations of $M$ 
which are split siblings, then they are either:
\begin{enumerate}[\;\;\;(1)]
\item simple siblings, or
\item nova siblings, or 
\item can be reduced.
\end{enumerate}
\end{theorem}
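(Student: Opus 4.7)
The plan is to reduce the problem to an analysis of how a Whitney-flip sequence between the equivalent graphs $H_1$ and $H_2$ interacts with the two split vertices $v_1\in V(H_1)$ and $v_2\in V(H_2)$, and then to carry out a structural case analysis constrained by the $3$-connectivity of $M$.

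First I would set up the reduction. The split-template $\dst=(H_1,v_1,\alpha_1,H_2,v_2,\alpha_2)$ produces $G_i$ by splitting $v_i$ according to $\alpha_i$ into $v_i^-,v_i^+$, with $T_i=\{v_i^-,v_i^+\}$. By construction $\cut(H_1)=\ecut(G_1,T_1)=\ecut(G_2,T_2)=\cut(H_2)$, so Theorem~\ref{intro:whitneyCut} supplies a sequence of Whitney-flips from $H_1$ to $H_2$. Using Theorem~\ref{isopair-co} we can pass freely between $\ecycle(G_i,\Sigma_i)$ and $\ecut(G_i,T_i)$. Since $M$ is $3$-connected, Proposition~\ref{connectivity:n3c} (applied to $M$ and dually, via the pairing, to $M^*$) strongly constrains $G_1$ and $G_2$: the underlying graphs are essentially $2$-connected after removing loops, and any $2$-separation has both sides ``nontrivial'' in the relevant signed/graft sense. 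These conditions pull back to conditions on $H_i$ and on the position of $v_i$ relative to any $2$-separation of $H_i$.

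Next I would track how $v_1$ evolves along the Whitney-flip sequence into $v_2$. Choose a minimum length such sequence; each individual flip acts on some $2$-separation $X_j$ of the intermediate graph. For each flip I would classify its interaction with the current image of $v_1$ into a small number of local types depending on (i) whether the image of $v_1$ coincides with a boundary vertex of $X_j$ and (ii) how $\alpha_1$ distributes across $X_j$ and $\bar{X}_j$. Minimality of the sequence, together with Proposition~\ref{connectivity:n3c}(3), forbids flips whose effect on $\ecut(G_i,T_i)$ is trivial, so only a constrained list of interaction patterns survives.

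The main case analysis would then compare the final position of $v_1$ in $H_2$ with the actual $v_2$, and the image of $\alpha_1$ with $\alpha_2$. I expect exactly three outcomes. When the images of $v_1$ and $\alpha_1$ can be placed in $H_2$ so that, after a Whitney-flip sequence on $G_1$, the splittings at $v_1$ and $v_2$ are ``locally the same,'' the siblings match the definition of \emph{simple siblings}. A specific rotational/cyclic configuration in which $v_1$ and $v_2$ sit at paired boundary vertices of a chain of $2$-separations, with the $\alpha_i$ distributed accordingly, yields the \emph{nova sibling} structure. In every remaining configuration, there is a $2$-separation of $G_1$ (equivalently of $G_2$) on which the signature can be made trivial on one side by signature exchange; this is precisely the kind of situation that admits a reduction in the sense of clause (3).

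The main obstacle is the bookkeeping of this case analysis: controlling the $2$-separations of $G_i$ simultaneously through the signed-graph side (Remark~\ref{co-cycleEcycle}, Proposition~\ref{connectivity:n3c}(3)) and the graft side (Remark~\ref{co-cycleEcut}, Theorem~\ref{isopair-co}) forces compatibility constraints between the $\alpha_i$ and the Whitney-flip decomposition that are rigid enough to leave only the simple and nova configurations when no reduction is available. Showing that the forbidden configurations are \emph{exactly} those that reduce, rather than a larger list of sporadic exceptions, will be the delicate part and likely the longest portion of the actual write-up in Section~\ref{sec:proofs1}.
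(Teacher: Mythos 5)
Your starting point (pass to $\cut(H_1)=\cut(H_2)$, take a Whitney-flip sequence from $H_1$ to $H_2$, and analyse how it interacts with $v_1$ and $v_2$) is the same as the paper's, but the plan has two genuine gaps. First, the logical shape of your trichotomy is inverted, and the catch-all case you propose cannot work. You claim that in every configuration that is not simple and not nova ``there is a $2$-separation of $G_1$ on which the signature can be made trivial on one side by signature exchange,'' and that this yields a reduction. Under the hypothesis that $M$ is $3$-connected, Proposition~\ref{connectivity:n3c}(3) says every $2$-separation of $G_1$ has \emph{both} sides non-bipartite, so no signature exchange can trivialize the signature on one side; moreover the reduction of Section~\ref{sec:split:reduction} is not triggered by such a separation but by a $2$-separation $X$ of $G_1$ with $\zB_{G_1}(X)=T_1$, equivalently by $H_1\setminus\loops(H_1)$ failing to be $2$-connected. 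In the paper that failure is the \emph{only} source of outcome (3); once $H_1$ and $H_2$ are $2$-connected up to loops, the correct statement is that one can \emph{always} pass to an equivalent pair that is simple or nova (Lemma~\ref{split:templatekey}), not that the leftover cases reduce.

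Second, the technical engine is missing. ``Choose a minimum length flip sequence and track the image of $v_1$'' does not by itself handle crossing $2$-separations, which is exactly the difficulty Section~\ref{sec:whitney} exists to resolve: one needs Proposition~\ref{conn:startbg} (proved via flowers and Lemma~\ref{conn:intoflowers}) to normalize the sequence into flips avoiding $v_1,v_2$ followed by a single non-crossing w-star centred at $v_1$ in $H_1$ and at $v_2$ in $H_2$. You also omit the freedom to replace $\alpha_i$ by $\alpha_i\tr D$ for a cut $D$ (the ``compatible template'' notion and Lemma~\ref{split:eqvtemplates}), which is what lets one minimize the w-star and absorb degenerate petals; and you give no mechanism for establishing condition (N2), which in the paper requires producing odd circuits through $v_1'$ and through $w$ inside each minimal piece and assembling them into handcuffs. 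Your description of the nova configuration as a ``rotational/cyclic chain'' also does not match the w-star structure actually required by the definition. As written, the proposal would not close.
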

\noindent
It remains to define the terms ``simple siblings", ``nova siblings", and ``reduced".
We need some preliminary definitions.

By a {\em sequence} $(X_1,\ldots,X_k)$ we mean a family of sets $\{X_1,\ldots,X_k\}$ where $X_i$ precedes $X_j$ when $i<j$.
We say that $\bS=(X_1,\ldots,X_k)$ is a {\em w-sequence} of $G$ if, for all $i\in[k]$, 
$X_i$ is a $2$-separation of the graph obtained from $G$ by performing Whitney-flips on $X_1,\ldots,X_{i-1}$ (in this order).
We denote by $\Wflip[G,\bS]$ the graph obtained from $G$ by performing Whitney-flips on $X_1,\ldots,X_k$ (in this order).
For our purpose the position of loops is irrelevant. 
Hence we will assume that loops form distinct components of the graph. 
Therefore, if $G$ and $G'$ are equivalent graphs that are $2$-connected, except for possible loops,
then $G'=\Wflip[G,\bS]$ for some w-sequence $\bS$ of $G$.

A family $\bS=\{X_1,\ldots,X_k\}$ of sets of edges of a graph $G$ is a {\em w-star} if
\begin{enumerate}[\;\;\;(a)]
	\item $X_i \cap X_j = \emptyset$, for all distinct $i, j\in[k]$;
	\item there exist distinct $z,v_1,\ldots,v_k\in V(G)$ such that $\zB_G(X_i)=\{z,v_i\}$, for all $i\in[k]$;
	\item no edge with ends $z,v_i$ is in $X_i$, for all $i\in[k]$.
\end{enumerate}
\noindent
Vertex $z$ is the {\em center} of the w-star $\bS$.

Consider a split-template $(H_1,v_1,\alpha_1,H_2,v_2,\alpha_2)$.
If $H_1$ and $H_2$ are $2$-connected, except for possible loops, we have that
$H_2=\Wflip[H_1,\bS]$ for some w-sequence $\bS$.
In this case we slightly abuse terminology and say that $(H_1,v_1$, $\alpha_1$, $H_2$, $v_2$, $\alpha_2,\bS)$ is a split-template.
This is only defined for the case where $H_1$ and $H_2$ are $2$-connected, up to loops.
Thus, when specifying a w-sequence $\bS$ for a slip-template, we will implicitly assume
that $H_1$ and $H_2$ are $2$-connected, except for possible loops.
\begin{RE}\label{iso:alphasign}
Let $\dst=(H_1,v_1,\alpha_1,H_2,v_2,\alpha_2)$ be a split-template and let $(G_1,\Sigma_1)$ and
$(G_2,\Sigma_2)$ be split siblings that arise from $\dst$. 
Then, up to signature exchange, we have $\Sigma_1=\Sigma_2=\alpha_1\tr\alpha_2$. 
\end{RE}
\begin{proof}
For $i=1,2$, vertex $v_i$ of $H_i$ gets split into vertices $v^-_i,v^+_i$ of $G_i$.
By construction, $\alpha_i=\delta_{G_i}(v^-_i)$, for $i=1,2$.
As $v^-_1\in T_1$, Theorem~\ref{isopair-co} implies that $\alpha_1$ is a signature of $(G_2,\Sigma_2)$. 
As $\alpha_2$ is a cut of $G_2$, $\alpha_1\tr\alpha_2$ is a signature of $(G_2,\Sigma_2)$.
By symmetry, $\alpha_1\tr\alpha_2$ is also a signature of $(G_1,\Sigma_1)$.
\end{proof}
%-------------------------------------------------
\subsubsection{Simple twins}
%-------------------------------------------------
Consider a split-template $\dst=(H_1,v_1,\alpha_1,H_2,v_2,\alpha_2,\bS)$.
If $\bS=\emptyset$, i.e. $H_1=H_2$, 
then  $\dst$ is {\em simple} and $(G_1,\Sigma_1)$ and $(G_2,\Sigma_2)$ 
arising from $\dst$ are {\em simple twins}.
By Remark~\ref{iso:alphasign}, we may assume that $\Sigma_1=\Sigma_2=\alpha_1\tr\alpha_2$. 
Suppose that vertex $v_1$ of $H_1$ gets split into vertices $v^-_1$ and $v^+_1$ of $G_1$.
Then $\alpha_1\subseteq\delta_{G_1}(v^-_1)$ and $\alpha_2\subseteq\delta_{G_1}(v_2)$.
Hence, $v^-_1$ and $v_2$ form a blocking pair of $(G_1,\Sigma_1)$. 
Thus we have the following.
\begin{RE}\label{split:simplebp}
Simple twins have blocking pairs.
\end{RE}
\noindent
It can easily be verified that two simple twins are related by Lov\'asz-flips.
%-------------------------------------------------
\subsubsection{Nova twins}
%-------------------------------------------------
Let $(G,\Sigma)$ be a signed graph with distinct vertices $s_1$ and $s_2$.
For $i=1,2$, let $C_i$ denote a circuit of $H_i$ using $s_i$ and avoiding $s_{3-i}$.
Suppose that $C_1$ and $C_2$ are either vertex disjoint or that $C_1$ and $C_2$ intersect exactly in a path.
In the former case let $P$ denote a path with ends $u_i\in V_G(C_i)-\{s_i\}$, for $i=1,2$, such that 
$V_G(P)\cap\bigl(V_G(C_1)\cup V_G(C_2)\bigr)=\{u_1,u_2\}$.
In the latter case define $P$ to be the empty set.
Then we say that the triple $(C_1,C_2,P)$ form {\em $\{s_1,s_2\}$-handcuffs}.
We say that $X\subseteq G$ is a {\em handcuff-separation} if
$X$ is a $2$-separation of $G$ and
there exist $\{s_1,s_2\}$-handcuffs of $(G[X],\Sigma\cap X)$, where $s_1,s_2$ are the vertices in $\zB_G(X)$.

A split-template $\dst=(H_1,v_1,\alpha_1,H_2,$ $v_2,\alpha_2,\bS)$ is {\em nova} if, for $i=1,2$:
\begin{enumerate}[\;\;\;(N1)]
	\item $\bS$ is a w-star of $H_i$ with center $v_i$, and
	\item all $X'\subseteq X\in \bS$ with $\zB_{H_i}(X')=\zB_{H_i}(X)$ are handcuff-separations of $(H_i,\alpha_1\tr\alpha_2)$.
\end{enumerate}
We say that $(G_1,\Sigma_1)$ and $(G_2,\Sigma_2)$ arising from $\dst$ are {\em nova twins}.
An example of nova twins is given in Figure~\ref{fig:nova}, where dotted lines denote vertices which are identified.
We could have defined nova twins omitting condition~(N2).
This would yield a weaker version of Theorem~\ref{split:char}.
However, the stronger version is needed for an upcoming paper on stabilizer theorems for even cycle matroids.
\begin{figure}[h]
\begin{center}
\includegraphics[width=13.5cm]{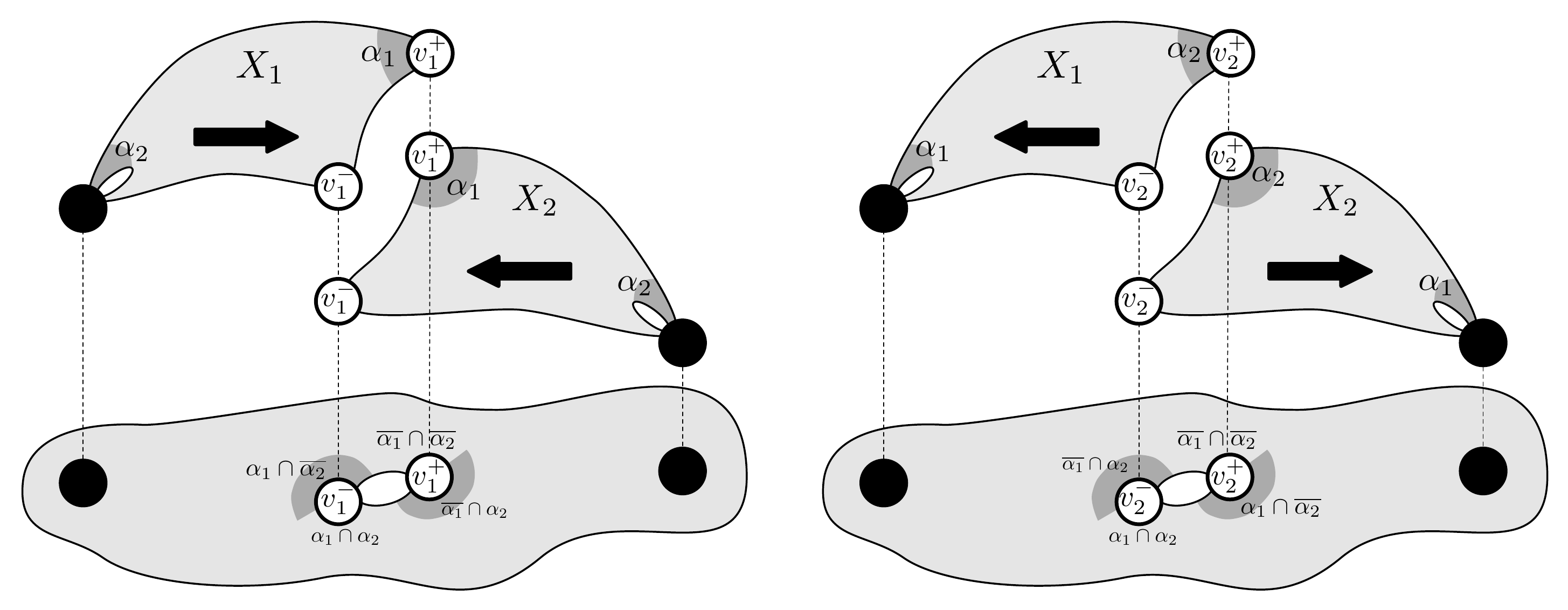}
\end{center}
\vspace{-0.3in}\caption{Example of nova twins with $|\bS|=2$.}
\label{fig:nova}
\end{figure}
%
%-------------------------------------------------
\subsubsection{Shih siblings}
%-------------------------------------------------
We say that $(G_1,\Sigma_1)$ and $(G_2,\Sigma_2)$ are simple (respectively nova) {\em siblings} if,
for $i=1,2$, there exists $(G_i',\Sigma_i')$ equivalent to $(G_i,\Sigma_i)$ such that
$(G_1',\Sigma_1')$ and $(G_2',\Sigma_2')$ are simple (respectively nova) twins.
%-------------------------------------------------
\subsubsection{Reduction}\label{sec:split:reduction}
%-------------------------------------------------
Consider grafts $(G_1,T_1)$ and $(G_2,T_2)$ where, for $i=1,2$, $T_i$ consists of vertices $v_i^-,v_i^+$. 
We write $(G_1,T_1)\oplus(G_2,T_2)$ to indicate the graft $(G,T)$ where $G$ is obtained from $G_1$ and $G_2$
by identifying vertex $v_1^-$ with $v_2^-$ and by identifying vertex $v_1^+$ with vertex $v_2^+$. 
Denote by $v^-$ (respectively $v^+$) the vertex in $G$ corresponding to $v_1^-,v_2^-$ (respectively $v_2^-,v_2^+$) and let $T=\{v^-,v^+\}$. 
Note that $(G,T)$ is defined uniquely from $(G_1,T_1)$ and $(G_2,T_2)$ up to a possible 
Whitney-flip on $E(G_1)$.

Consider split siblings $(G_1,\Sigma_1)$ and $(G_2,\Sigma_2)$ and let $T_1,T_2$ be the matching terminal pair. 
Suppose that there exists $X\subseteq E(G_1)$ such that $\zB_{G_1}(X)=T_1$. 
For $i=1,2$, let $H_i$ be obtained from $G_i$ by identifying the vertices in $T_i$ to a single vertex $v_i$. 
Then $H_1[X]$ is a block of $H_1$ attached to vertex $v_1$. 
As $(G_1,T_1)$ and $(G_2,T_2)$ are split siblings, $H_2[X]$ is also a block of $H_2$ attached to $v_2$.
It follows that $\zB_{G_2}(X)=T_2$. For $i=1,2$, define $G'_i:=G_i[X]$ and $G''_i:=G_i[\bar{X}]$.
Let $T'_i$ and $T''_i$ denote the vertices corresponding to $T_i$ in $G'_i$ and $G''_i$ respectively. 
Then, for $i=1,2$, $(G_i,T_i)=(G'_i,T'_i)\oplus(G''_i,T''_i)$.
Observe that $(G'_1,T'_1)$ and $(G'_2,T'_2)$ are split siblings and so are $(G''_1,T''_1)$ and $(G''_2,T''_2)$.
We say in that case that $(G_1,\Sigma_1)$ and $(G_2,\Sigma_2)$ can be {\em reduced}.
%------------------------------------------------------------------------------------------------
\subsection{Isomorphism for quad siblings} \label{sec:doublesplit}
%------------------------------------------------------------------------------------------------
%
The main result of this section is the following.
\begin{theorem}\label{doublesplit:char}
Let $M$ be a $3$-connected non-graphic even cycle matroid. If $(G_1,\Sigma_1)$ and $(G_2,\Sigma_2)$ are representations of $M$
which are quad siblings, then they are either:
\begin{enumerate}[\;\;\;(1)]
	\item shuffle siblings,
	\item tilt siblings, 
	\item twist siblings,
	\item widget siblings,
	\item gadget siblings, or
	\item $\Delta$-reducible.
\end{enumerate}
\end{theorem}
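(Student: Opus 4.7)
The plan is to follow the blueprint of Theorem~\ref{split:char} adapted to the richer quad-template setting, in which each of $H_1$ and $H_2$ carries a blocking \emph{pair} rather than a blocking \emph{vertex}. Begin by fixing a quad-template
$\dst=(H_1,v_1,w_1,\alpha_1,\beta_1,H_2,v_2,w_2,\alpha_2,\beta_2)$
from which $(G_1,\Sigma_1)$ and $(G_2,\Sigma_2)$ arise. Since $M=\ecycle(G_i,\Sigma_i)$ is non-graphic, the contrapositive of Remark~\ref{intro:projection} shows that neither $(G_1,\Sigma_1)$ nor $(G_2,\Sigma_2)$ has a blocking vertex, which prevents the argument from collapsing into the split-sibling setting of Section~\ref{sec:split}. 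Since $M$ is 3-connected, Proposition~\ref{connectivity:n3c} forces $G_i\setminus\loops(G_i)$ to be 2-connected and every 2-separation of $G_i$ to be non-bipartite on both sides; translated through the unfolding this controls the 2-separations of $H_i$ that interact with $\{v_i,w_i\}$.

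Next, encode the equivalence of $H_1$ and $H_2$ by writing $H_2=\Wflip[H_1,\bS]$ for some w-sequence $\bS=(X_1,\ldots,X_k)$, using the structural content of Section~\ref{sec:whitney} that equivalent graphs (2-connected up to loops) are related by Whitney-flips on 2-separations. By Theorem~\ref{isopair-co} the matching signature is determined up to signature exchange from the unfolding data, so each $X_j$ may be examined inside $(H_1,\Gamma_1)$ with $\Gamma_1=\alpha_1\tr\beta_1$. The position of each $\zB_{H_1}(X_j)$ relative to the blocking pair $\{v_1,w_1\}$ partitions the analysis into a small number of qualitative cases: disjoint from $\{v_1,w_1\}$; meeting it in exactly one vertex of the pair; meeting it in one vertex outside the pair; or equal to $\{v_1,w_1\}$. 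Combined with the parity information recorded by $\alpha_i,\beta_i$ on $X_j$, each configuration prescribes how the unfolded graft $(G_1,T_1)$ transports to $(G_2,T_2)$.

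From here the task is to match each configuration to one of the named sibling classes. Isolated flips that permute the unfolding terminals of a single blocking vertex should produce \emph{shuffle}, \emph{tilt}, or \emph{twist} siblings according to which pair of terminals in $T_i$ is exchanged and the parity of $|X_j\cap\Gamma_1|$; coordinated w-sequences clustered around a single blocking vertex (the quad analogue of the w-star used in the nova construction of Section~\ref{sec:split}) should yield \emph{widget} siblings, and analogous clusters simultaneously at $v_i$ and $w_i$ should yield \emph{gadget} siblings. Whenever a 2-separation of $G_1$ whose boundary is contained in $T_1$ arises as a by-product, the siblings cleave along it in analogy with the $\oplus$-reduction of Section~\ref{sec:split:reduction}, producing the $\Delta$-reducibility outcome.

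The main obstacle will be the completeness of the case analysis. With two blocking vertices on each side, a single 2-separation $X_j$ interacts with $\{v_1,w_1\}$ in many distinct ways and different members of $\bS$ may interlock, so a normal form for $\bS$, obtained by permuting, merging, and absorbing flips while preserving $\Wflip[H_1,\bS]=H_2$, will be needed to restrict attention to star-like families of 2-separations around $\{v_1,w_1\}$. Additionally, the decomposition $\Gamma_i=\alpha_i\tr\beta_i$ is not unique and the correspondence between $\{v_1,w_1\}$ and $\{v_2,w_2\}$ allows a transposition, so the argument must verify that different admissible choices of $\alpha_i,\beta_i$ and of the pair-matching lead to siblings in the same class. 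This interplay of normalization, uniqueness of the unfolding data, and exhaustive case analysis is what makes quad-template arguments substantially more intricate than their single-blocking-vertex analogues.
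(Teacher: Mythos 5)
Your outline reproduces the paper's high-level strategy (use Remark~\ref{intro:projection} to get the ec-standard property, use Proposition~\ref{connectivity:n3c} for connectivity, write $H_2=\Wflip[H_1,\bS]$, normalize $\bS$ to a star-like family around the blocking pair via the machinery of Section~\ref{sec:whitney}, and then classify by how the boundaries of the flips meet $\{v_i,w_i\}$). But as written it has a genuine gap in two places, and it defers exactly the part of the argument where the work is.

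First, you misidentify the mechanism behind outcome (6). The $\Delta$-reducibility outcome is \emph{not} the analogue of the $\oplus$-reduction of Section~\ref{sec:split:reduction}; it does not come from cleaving along a $2$-separation of $G_1$ with boundary $T_1$. In the paper it comes from Lemma~\ref{doublesplit:clean}: whenever the cut $\alpha_1\tr\beta_1\tr\alpha_2\tr\beta_2$ restricted to a piece of the decomposition is ``generic,'' Lemma~\ref{doublesplit:rem4} or~\ref{doublesplit:rem5} manufactures a set $Y$ that is a $3$-$(0,1)$-separation of \emph{both} $(G_1,\Sigma_1)$ and $(G_2,\Sigma_2)$; one then shows $G_1[Y]=G_2[Y]$ and replaces $Y$ by a triangle. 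This lemma is the only way the case analysis closes: without it, the cases where the relevant cut is neither empty nor a vertex star cannot be discharged, and your plan gives no substitute. Second, your proposed assignment of configurations to the named classes does not match the definitions: widget and gadget twins both arise from type~II templates, i.e.\ w-stars centered at $v_i$ with $w_1$ interior to a petal; they are distinguished by whether the star has one petal or two, not by whether the flips cluster at one blocking vertex or at both. A case split organized around ``one vertex of the pair versus both'' would misclassify these configurations and would also miss the key claim (Claim~\ref{doublesplit:cl2} in the paper) that if $w_1$ is interior to a flip on one side then $w_2$ must be interior to the complementary piece on the other side. Finally, the proposal explicitly postpones the completeness of the case analysis; since Lemmas~\ref{doublesplit:le0}, \ref{doublesplit:le2}, \ref{doublesplit:le3} and~\ref{doublesplit:le4} constitute essentially the entire proof, what remains is a roadmap rather than an argument.
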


\noindent
The proof of Theorem~\ref{doublesplit:char} is in Section~\ref{sec:proofs2}.
The terms ``shuffle siblings", ``tilt siblings", ``twist siblings", ``widget siblings", ``gadget siblings"
and ``$\Delta$-reducible" are defined in the next sections.
%-------------------------------------------------
\subsubsection{Shuffle twins}
%-------------------------------------------------
Consider a graph $G$ and let $\{a,b,c,d\}\subseteq V(G)$.
Suppose that $E(G)$ can be partitioned into sets $X_1,\ldots,X_4$ (not necessarily all non-empty) 
such that, for all $i\in[4]$, $\zB_G(X_i)\subseteq\{a,b,c,d\}$. 
For all $i\in[4]$, denote by $a_i$ (respectively $b_i,c_i,d_i$) the copy of vertex 
$a$ (respectively $b,c,d$) of $G[X_i]$. Then construct $G'$ by:
\begin{itemize}
\item identifying vertices $a_1,b_2,c_3,d_4$ to a vertex $a'$;
\item identifying vertices $b_1,a_2,d_3,c_4$ to a vertex $b'$;
\item identifying vertices $c_1,d_2,a_3,b_4$ to a vertex $c'$;
\item identifying vertices $d_1,c_2,b_3,a_4$ to a vertex $d'$.
\end{itemize}
We say that $G$ and $G'$ are {\em shuffle twins}.
We will show that they are siblings with matching terminal pair $\{a,b,c,d\}$ and $\{a',b',c',d'\}$.
Shuffle twins were introduced by Norine and Thomas~\cite{NT}.

Let $H$ (respectively $H'$) be obtained by folding $(G,\{a,b,c,d\})$ (respectively $(G',\{a',b'$, $c',d'\})$) with the pairing $a,b$ and $c,d$
(respectively $a',b'$ and $c',d'$).
Let $\alpha:=\delta_{G}(a)$, $\beta:=\delta_{G}(c)$,
$\alpha':=\delta_{G'}(a')$ and $\beta':=\delta_{G'}(c')$.
Then $(H_1,\alpha \tr \beta)$ and $(H_2,\alpha' \tr \beta')$ are equivalent, hence $G$ and $G'$
are quad siblings with matching terminal pair 
$\{a,b,c,d\}$ and $\{a',b',c',d'\}$.
\begin{figure}[h]
\begin{center}
\includegraphics[width=11cm]{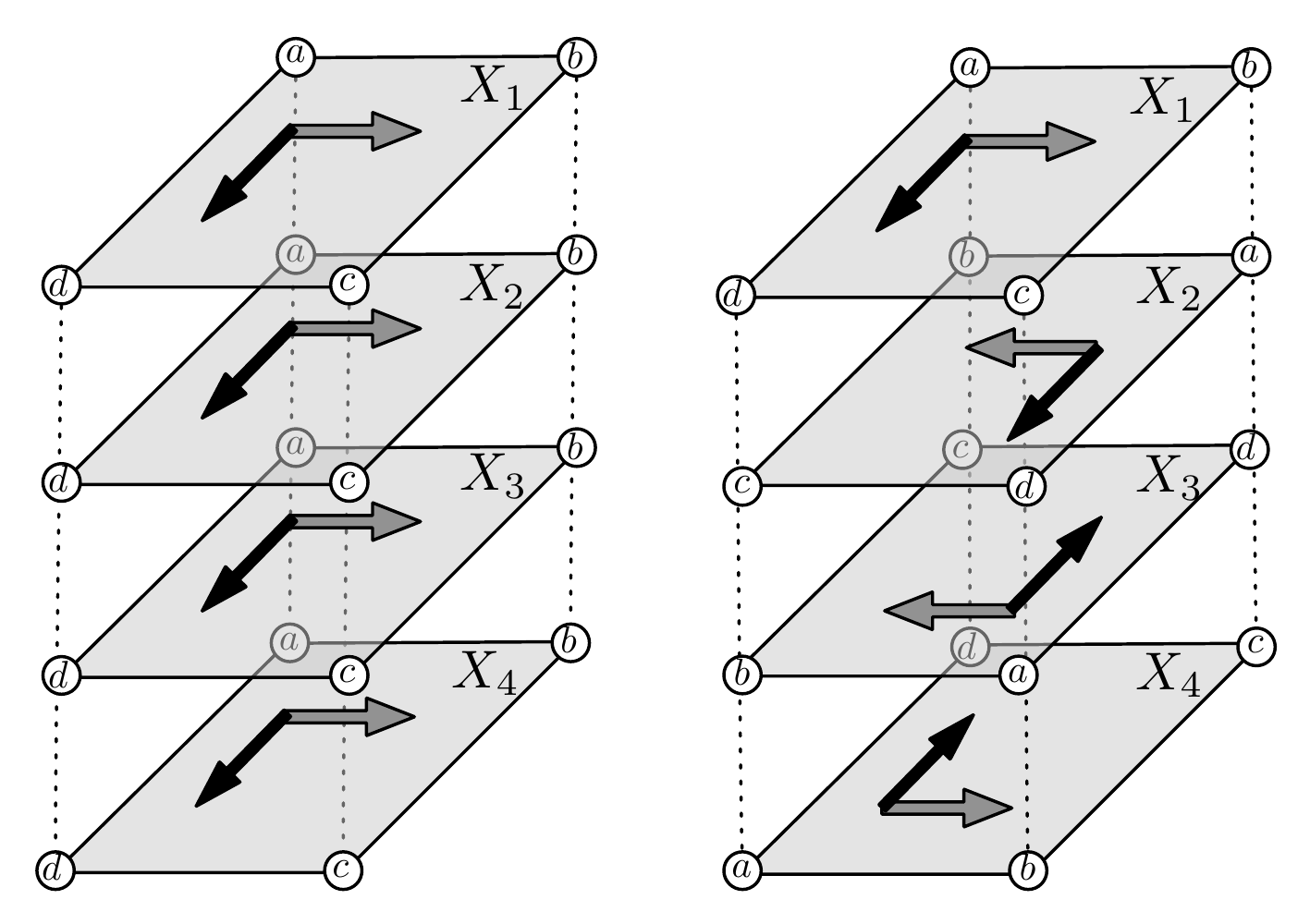}
\end{center}
\vspace{-0.3in}\caption{Shuffle twins. Dotted lines denote vertices that are identified.}
\end{figure}
%-------------------------------------------------
\subsubsection{Tilt twins}\label{sec:tiltDef}
%-------------------------------------------------
Consider a graph $G$ with 
distinct edges $e,f,g,h\in E(G)$ and distinct vertices $a_1$, $a_2$, $b_1$, $b_2$, $c$, $d$.
Suppose that $e$ and $f$ have ends $a_1,a_2$ and $g$ and $h$ have ends $b_1,b_2$. 
Suppose we can partition $E(G)$ into $X_1,X_2,\{e,f,g,h\}$, such that 
$V_G(X_1)\cap V_G(X_2)=\{c,d\}$ and 
$a_1,b_1\in V_G(X_1)$, 
$a_2,b_2\in V_G(X_2)$. 
For $i=1,2$, denote by $c_i$ (respectively $d_i$) the copy of vertex $c$ (respectively $d$) in $G[X_i]$. 
Construct $G'$ from $G[X_1],G[X_2]$ by:
\begin{itemize}
\item identifying vertices $a_1$ and $a_2$;
\item identifying vertices $b_1$ and $b_2$;
\item joining $c_1$ and $c_2$ with edges $e$ and $g$;
\item joining $d_1$ and $d_2$ with edges $f$ and $h$.
\end{itemize}
We say that $G$ and $G'$ are {\em tilt twins}.
In general, we say that $G$ and $G'$ are tilt twins even if not all edges $e,f,g,h$ in the above construction are present.
Tilt twins were introduced by Gerards~\cite{Gerards00}.

Let $H$ (respectively $H'$) be obtained 
by folding $(G,\{a_1,a_2,b_1,b_2\})$ (respectively $(G',\{c_1$, $c_2,d_1,d_2\})$) with the pairing $a_1,a_2$ and $b_1,b_2$
(respectively $c_1,c_2$ and $d_1,d_2$).
Let $\alpha:=\delta_{G}(a_1)$, $\beta:=\delta_{G}(b_1)$,
$\alpha':=\delta_{G'}(c_1)$ and $\beta':=\delta_{G'}(d_1)$.
Then $(H_1,\alpha \tr \beta)$ and $(H_2,\alpha' \tr \beta')$ are equivalent, hence $G$ and $G'$
are quad siblings with matching terminal pair 
$\{a_1,a_2,b_1,b_2\}$ and $\{c_1,c_2,d_1,d_2\}$.
\begin{figure}[h]
\begin{center}\includegraphics[width=10.5cm]{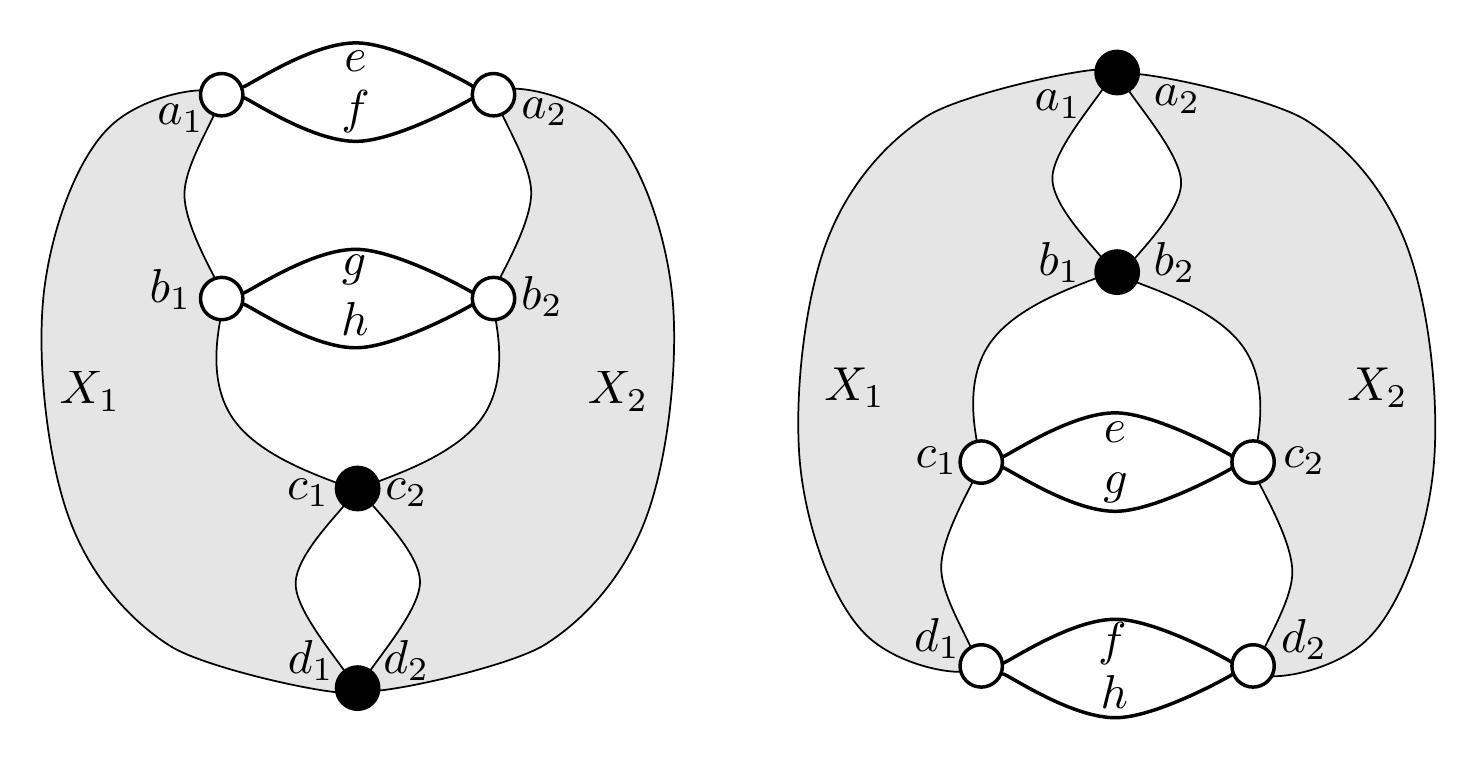}\end{center}
\vspace{-0.3in}\caption{Tilt twins.}
\label{fig:tilt}
\end{figure}
%-------------------------------------------------
\subsubsection{Twist twins}
%-------------------------------------------------
Consider a graph $G$ with 
distinct edges $e,f,g,h$ and distinct vertices $a_1,a_2,b,c,d$.
Suppose that $e$ and $f$ have ends $a_1,a_2$ and $g$ and $h$ have ends $b,c$.
Suppose we can partition $E(G)$ into $X_1,X_2,\{e,f,g,h\}$ such that 
$V_G(X_1)\cap V_G(X_2)=\{b,c,d\}$ and
$a_1\in V(X_1)$, $a_2\in V(X_2)$. 
For $i=1,2$, let $b_i$ (respectively $c_i,d_i$) denote the copy of vertex $b$ (respectively $c,d$) in $G[X_i]$. 
Construct $G'$ from $G[X_1],G[X_2]$ by:
\begin{itemize}
\item identifying vertices $a_1$ and $a_2$;
\item identifying vertices $b_1$ and $c_2$, calling the resulting vertex $\tilde{b}$;
\item identifying vertices $c_1$ and $b_2$, calling the resulting vertex $\tilde{c}$;
\item joining $\tilde{b}$ and $\tilde{c}$ with edges $e$ and $g$;
\item joining $d_1$ and $d_2$ with edges $f$ and $h$.
\end{itemize}
We say that $G$ and $G'$ are {\em twist twins}.
In general, we say that $G$ and $G'$ are twist twins even if not all edges $e,f,g,h$ in the above construction are present.

Let $H$ (respectively $H'$) be obtained 
by folding $(G,\{a_1,a_2,b,c\})$ (respectively $(G',\{\tilde{b},\tilde{c}$, $d_1,d_2\})$) with the pairing $a_1,a_2$ and $b,c$
(respectively $\tilde{b},\tilde{c}$ and $d_1,d_2$).
Let $\alpha:=\delta_{G}(a_1)$, $\beta:=\delta_{G}(b)$,
$\alpha':=\delta_{G'}(\tilde{b})$ and $\beta':=\delta_{G'}(d_1)$.
Then $(H_1,\alpha \tr \beta)$ and $(H_2,\alpha' \tr \beta')$ are equivalent, hence $G$ and $G'$
are quad siblings with matching terminal pair 
$\{a_1,a_2,b,c\}$ and $\{\tilde{b},\tilde{c},d_1,d_2\}$.
\begin{figure}[h]
\begin{center}\includegraphics[width=10.5cm]{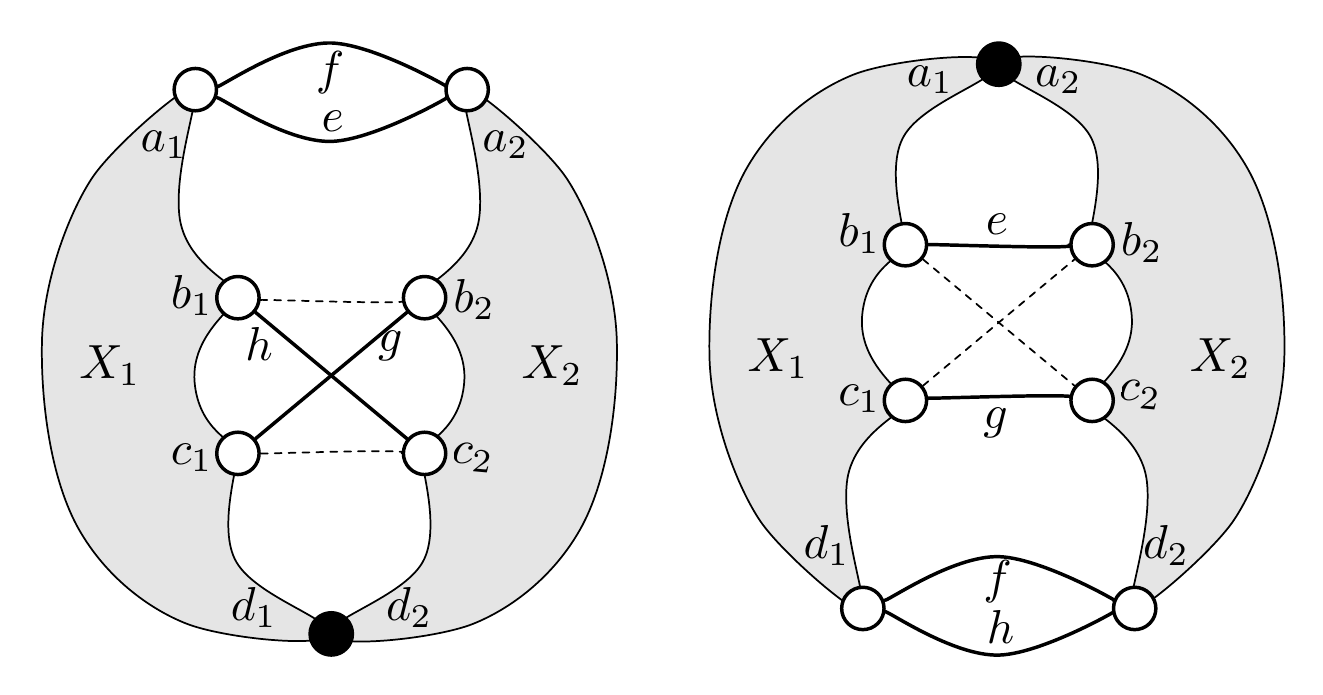}\end{center}
\vspace{-0.3in}\caption{Twist twins.  Dotted lines denote vertices that are identified.}
\end{figure}
%-------------------------------------------------
\subsubsection{Widget twins}
%-------------------------------------------------
Consider a graph $H_1$ with distinct edges $a,b,c,d,e,f,\ell_1,\ell_2,\ell_3,\ell_4$ 
and distinct vertices $v_1,z_1,w_1$ and $w_2$.
Suppose that $a$ and $b$ have ends $v_1,w_2$; $c$ and $d$ have ends $z_1,w_2$; $e$ and $f$ have ends $v_1,w_1$ 
and $\loops(H_1)=\{\ell_1,\ell_2,\ell_3,\ell_4\}$.
Suppose we can partition $E(H_1)$ into $X,\{a,b,c,d,e,f\},\loops(H_1)$ such that 
$\delta_{H_1}(w_2)=\{a,b,c,d\}$ and $\zB_{H_1}(X)=\{v_1,z_1,w_1\}$.
Let $H_2=\Wflip[H_1,\{a,b,c,d\}]$. Let the vertices in $H_2$ which are not in $\zB_{H_2}(\{a,b,c,d\})$
be labeled as in  $H_1$. Let $v_2 \in V(H_2)$ be the endpoint of $c$ distinct from $w_2$.
Let $\gamma \subseteq \delta_{H_1}(v_1) \cap X$.
Define $\alpha_1:=\gamma \cup \{a,e,\ell_1,\ell_2\}$; $\beta_1:=\{e,f,\ell_3,\ell_4\}$;
$\alpha_2:= \gamma \cup \{f,c,\ell_1,\ell_3\}$ and $\beta_2:=\{a,c,\ell_2,\ell_4\}$.
Let $\dst=(H_1,v_1,w_1,\alpha_1,\beta_1,H_2,v_2,w_2,$ $\alpha_2,\beta_2)$.
Note that $\dst$ is a quad-template.
Let $(G_1,\Sigma_1)$ and $(G_2,\Sigma_2)$ be the quad siblings arising from $\dst$.
We say that $G_1$ and $G_2$ are {\em widget twins}.
%-------------------------------------------------
\subsubsection{Gadget twins}
%-------------------------------------------------
Consider a graph $H_1$ with distinct edges $a_1,b_1,c_1,d_1,a_2,b_2,c_2,d_2,\ell_1,\ell_2,\ell_3,\ell_4$ 
and distinct vertices $v_1,z_1,u_1,w_1,w_2$.
Suppose that $a_i$ and $b_i$ have ends $v_1,w_i$, for $i=1,2$; $c_1$ and $d_1$ have ends $z_1w_1$; $c_2$ and $d_2$ have ends $u_1w_2$ 
and $\loops(H_1)=\{\ell_1,\ell_2,\ell_3,\ell_4\}$.
Suppose that we can partition $E(H_1)$ into sets $X,\{a_1,b_1,c_1,d_1,a_2,b_2,c_2,d_2\},\loops(H_1)$ such that 
$\delta_{H_1}(w_i)=\{a_i,b_i,c_i,d_i\}$, for $i=1,2$, and $\zB_{H_1}(X)=\{v_1,z_1,u_1\}$.
Let $H_2=\Wflip[H_1,(\{a_1,b_1,c_1,d_1\},\{a_2,b_2,c_2,d_2\})]$. 
Let the vertices in $H_2$ which are not in $\zB_{H_2}(\{a_1,a_2,b_1,b_2,c_1,c_2,d_1,d_2\})$
be labeled as in $H_1$. Let $v_2 \in V(H_2)$ be the endpoint of $c_1$ distinct from $w_1$.
Let $\gamma \subseteq \delta_{H_1}(v_1) \cap X$. 
Define $\alpha_1:=\gamma \cup \{a_1,a_2,\ell_1,\ell_2\}$, $\beta_1:=\{a_1,c_1,\ell_3,\ell_4\}$,
$\alpha_2= \gamma \cup \{c_1,c_2,\ell_1,\ell_3\}$ and $\beta_2:=\{a_2,c_2,\ell_2,\ell_4\}$.
Let $\dst=(H_1,v_1,w_1,\alpha_1,\beta_1$, $H_2$, $v_2$, $w_2,\alpha_2,\beta_2)$.
Note that $\dst$ is a quad-template.
Let $(G_1,\Sigma_1)$ and $(G_2,\Sigma_2)$ be the quad siblings arising from $\dst$.
We say that $G_1$ and $G_2$ are {\em gadget twins}.

%-------------------------------------------------
\subsubsection{Quad siblings}
%-------------------------------------------------
We say that signed graphs $(G_1,\Sigma_1)$ and $(G_2,\Sigma_2)$ are {\em shuffle} 
(respectively {\em tilt}, {\em twist}, {\em widget}, {\em gadget}) {\em siblings} if,
for $i=1,2$, there exists $(G_i',\Sigma_i')$ equivalent to $(G_i,\Sigma_i)$ such that
$(G_1',\Sigma_1')$ and $(G_2',\Sigma_2')$ are shuffle (respectively tilt, twist, widget, gadget) twins.

%-------------------------------------------------
\subsubsection{$\Delta$-reduction}
%-------------------------------------------------
Consider siblings $(G_1,\Sigma_1)$ and $(G_2,\Sigma_2)$ and 
suppose that edges $\{e_1,e_2,e_3\}$ form a
triangle of both $G_1$ and $G_2$ and (after possibly resigning) $\{e_1,e_2,e_3\}\cap\Sigma_i=\emptyset$, for $i=1,2$.
Let $H$ be a graph with distinct vertices $v_{12},v_{13},v_{23}$.
For $i=1,2$, let $G'_i$ be the graph obtained from $G_i$ by
(for all distinct $j,k\in[3]$) 
identifying the vertex of $G_i$ incident to both $e_j$ and $e_k$ 
with the vertex $v_{jk}$ of $H$, and by then deleting the edges $e_1,e_2,e_3$.
We say that $(G'_1,\Sigma_1)$ and $(G'_2,\Sigma_2)$ are obtained by a {\em $\Delta$-substitution} 
from $(G_1,\Sigma_1)$ and $(G_2,\Sigma_2)$ 
and that $(G_1,\Sigma_1)$ and $(G_2,\Sigma_2)$ are obtained by a {\em $\Delta$-reduction} from $(G_1',\Sigma_1)$ and $(G_2',\Sigma_2)$. 
By possibly omitting some of the edges of the triangle, we will make sure to not create parallel edges of the same parity
when applying a $\Delta$-reduction.
Note that in this case $(G'_1,\Sigma_1)$ and $(G'_2,\Sigma_2)$ are also siblings.

We say that siblings $(G_1,\Sigma_1)$ and $(G_2,\Sigma_2)$ are {\em $\Delta$-irreducible}
if no $\Delta$-reduction is possible in $(G_1,\Sigma_1)$, $(G_2,\Sigma_2)$,
otherwise we say that the siblings are {\em $\Delta$-reducible}.
We mainly consider $\Delta$-reductions to simplify the definitions of the various types of quad siblings.
For example, suppose that $(G_1,\Sigma_1)$ and $(G_2,\Sigma_2)$ are tilt twins, with the same notation as
in the definition of tilt twins in Section~\ref{sec:tiltDef}. Suppose that
$G_1$ contains edges $e_1$ and $e_2$ with ends $a_1,c$ and $a_2,c$ respectively. 
Then $\{e,e_1,e_2\}$ is an even triangle of both $G_1$ and $G_2$ and such a triangle may be substituted by any graph $H$.
\section{Whitney-flips}\label{sec:whitney}
%=======================================================================
In this section we provide results about equivalent graphs which will be used 
to prove Theorems~\ref{split:char} and~\ref{doublesplit:char}.
A difficulty when dealing with Whitney-flips comes from crossing $2$-separations.
We show how, in the cases we are interested in, we can reduce to considering only 
Whitney-flips on non-crossing separations.
Throughout this section graphs are $2$-connected.
However, the notions of w-sequences, the operation $\Wflip$ and the results in this section extend naturally to the class of graphs 
that are $2$-connected except for possible loops.
%------------------------------------------------------------------------------------------------
\subsection{Whitney-flips avoiding vertices}\label{sec:avoidVertex}
%------------------------------------------------------------------------------------------------
%
Recall the definitions of w-sequence and w-star given in Section~\ref{sec:split}.
We say that two sets $X$ and $Y$ are \emph{crossing} if all of $X \cap Y, X-Y, Y-X$ and $\overline{X} \cap \overline{Y}$ are non-empty.
A family of sets (or sequence) $\bS$ is {\em non-crossing} if $X$ and $Y$ are non-crossing for every $X,Y\in\bS$.
\begin{RE}\label{conn:laminar}
Let $G$ be a graph and let $\bS=(X_1,\ldots,X_k)$ be a non-crossing w-sequence for $G$.
Then for any permutation $i_1,\ldots,i_k$ of $1,\ldots,k$,
$\bS'=(X_{i_1},\ldots,X_{i_k})$ is a w-sequence and $\Wflip[G,\bS]=\Wflip[G,\bS']$.
\end{RE}
\noindent
In light of the previous remark, given a non crossing w-sequence $(X_1,\ldots,X_k)$,
we call the family $\bS:=\{X_1,\ldots,X_k\}$ a w-sequence and the notation $\Wflip[G,\bS]$ is well defined.

We can now state the first of the two main technical results of this section.
\begin{PR}\label{conn:star2}
Let $G$ and $G'$ be $2$-connected equivalent graphs and let $Z \subseteq V(G)$, where $ |Z|\leq 2$.
Then there exist a w-sequence $\bS_1$ of $G$ and a graph $H$ with a non-crossing w-sequence $\bS_2$ such~that: 
\begin{enumerate}[\;\;\;(1)]
	\item $H=\Wflip[G,\bS_1]$, where $Z\cap\zB_G(X)=\emptyset$, for all $X\in \bS_1$; and
	\item $G'=\Wflip[H,\bS_2]$, where $Z\cap\zB_G(X)\neq\emptyset$, for all $X\in\bS_2$.
\end{enumerate}
\end{PR}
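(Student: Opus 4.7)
The starting point is Theorem~\ref{intro:whitney}, which yields a w-sequence $\bS=(Y_1,\ldots,Y_k)$ of $G$ with $G'=\Wflip[G,\bS]$. The plan is to rewrite $\bS$ into the form $\bS_1\cdot\bS_2'$, where $\bS_1$ comprises only $Z$-avoiding flips and $\bS_2'$ only $Z$-meeting flips, and then to uncross the block $\bS_2'$ into the desired non-crossing w-sequence $\bS_2$. Throughout I may assume all intermediate graphs are $2$-connected, since loops are inert under Whitney-flips.

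First I bubble-sort to separate the two types. Suppose consecutive flips $Y_i, Y_{i+1}$ satisfy: $Y_i$ meets $Z$ while $Y_{i+1}$ avoids $Z$. If the two are non-crossing $2$-separations of the intermediate graph, I swap them using the commutation principle underlying Remark~\ref{conn:laminar}. If they cross, I apply an uncrossing lemma: by submodularity of the graph connectivity function the sets $Y_i\cap Y_{i+1}$, $Y_i-Y_{i+1}$, $Y_{i+1}-Y_i$, and $\overline{Y_i\cup Y_{i+1}}$ yield $2$-separations whose pairwise non-crossing structure allows the composition of the two Whitney-flips to be rewritten as a product of Whitney-flips on these pieces. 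Each piece has boundary contained in $\zB(Y_i)\cup\zB(Y_{i+1})$, so it contains at most one vertex of $Z$ and is readily classified as $Z$-avoiding or $Z$-meeting. Iterating this move terminates in the desired decomposition $\bS=\bS_1\cdot\bS_2'$. Setting $H:=\Wflip[G,\bS_1]$ gives condition~(1).

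For condition~(2) I apply the same uncrossing procedure inside $\bS_2'$ until no two members cross. Here the hypothesis $|Z|\le 2$ is essential: since every boundary in $\bS_2'$ contains a vertex of $Z$ and $Z$ has at most two elements, a short case analysis on which element of $Z$ lies in each boundary shows that each piece produced by an uncrossing step still has boundary meeting $Z$. This yields the non-crossing w-sequence $\bS_2$; by Remark~\ref{conn:laminar} its members may be applied in any order. The main obstacle is the uncrossing lemma itself: one must show that any crossing pair of Whitney-flips can be replaced by a sequence of Whitney-flips on non-crossing pieces, which requires careful tracking of how edge endpoints are permuted through successive flips. Verifying that the replacement preserves the $Z$-meeting property within $\bS_2'$ crucially uses $|Z|\le 2$; without this bound, an uncrossed piece could have boundary avoiding $Z$ even though both original boundaries met it, breaking the bubble-sort invariant.
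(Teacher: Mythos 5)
Your proposal hinges on an ``uncrossing lemma'' that you defer rather than prove, and that deferred step is precisely where the difficulty of the proposition lives; as stated, your sketch of it does not go through. First, in a w-sequence $Y_{i}$ is a $2$-separation of the graph obtained after the flips $Y_1,\ldots,Y_{i-1}$, while $Y_{i+1}$ is a $2$-separation of the graph obtained after one further flip; these are different graphs, and a set can have boundary of size $2$ in one and size $4$ in the other (e.g.\ two non-consecutive petals of a flower become non-adjacent after reversing an arc containing one of them). So the submodularity argument producing the four pieces $Y_i\cap Y_{i+1}$, $Y_i-Y_{i+1}$, $Y_{i+1}-Y_i$, $\overline{Y_i\cup Y_{i+1}}$ as $2$-separations of a single graph does not apply directly. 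Second, even when both sets are $2$-separations of the same graph, Remark~\ref{rem:easycross} says the pieces form a flower or a leaflet; in the flower case the composite of the two flips is a nontrivial permutation-with-reversals of the petals, and it does \emph{not} in general factor as a product of flips on those four pieces --- one needs flips on other unions of petals, whose boundaries involve attachment vertices that are not in $\zB(Y_i)\cup\zB(Y_{i+1})$. Your claim that each piece ``contains at most one vertex of $Z$'' is also false even for $|Z|=2$ (both vertices of $Z$ can lie in one piece's boundary). Third, each uncrossing step replaces two flips by several, which may cross other members of the sequence, and you give no decreasing measure, so termination of the bubble-sort is not established. Finally, the example immediately following the proposition in the paper (a $5$-cycle rearranged to $e_1,e_3,e_5,e_2,e_4$) shows that crossing flips cannot in general be eliminated, so any correct argument must invoke $|Z|\le 2$ at a precise structural point; in the paper that point is Lemma~\ref{conn:4petals}, which lets one make \emph{two} specified pairs of petals consecutive by non-crossing flips (this fails for three pairs), whereas your invocation of $|Z|\le 2$ serves a different purpose and rests on the erroneous ``one vertex per piece'' count.

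For comparison, the paper's route is global rather than local: Lemma~\ref{conn:intoflowers} decomposes an arbitrary w-sequence into blocks, each a w-sequence for one of a family of pairwise independent maximal flowers; Lemma~\ref{conn:rearrangeflower} then splits a single flower's w-sequence into a $Z$-avoiding part (flips of a coarser flower in which the petals meeting $Z$ at their attachments have been merged) followed by a non-crossing part; and an extremal argument on the number of flowers finishes the proof. If you want to salvage your approach, you essentially need to reconstruct this flower machinery, since it is what replaces the unproved local uncrossing step.
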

\noindent
Note that we cannot replace $|Z|\leq 2$ by $|Z|\leq k$ for any $k>2$ in the previous proposition, as the following example illustrates.
Suppose that $G$ consists of edges $e_1,e_2,e_3,e_4,e_5$ that form a circuit with edges appearing in that order.
Let $G'$ be the graph obtained from $G$ by rearranging the edges to form a circuit with edges appearing in order $e_1,e_3,e_5,e_2,e_4$.
Suppose that $Z$ consists of $3$ consecutive vertices of the circuit in $G$.
Then every $2$-separation of $G$ contains a vertex of $Z$ but there is no non-crossing w-sequence $\bS$ for which $G'=\Wflip[G,\bS]$.

The other main result of this section is the following.
\begin{PR}\label{conn:startbg}
Consider $2$-connected equivalent graphs $G$ and $G'$ and let $z\in V(G)$ and $z'\in V(G')$.
There exist w-sequences $\bL$ of $G$, $\bL'$ of $G'$ and graphs $H$ and $H'$ such that: 
\begin{enumerate}[\;\;\;(1)]
	\item $H=\Wflip[G,\bL]$, where $z\not\in\zB_G(X)$, for all $X\in\bL$;
	\item $H'=\Wflip[G',\bL']$, where $z'\not\in\zB_{G'}(X)$, for all $X\in\bL'$; and
	\item $H'=\Wflip[H,\bS]$, %and $H=\Wflip[H',\bS]$.
\end{enumerate}
where $\bS$ is a w-star of $H$ with center $z$ and a w-star of $H'$ with center $z'$.
\end{PR}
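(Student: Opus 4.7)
The plan is to apply Proposition~\ref{conn:star2} twice -- once with $Z=\{z\}$ and once with $Z=\{z'\}$ -- and then show that the non-crossing w-sequence ``sandwiched'' between the two applications forms the required w-star.

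First I apply Proposition~\ref{conn:star2} with $Z:=\{z\}$ to the pair $(G,G')$. This yields a graph $H$ together with w-sequences $\bL$ of $G$ and $\mathcal{T}$ of $H$ satisfying $H=\Wflip[G,\bL]$ with $z\notin\zB_G(X)$ for every $X\in\bL$, and $G'=\Wflip[H,\mathcal{T}]$ with $\mathcal{T}$ non-crossing and $z\in\zB_H(X)$ for every $X\in\mathcal{T}$. Conclusion~(1) is thus satisfied. Next I apply Proposition~\ref{conn:star2} with $Z:=\{z'\}$ to the pair $(G',H)$. This produces a graph $H'$ together with w-sequences $\bL'$ of $G'$ and a non-crossing w-sequence $\bS$ satisfying $H'=\Wflip[G',\bL']$ with $z'\notin\zB_{G'}(X)$ for every $X\in\bL'$, and $H=\Wflip[H',\bS]$ with $z'\in\zB_{H'}(X)$ for every $X\in\bS$. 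Conclusion~(2) follows, and since Whitney-flips are involutive and $\bS$ is non-crossing, we also obtain $H'=\Wflip[H,\bS]$, which gives most of~(3).

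It remains to verify that $\bS$ is a w-star of $H$ centered at $z$ and a w-star of $H'$ centered at $z'$. The $H'$-center property is immediate from the output of the second application. To see that each $X\in\bS$ also has $z\in\zB_H(X)$, I compare the two decompositions of the composite transformation $H\to G'$: directly via $\mathcal{T}$ (whose flips all touch $z$ in $H$) and via $\bS$ followed by $\bL'$ (whose flips avoid $z'$ in $G'$). Tracing the correspondence of vertex labels through $H$, $H'$ and $G'$, the fact that $\bL'$ does not disturb $z'$ forces the vertex $z$ of $H$ to be identified with $z'$ in $H'$, and hence $z$ lies in the $H$-boundary of every $X\in\bS$.

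The main obstacle is verifying the remaining w-star conditions: pairwise disjointness of the sets of $\bS$, distinctness of their ``other'' boundary vertices, and the no-parallel-edge condition~(c). The non-crossing property, combined with the simultaneous presence of $z$ in $\zB_H$ and $z'$ in $\zB_{H'}$, restricts possible configurations sharply: any nested or complementary pair in $\bS$ would produce a $2$-separation whose boundary avoids $z$ (or $z'$), contradicting the minimality inherent in Proposition~\ref{conn:star2}; a similar argument excludes repeated ``other'' boundary vertices and stray parallel edges. If a single pair of applications does not suffice, I iterate the pushout -- alternating the roles of $z$ and $z'$ -- with termination guaranteed by the strict decrease in the size of the middle sequence.
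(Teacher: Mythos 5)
Your first step coincides with the paper's: one application of Proposition~\ref{conn:star2} with $Z=\{z\}$ gives $\bL$, a graph $H$, and a non-crossing sequence $\mathcal{T}$ from $H$ to $G'$ all of whose members touch $z$ in $H$. The gap is in your second step. Applying Proposition~\ref{conn:star2} afresh to the pair $(G',H)$ with $Z=\{z'\}$ produces a \emph{new} decomposition $\bL'$, $\bS$ of the equivalence between $G'$ and $H$, and this decomposition carries no information about $z$: nothing in Proposition~\ref{conn:star2} ties its output to the previously found sequence $\mathcal{T}$, and two w-sequences realizing the same equivalence need not use related $2$-separations. Your attempt to recover the property ``$z\in\zB_H(X)$ for all $X\in\bS$'' by arguing that $z$ ``is identified with $z'$ in $H'$'' is unfounded -- the statement being proved does not require (and in general does not have) $z$ and $z'$ becoming the same vertex; they are merely the two centers of the same w-star in two different graphs. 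The paper avoids this problem by never re-decomposing: it keeps the sequence $\mathcal{T}$ (its $\bS_0$), uses Remark~\ref{conn:laminar} to treat it as an unordered non-crossing family, and simply partitions it into $\bL'=\{X\in\bS_0:z'\notin\zB_{G'}(X)\}$ and $\bS_1=\bS_0-\bL'$; since every member of $\bS_0$ already touches $z$ in $H$, the subfamily $\bS_1$ inherits both properties at once.

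A second, lesser gap is your treatment of the w-star conditions. Disjointness, distinctness of the non-center boundary vertices, and condition~(c) do not follow from any ``minimality inherent in Proposition~\ref{conn:star2}'' -- that proposition asserts no minimality. They are obtained by the explicit massaging in Lemma~\ref{conn:makestar} (replacing sets by complements to reach a laminar family, taking symmetric differences of sets with equal boundaries, invoking Remark~\ref{conn:nested} to rule out deep nesting, and deleting edges joining the two boundary vertices), and that lemma needs exactly the two hypotheses your construction fails to secure simultaneously, namely $z\in\zB_H(X)$ and $z'\in\zB_{H'}(X)$ for every $X$ in the middle sequence. The closing suggestion to ``iterate the pushout'' with a decreasing middle sequence is not developed enough to substitute for either of these missing pieces.
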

Recall that w-stars were defined in Section~\ref{sec:split}.
The proofs of Propositions~\ref{conn:star2} and~\ref{conn:startbg} are postponed until Section~\ref{conn:proof1}.
The next section introduces flowers and contains results needed to prove Propositions~\ref{conn:star2} and~\ref{conn:startbg}.
%------------------------------------------------------------------------------------------------
\subsection{Flowers}
%------------------------------------------------------------------------------------------------
For a graph $H$, we say that a partition $\bF=\{B_1,\ldots,B_t\}$ of $E(H)$, with $t\geq 2$, is a {\em flower} if there exist
distinct $u_1,\ldots,u_t \in V(H)$ such that (after possibly relabeling $B_1,\ldots,B_t$):
\begin{enumerate}[\;\;\;(a)]
	\item $H[B_i]$ is connected, for every $i\in[t]$, and
	\item $\zB_H(B_i)=\{u_i,u_{i+1}\}$, for every $i\in[t]$ (where $t+1$ denotes $1$).
\end{enumerate}
\noindent
For $i\in[t]$, $B_i$ (or $H[B_i]$) is a {\em petal} with {\em attachments} $u_i$ and $u_{i+1}$.
We say that the flower is {\em maximal} if no petal has a cut-vertex separating its attachments.
Maximal flowers correspond to generalized circuits as introduced by Tutte in~\cite{Tutte66}.
The term flower was introduced to describe crossing $3$-separations in matroids (see~\cite{OSW}).

Given two partitions $\bF_1$ and $\bF_2$ of the same set, 
we say that $\bF_1$ is a {\em refinement} of $\bF_2$ if every set in $\bF_2$ is the union of sets in $\bF_1$.
Note that, for every flower $\bF$, there is a maximal flower that is a refinement of $\bF$.
Let $\bS_1$ and $\bS_2$ be families of sets over the same ground set. 
We say that $\bS_1$ and $\bS_2$ are {\em independent} if, for every $X\in\bS_1$ and $Y\in\bS_2$, $X$ and $Y$ do not cross.
This definition extends to sequences of sets. Thus we may refer to pairs of independent w-sequences and pairs of independent flowers.
For a graph $H$, we say that a partition $\bF=\{B_1,\ldots,B_t\}$ of $E(H)$, with $t\geq 2$, is a {\em leaflet} if
there exist distinct $u_1,u_2 \in V(H)$ such that:
\begin{enumerate}[\;\;\;(a)]
	\item $H[B_i]$ is connected, for every $i\in[t]$, and
	\item $\zB_H(B_i)=\{u_1,u_2\}$, for every $i\in[t]$.
\end{enumerate}
\begin{RE}\label{rem:easycross}
Let $G$ be a $2$-connected graph and let $X$ and $Y$ be $2$-separations of $G$ that cross. Then $\bF:=\{X\cap Y,X-Y,Y-X,\bar{X}\cap\bar{Y}\}$ is either a flower or a leaflet.
\end{RE}
\noindent
Let $\bF$ be a flower of $G$. 
We say that a $2$-separation $X$ of $G$, where $X$ is the union of petals of $\bF$, is a $2$-separation of $\bF$. The following theorem characterizes pairs of independent flowers.
\begin{LE} \label{conn:idpd-flower}
Let $\bF_1$ and $\bF_2$ be distinct maximal flowers of $G$. The following are equivalent.
\begin{enumerate}[\;\;\;(1)]
	\item $\bF_1$ and $\bF_2$ are independent.
	\item The set of all $2$-separations of $\bF_1$ is independent from the set of all $2$-separations of~$\bF_2$.
	\item There exist petals $B_1$ of $\bF_1$ and $B_2$ of $\bF_2$ such that $\bar{B}_1\subset B_2$ and $\bar{B}_2\subset B_1$.
	\item There is no leaflet $\{B_1,B_2,B_3,B_4\}$ with $\bF_1=\{B_1\cup B_2,B_3\cup B_4\}$ and $\bF_2=\{B_1\cup B_3,B_2\cup B_4\}$. 
\end{enumerate}
\end{LE}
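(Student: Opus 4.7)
The plan is to establish the cycle $(3)\Rightarrow(2)\Rightarrow(1)\Rightarrow(3)$, giving the equivalence of $(1)$, $(2)$, and $(3)$, and then to prove $(1)\Leftrightarrow(4)$. The maximality hypothesis, namely that no petal has a cut vertex separating its two attachments, is the main structural input throughout.

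For $(3)\Rightarrow(2)$, the witnesses $B_1\in\bF_1$ and $B_2\in\bF_2$ force every other petal of $\bF_1$ to lie in $\bar{B}_1\subseteq B_2$ and every other petal of $\bF_2$ to lie in $\bar{B}_2\subseteq B_1$. Any $2$-separation $X$ of $\bF_1$ is a union of petals, so either $X\subseteq B_2$ or $\bar{X}\subseteq B_2$, and the symmetric dichotomy holds for a $2$-separation $Y$ of $\bF_2$; the four resulting cases each leave one Venn region of $X$ and $Y$ empty, so $X$ and $Y$ do not cross. The implication $(2)\Rightarrow(1)$ is immediate since every petal is itself a $2$-separation of its flower, and $(1)\Rightarrow(4)$ is immediate because a forbidden leaflet $\{B_1,B_2,B_3,B_4\}$ would yield crossing petals $B_1\cup B_2\in\bF_1$ and $B_1\cup B_3\in\bF_2$ whose four Venn regions are precisely the non-empty sets $B_1,B_2,B_3,B_4$.

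The crucial step $(1)\Rightarrow(3)$ I would prove by contradiction. First observe that $\bar{B}_1\subsetneq B_2$ automatically implies $\bar{B}_2\subsetneq B_1$, so the failure of $(3)$ means that no pair $(B,B')\in\bF_1\times\bF_2$ has $\bar{B}\subsetneq B'$. Combined with independence, this forces each $B\in\bF_1$ to be either equal to a petal of $\bF_2$, strictly contained in one, or a disjoint union of at least two petals of $\bF_2$. The last option is impossible, since such a union must be a union of \emph{consecutive} petals of $\bF_2$ in its cyclic attachment order, and any interior attachment of $\bF_2$ along this stretch is a cut vertex of $G[B]$ strictly separating the two attachments of $B$ in $\bF_1$, contradicting the maximality of $\bF_1$. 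Hence $\bF_1$ refines $\bF_2$; the symmetric argument applied to $\bF_2$ then forces $\bF_1=\bF_2$, contradicting distinctness. Finally, $(4)\Rightarrow(1)$ uses the same cut-vertex toolkit: given crossing petals $B\in\bF_1$ and $B'\in\bF_2$, Remark~\ref{rem:easycross} produces a flower or a leaflet on the four Venn regions; the flower case violates maximality of $\bF_1$ via an interior shared attachment, and the leaflet case is reduced, by further cut-vertex analysis on the petals of $\bF_1$ and $\bF_2$ adjacent to $B$ and $B'$, to the situation where both flowers are $2$-petal and the leaflet is exactly the one forbidden by $(4)$. The main obstacle is this final reduction, which requires a careful use of maximality on both flowers simultaneously to rule out the possibility that $\bF_1$ or $\bF_2$ has three or more petals.
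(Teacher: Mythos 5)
Your proof is correct and follows essentially the same route as the paper: the heart of $(1)\Rightarrow(3)$ is the observation that a petal of one maximal flower cannot be a disjoint union of two or more petals of the other (this is exactly the paper's Claim~1, which it derives by refining to a flower rather than by your explicit interior-attachment cut-vertex argument, though the two are equivalent given how maximality is defined), followed by the mutual-refinement contradiction, and $(1)\Leftrightarrow(4)$ is handled exactly as you do via Remark~\ref{rem:easycross}. The step you flag as the main obstacle --- showing in the leaflet case that both flowers must have exactly two petals --- is asserted without any further justification in the paper as well, so your sketch matches the paper's level of detail there.
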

\begin{proof}
It is easy to see that (3) $\Rightarrow$ (2) and that (2) $\Rightarrow$ (1).
Let us show that (1) $\Rightarrow$ (3).
\begin{claim}
For $i=1,2$, no petal of $\bF_i$ can be partitioned into a set $\bS$ of petals of $\bF_{3-i}$ with $|\bS|>1$.
\end{claim}
\begin{cproof}
Suppose for a contradiction that $B_i\in\bF_i$ can be partitioned into a set $\bS$ of petals of $\bF_{3-i}$, where $|\bS|>1$. Let $\bF':=\bS\cup\{\bar{B_i}\}$. 
Then $\bF_{3-i}$ is a refinement of $\bF'$, hence $\bF'$ is a flower. 
It follows that the sets in $\bS$ are petals of $\bF_i$, a contradiction as $\bF_i$ is maximal.
\end{cproof}
\noindent
It follows from the claim that there exists a petal $B_1\in\bF_1$ that is not included in any petal of $\bF_2$ and that there exists a petal $B_2\in\bF_2$ such that $B_2\cap B_1$ and $B_2-B_1$ are non-empty.
As $B_1-B_2$ is non-empty and $B_1$ and $B_2$ do not cross, by (1) we must have that $B_1\cup B_2=E(G)$, i.e. (3) holds.
Let us show that (1) $\Leftrightarrow$ (4).
Clearly, if (4) does not hold then neither does (1).
Suppose (1) does not hold, i.e. some petals $X\in\bF_1$ and $Y\in\bF_2$ cross.
Let $\bF:=\{X\cap Y,X-Y,Y-X,\bar{X}\cap\bar{Y}\}$.
Remark~\ref{rem:easycross} implies that $\bF$ is either a flower or a leaflet.
The former case contradicts the fact that $\bF_1$ is maximal, 
and the latter case shows that (4) does not hold.
\end{proof}
Given sequences $\bS=(S_1,\ldots,S_k)$ and $\bS'=(S'_1,\ldots,S'_r)$ we 
denote by $\bS\odot\bS'$ the concatenated sequence $(S_1,\ldots,S_k,S'_1,\ldots,S'_r)$.
Consider a flower $\bF$ of $G$ and let $\bS$ be a w-sequence $(X_1,\ldots,X_k)$ such that, for every $i\in[k]$, 
$X_i$ is a $2$-separation of the flower $\bF$ in the graph $\Wflip[G,(X_1,\ldots$, $X_{i-1})]$.
We then say that $\bS$ is a w-sequence for the flower $\bF$ of $G$. 
\begin{RE}\label{conn:partialreorder}
Let $\bS$ be a w-sequence of $G$ and suppose that $\bS=\bS_1\odot\bS_2$ for some independent sequences $\bS_1$ and $\bS_2$. 
Let $\bS'$ be obtained from $\bS$ by rearranging the order of sets in $\bS$ such that, for $i=1,2$ and every $X,Y\in\bS_i$, 
if $X$ precedes $Y$ in $\bS_i$ it does so in $\bS'$ as well. 
Then $\bS'$ is a w-sequence of $G$ and $\Wflip[G,\bS]=\Wflip[G,\bS']$.
In particular, if $\bF_1$ and $\bF_2$ are independent flowers and, for $i=1,2$, $\bS_i$ is a w-sequence for flower $\bF_i$, 
then $\Wflip[G,\bS_1\odot\bS_2]=\Wflip[G,\bS_2\odot\bS_1]$.
\end{RE}
\begin{LE}\label{conn:intoflowers}
Let $G$ and $H$ be equivalent $2$-connected graphs.
Then there exists a set of maximal independent flowers $\bF_1,\ldots,\bF_k$ and there exists, 
for each $i\in[k]$, a w-sequence $\bS_i$ of $\bF_i$ such that
\[H=\Wflip[G,\bS_1\odot\ldots\odot\bS_k].\]
\end{LE}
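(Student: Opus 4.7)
The plan is to take an arbitrary w-sequence $\bS = (X_1, \ldots, X_n)$ of $G$ with $H = \Wflip[G, \bS]$---which exists by Whitney's theorem and the equivalence of $G$ and $H$---and reorganize it into a concatenation of w-sequences, each confined to a single maximal flower of $G$, with the flowers pairwise independent. I would proceed by induction on $n$; the base case $n = 0$ is immediate (take no flowers).

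For the inductive step, I would isolate a first flower. Define an auxiliary graph $\Gamma$ on the set $\{X_1, \ldots, X_n\}$ in which $X_i$ and $X_j$ are adjacent iff they cross as subsets of $E(G)$, and let $C$ be the connected component of $\Gamma$ containing $X_1$. The first subclaim is that there exists a maximal flower $\bF_1$ of $G$ such that every member of $C$ is a $2$-separation of $\bF_1$. I would construct $\bF_1$ by iteratively applying Remark \ref{rem:easycross} to pairs of crossing members of $C$ and taking the common refinement, then verifying that this refinement carries a flower structure on $E(G)$.

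Having constructed $\bF_1$, I would use Remark \ref{conn:partialreorder} to reorder $\bS$ as $\bS_1 \odot \bS'$, where $\bS_1$ contains the flips in $C$ (in their original relative order) and $\bS'$ contains the remaining flips. Since members of $C$ and of $\bS'$ lie in different components of $\Gamma$, they are pairwise non-crossing, so the reordering is valid. Applying the inductive hypothesis to the transformation of $G_1 := \Wflip[G, \bS_1]$ into $H$ via $\bS'$ produces maximal flowers $\bF_2, \ldots, \bF_k$ with w-sequences $\bS_2, \ldots, \bS_k$, and I would verify that each $\bF_i$ is also a maximal flower of $G$ independent of $\bF_1$. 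Independence follows from Lemma \ref{conn:idpd-flower}: if a $2$-separation of $\bF_1$ crossed a $2$-separation of $\bF_i$, this crossing would propagate (since $2$-separations of these flowers are unions of consecutive petals) to a crossing between some original $X \in C$ and some $X' \in \bS'$, contradicting the choice of $C$ as a connected component of $\Gamma$.

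The main obstacle is the first subclaim---that all members of the crossing component $C$ fit into a single maximal flower of $G$. When three or more pairwise-crossing $2$-separations interact, successive applications of Remark \ref{rem:easycross} might yield leaflets rather than flowers, and one must show that the cumulative refinement still carries a flower structure. I would handle this by maintaining the invariant that the partial refinement is always a flower of $G$ and that each absorbed $2$-separation subdivides existing petals along valid cyclic boundaries, using the $2$-connectivity of $G$ and the fact that each $X_i$ is a legitimate $2$-separation of some intermediate graph along the sequence. Once this combinatorial invariant is established, the independence and reordering arguments follow relatively directly from the preceding results.
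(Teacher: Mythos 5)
Your overall induction-and-reorder strategy is reasonable in outline, but the first subclaim --- that every member of a crossing component $C$ of your auxiliary graph $\Gamma$ can be realized as a $2$-separation of a single maximal flower of $G$ --- is false, and it fails at exactly the configuration the paper's proof is built to handle. By Remark~\ref{rem:easycross}, two crossing $2$-separations $X$ and $Y$ yield either a flower or a \emph{leaflet} $\{B_1,B_2,B_3,B_4\}$ with $X=B_1\cup B_2$ and $Y=B_1\cup B_3$. In the leaflet case all four parts have the same two-vertex boundary, so their common refinement cannot carry a flower structure (a flower requires pairwise distinct attachment vertices $u_1,\ldots,u_t$), and Lemma~\ref{conn:idpd-flower}(4) shows that $X$ and $Y$ then lie in two \emph{distinct}, non-independent maximal flowers, each with exactly two petals. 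No flower of $G$ has both $X$ and $Y$ among its $2$-separations, so the invariant you propose to maintain ("the partial refinement is always a flower of $G$") is not maintainable, and $2$-connectivity does not rescue it. A second, smaller gap: each $X_i$ is only guaranteed to be a $2$-separation of the intermediate graph $\Wflip[G,(X_1,\ldots,X_{i-1})]$, not of $G$ itself, so defining crossing "as subsets of $E(G)$" and then speaking of flowers of $G$ containing all the $X_i$ needs a justification you do not supply.

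The paper resolves the leaflet obstruction by a different mechanism: it inducts on the length of the w-sequence, peels off the \emph{last} flip $X$, and takes the maximal flower $\bF'$ refining $\{X,\bar X\}$. If $\bF'$ is not independent from one of the flowers $\bF_r$ already produced, the leaflet structure of Lemma~\ref{conn:idpd-flower}(4) forces $\bS_r=(B_1\cup B_2)$ and $X=B_1\cup B_3$, and the two consecutive flips \emph{compose into the single flip} on $B_2\cup B_3$, strictly shortening the sequence so that induction closes the argument. This merging of two flips into one is the idea missing from your proposal; without it, connected components of your crossing graph simply do not correspond to flowers, and the decomposition you aim for cannot be produced by refinement alone.
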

\begin{proof}
Since $G$ and $H$ are equivalent and $2$-connected, there exists a w-sequence $\bS$ of $G$ for which $H=\Wflip[G,\bS]$.
Let us proceed by induction on the cardinality $\ell$ of $\bS$. 
Let $X$ be the last set in $\bS$ and let $\bS'$ be the sequence for which $\bS=\bS'\odot(X)$.
Let $\bF'$ be the maximal flower that refines $\{X,\bar{X}\}$.
If $\ell=1$, then $\bF'$ and $(X)$ are the required flower and corresponding sequence.
Otherwise, by induction, there exists a set of maximal independent flowers $\bF_1,\ldots,\bF_r$ and there exists, 
for each $i\in[r]$, a w-sequence $\bS_i$ of $\bF_i$ such that
$H=\Wflip[G,\bS_1\odot\ldots\odot\bS_r\odot(X)]$.
Suppose that $\bF'=\bF_i$ for some $i\in[r]$. 
Because of Remark~\ref{conn:partialreorder}, we may assume that $\bF'=\bF_r$.
Then $\bF_1,\ldots,\bF_r$ and $\bS_1,\ldots,\bS_r\odot(X)$ are the required flowers and corresponding w-sequences.
Thus we may assume that $\bF'$ is distinct from $\bF_i$, for all $i\in[r]$.
Suppose that $\bF'$ is independent from $\bF_1,\ldots,\bF_r$. 
Then $\bF_1,\ldots,\bF_r,\bF'$ and $\bS_1,\ldots,\bS_r,(X)$ are the required flowers and corresponding w-sequences.
Hence, we may assume that for some $i\in[r]$, $\bF'$ and $\bF_i$ are not independent.
Because of Remark~\ref{conn:partialreorder}, we may assume that $\bF'$ and $\bF_r$ are not independent.
It follows from Lemma~\ref{conn:idpd-flower} that there exists a leaflet $\{B_1,B_2,B_3,B_4\}$ of $H':=\Wflip[G,\bS_1\odot\ldots\odot\bS_{r-1}]$,
where $\bF_r=\{B_1\cup B_2,B_3\cup B_4\}$ and $\bF'=\{B_1\cup B_3,B_2\cup B_4\}$. 
Hence $\bS_r=(B_1\cup B_2)$ and $X=B_1\cup B_3$. It follows that $\Wflip[H',\bS_r\odot(X)]=\Wflip[H',(B_2\cup B_3)]$.
Then $\bF_1,\ldots,\bF_r$ and $\bS_1,\ldots,\bS_{r-1},(B_2\cup B_3)$ are the required flowers and corresponding w-sequences.
\end{proof}
\noindent
Let $G$ be a graph and let $\bF=\{B_1,\ldots,B_t\}$ be a flower of $G$. 
If $H=\Wflip[G,(B_i)]$ for some $i\in[t]$, then we say that $H$ is obtained from $G$ by {\em reversing} petal $B_i$. 
We say that distinct petals $B_i$ and $B_j$ are {\em consecutive} in $\bF$ if $V_G(B_i)\cap V_G(B_j)\neq\emptyset$.
\begin{LE}\label{conn:4petals}
Let $\bF$ be a flower of a graph $G$ and let $B_1,B_2,B_3,B_4$ be petals of $\bF$.
We can find a non-crossing w-sequence $\bS$ of $\bF$ such that, for $H:=\Wflip[G,\bS]$,
both $B_1,B_2$ and $B_3,B_4$ are consecutive petals of $\bF$ in $H$.
\end{LE}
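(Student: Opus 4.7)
The plan is to analyze the cyclic arrangement of the four chosen petals $B_1, B_2, B_3, B_4$ in the flower $\bF$ and, in each case, exhibit an explicit non-crossing w-sequence of $\bF$ of length at most three. The key observations throughout are that a Whitney-flip on a union of consecutive petals of $\bF$ reverses the internal cyclic order of that arc, and that two members of a w-sequence whose edge sets are disjoint are automatically non-crossing.

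Up to rotation and reflection, the relative cyclic order of $B_1, B_2, B_3, B_4$ around $\bF$ is one of (i) $B_1, B_2, B_3, B_4$, (ii) $B_1, B_2, B_4, B_3$, or (iii) $B_1, B_3, B_2, B_4$. In cases (i) and (ii) the pairs $\{B_1, B_2\}$ and $\{B_3, B_4\}$ lie on complementary arcs of $\bF$; in case (iii) they interleave. I label the (possibly empty) arcs of petals strictly between consecutive ones of $B_1, B_2, B_3, B_4$ as $P_1, P_2, P_3, P_4$. In case (i), with cyclic order $B_1, P_1, B_2, P_2, B_3, P_3, B_4, P_4$, I take $X_1 := P_1 \cup B_2$ and $X_2 := P_3 \cup B_4$: the flip on $X_1$ brings $B_2$ adjacent to $B_1$, the flip on $X_2$ brings $B_4$ adjacent to $B_3$, each $X_i$ is a union of consecutive petals of $\bF$ in the graph at the moment it is flipped, and $X_1 \cap X_2 = \emptyset$. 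Case (ii) is handled by the analogous disjoint pair; if any relevant $P_i$ is empty, the corresponding flip is simply omitted.

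For the interleaved case (iii), with cyclic order $B_1, P_1, B_3, P_2, B_2, P_3, B_4, P_4$, I propose the three-element w-sequence
\[
X_1 := B_3 \cup P_2 \cup B_2, \qquad X_2 := B_1 \cup P_1, \qquad X_3 := P_3 \cup B_4.
\]
The flip on $X_1$ swaps $B_3$ and $B_2$, producing the cyclic order $B_1, P_1, B_2, \dots, B_3, P_3, B_4, P_4$. The flip on $X_2$ then reverses the arc $B_1 \cup P_1$, which moves $B_1$ adjacent to $B_2$ from the other side of the cycle. Finally, the flip on $X_3$ places $B_4$ adjacent to $B_3$. One verifies that each $X_i$ is a union of consecutive petals of $\bF$ in the current graph, and that the three edge sets involve disjoint collections of petals, so $X_1, X_2, X_3$ are pairwise disjoint and hence non-crossing.

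The main obstacle is the interleaved case. The naive two-flip attempt, $X_1' := P_1 \cup B_3 \cup P_2 \cup B_2$ together with $X_2' := P_1 \cup P_3 \cup B_4$, produces crossing sets whenever $P_1 \neq \emptyset$, since then $X_1' \cap X_2' = P_1$ and all four regions $X_1' \cap X_2'$, $X_1' \setminus X_2'$, $X_2' \setminus X_1'$, $\overline{X_1' \cup X_2'}$ are non-empty. The essential trick is to bring $B_1$ toward $B_2$ by flipping $B_1 \cup P_1$ from the other side of the cycle, rather than bringing $B_2$ toward $B_1$; this frees $P_1$ from the last flip and keeps all three reversed arcs disjoint.
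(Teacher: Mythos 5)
Your proof is correct and follows essentially the same strategy as the paper's: both rearrange the four distinguished petals by reversing pairwise disjoint (or nested) arcs of the flower, the paper packaging this as a coarsening of $\bF$ into a four-petal flower followed by a permutation and individual petal reversals, while you enumerate the three cyclic types explicitly and write down the flips. The sequences you exhibit are valid 2-separations of $\bF$ at each stage, their edge sets are pairwise disjoint and hence non-crossing, and your identification of the interleaved case as the only delicate one matches the substance of the paper's argument.
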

\begin{proof}
There exists a flower $\bF'=\{B'_1,B'_2,B'_3,B'_4\}$ such that:
\begin{itemize}
	\item $\bF$ is a refinement of $\bF'$;
	\item $B_i\subseteq B'_i$ for $i=1,2,3,4$;
	\item $\zB_G(B_i)\cap \zB_G(B'_i)\neq\emptyset$.
\end{itemize}
Since $\bF'$ has only $4$ petals, there is a non-crossing w-sequence $\bS'$ of $\bF'$ such that, for $H':=\Wflip[G,\bS']$, 
$B'_1,B'_2,B'_3,B'_4$ appear consecutively in $H'$. 
As $H$ can be obtained from $H'$ by possibly reversing some of the petals of $\bF'$, the result follows.
\end{proof}
%
%------------------------------------------------------------------------------------------------
\subsection{Proof of Propositions~\ref{conn:star2} and~\ref{conn:startbg}}\label{conn:proof1}
%------------------------------------------------------------------------------------------------
%
\begin{LE}\label{conn:rearrangeflower}
Let $\bF$ be a flower of $G$, let $\bL$ be a w-sequence for flower $\bF$, and let $H=\Wflip[G,\bL]$.
Consider $Z\subseteq V(G)$, where $|Z|\leq 2$. Then there exists a w-sequence $\bL'\odot\bL''$ of $G$ such that:
\begin{enumerate}[\;\;\;(1)]
	\item $H=\Wflip[G,\bL'\odot\bL'']$;
	\item $Z\cap\zB_G(X)=\emptyset$, for all $X\in\bL'$;
	\item $\bL''$ is non-crossing.
\end{enumerate}
\end{LE}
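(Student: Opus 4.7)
The plan is to exploit the cyclic structure of the flower $\bF=\{B_1,\ldots,B_t\}$ with attachments $u_1,\ldots,u_t$ in cyclic order. Every $2$-separation of $\bF$ (in $G$ or in any graph obtained by a w-sequence for $\bF$) is a union of consecutive petals, and its boundary consists of exactly two of the attachments $u_i$. Hence the condition $Z\cap \zB_G(X)=\emptyset$ depends only on $Z\cap\{u_1,\ldots,u_t\}$, a set of size at most two, and I will case-split on $|Z\cap\{u_1,\ldots,u_t\}|$.

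If $|Z\cap\{u_1,\ldots,u_t\}|=0$, every $2$-separation of $\bF$ has boundary disjoint from $Z$, so I set $\bL':=\bL$ and $\bL'':=\emptyset$ and the three conclusions hold trivially.

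If $|Z\cap\{u_1,\ldots,u_t\}|=1$, say with $u_s\in Z$, the key structural observation is that any two $2$-separations of $\bF$ whose boundaries both contain $u_s$ cannot cross: crossing $2$-separations of the flower require four distinct boundary attachments cyclically interleaved, which is impossible once $u_s$ is shared. Hence the family of ``bad'' flips (those with boundary meeting $Z$) is automatically non-crossing. I would then induct on $|\bL|$: if the first flip in $\bL$ avoids $Z$, prepend it to $\bL'$ and recurse on the remainder; otherwise, argue that we may commute the bad flip past subsequent flips or rewrite a short prefix of $\bL$ so as to move it to the end, yielding $\bL'\odot\bL''$ in which $\bL''$ consists exclusively of bad flips and is non-crossing by the observation above.

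If $|Z\cap\{u_1,\ldots,u_t\}|=2$, say with $\{u_s,u_r\}\subseteq Z$, bad flips can genuinely cross one another, since an arc with boundary $\{u_s,u_k\}$ and one with boundary $\{u_\ell,u_r\}$ may cyclically interleave. The strategy here is to apply Lemma~\ref{conn:4petals} to first bring the petals into a canonical position in which the petals lying on the $u_s$--$u_r$ arc (one way around the cycle) are grouped together, as are those on the complementary arc. Then the transformation induced by $\bL$ decomposes into a rearrangement that occurs entirely within each of these two groups (realizable by good flips, collected into $\bL'$) together with a small residual acting across $\{u_s,u_r\}$, which can be accomplished by a non-crossing family of flips placed in $\bL''$. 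The main obstacle is precisely this case: the bad flips are no longer a priori non-crossing, so one must exhibit a canonical non-crossing family that realizes the ``across-$Z$'' part of the permutation. I expect the bookkeeping, justified via Lemma~\ref{conn:4petals} and an induction on $t$ (or on the complexity of the petal permutation induced by $\bL$), to be the delicate step.
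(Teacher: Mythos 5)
Your overall picture of a flower as a cyclic arrangement of petals, with $2$-separations being arcs bounded by attachments, is the right one, and you correctly identify Lemma~\ref{conn:4petals} as the relevant tool. However, there are two genuine gaps. First, the ``key structural observation'' in your middle case is false as stated: only the \emph{first} set of $\bL$ is guaranteed to be an arc of $\bF$ in $G$. A later set $X_i$ is an arc of the flower in the \emph{intermediate} graph $\Wflip[G,(X_1,\ldots,X_{i-1})]$; as a subset of $E(G)$ it need not be consecutive in $G$'s cyclic order, its boundary $\zB_G(X_i)$ can contain up to four (or more) attachments, and two such sets can cross even when both boundaries contain $u_s$ (e.g.\ with petals $B_1,\ldots,B_5$, flip $B_1\cup B_2$ and then $B_1\cup B_3$, which is an arc of the new cyclic order; both have $u_1\in\zB_G(\cdot)$ and they cross). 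Second, and more importantly, the reordering step is unsupported: a ``bad'' flip can cross a ``good'' one, and w-sequences may only be reordered across independent (non-crossing) parts (Remark~\ref{conn:partialreorder}), so ``commute the bad flip past subsequent flips'' is exactly the step that fails in general, and ``rewrite a short prefix'' is the entire missing content. Your case of $|Z\cap\{u_1,\ldots,u_t\}|=2$ is explicitly deferred, so the proposal is incomplete there as well.

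The paper avoids all of this by never manipulating $\bL$ itself. The only thing that matters about $\bL$ is its net effect: any w-sequence for a flower merely permutes the petals and reverses some of them. One then builds a fresh factorization of $G\to H$. For each $z_i\in Z$ that is an attachment, let $B_i,B_i'$ be the two petals of which $z_i$ is the common attachment in $G$. Lemma~\ref{conn:4petals} yields a \emph{non-crossing} w-sequence $\bL''$ carrying $H$ to a graph $H'$ in which $B_1,B_1'$ and $B_2,B_2'$ are again consecutive with $z_i$ as their shared attachment. The rearrangement $G\to H'$ then respects the coarser flower $\bF'$ obtained by merging $B_i\cup B_i'$ into single petals; every $2$-separation of $\bF'$ has its boundary among the attachments of $\bF'$, which exclude $z_1,z_2$, so the corresponding w-sequence $\bL'$ satisfies (2). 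I suggest you replace your flip-by-flip induction with this global argument; your intended use of Lemma~\ref{conn:4petals} in the last case is close in spirit, but it must be applied to the pairs of petals meeting at the $z_i$ rather than to a grouping of the two arcs between $u_s$ and $u_r$.
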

\begin{proof}
We only consider the case where $Z=\{z_1,z_2\}$ and where both $z_1$ and $z_2$ are attachments of $\bF$ in $G$, as the other cases are similar.
For $i=1,2$, there exist consecutive petals $B_i$ and $B'_i$ in $G$ such that $z_i\in\zB_G(B_i)\cap \zB_G(B'_i)$.
Note that $H$ is obtained from $G$ by first permuting the petals of $\bF$ and then by reversing a subset of the petals.
Since the petals are $2$-separations that do not cross any $2$-separation of $\bF$, we may assume that $H$ is obtained from $G$ by only permuting the petals of $\bF$.
It follows from Lemma~\ref{conn:4petals} that there is a non-crossing w-sequence $\bL''$ of $H$ such that, in $H':=\Wflip[H,\bL'']$, $B_1,B'_1$ and $B_2,B'_2$ are consecutive.
Moreover, we can assume (by possibly reversing petals) that $z_i\in\zB_{H'}(B_i)\cap\zB_{H'}(B'_i)$, for $i=1,2$.
Let $\bF'$ be the flower obtained from $\bF$ by replacing, for $i=1,2$, petals $B_i$ and $B'_i$ by a unique petal $B_i\cup B'_i$.
Then let $\bL'$ be a w-sequence for flower $\bF'$ such that $\Wflip[G,\bL']=H'$.
\end{proof}
\noindent
We are now ready for the proof of the first main result.
\begin{proof}[\textbf{Proof of Proposition~\ref{conn:star2}}]
We say that a set of sequences $\bS_1,\bS_2,\bL$ satisfies property (P) if there exist graphs $H$ and $H'$, where $\bS_1$, $\bS_2$ and $\bL$ are w-sequences of $G$, $H'$ and $H$ respectively, and
\begin{enumerate}[\;\;\;(1')]
	\item $H=\Wflip[G,\bS_1]$, where $Z\cap\zB_G(X)=\emptyset$, for all $X\in \bS_1$;
	\item $H'=\Wflip[H,\bL]$;
	\item $G'=\Wflip[H',\bS_2]$ and $\bS_2$ is non-crossing.
\end{enumerate}
As we can choose $\bS_1=\bS_2=\emptyset$ and since $G$ and $G'$ are equivalent, a set of sequences $\bS_1,\bS_2,\bL$ satisfying property (P) exists. 
Lemma~\ref{conn:intoflowers} implies that there exist maximal independent flowers $\bF_1,\ldots,\bF_k$ and there exists, for all $i\in[k]$, a w-sequence $\bL_i$ for $\bF_i$ such that $H'=\Wflip[H,\bL_1\odot\cdots\odot\bL_k]$. 

Among all choices of $\bS_1,\bS_2,\bL_1,\ldots,\bL_k$ where $\bS_1,\bS_2,\bL_1\odot\cdots\odot\bL_k$ satisfy property (P), choose one that minimizes~$k$. 
Suppose $k>0$.
Apply Lemma~\ref{conn:rearrangeflower} to the sequence $\bL_1$ and let $\bL'_1$ and $\bL''_1$ 
correspond to $\bL'$ and $\bL''$ in the statement of the lemma.
Define,
\[
\hat{\bS}_1:=\bS_1\odot\bL_1'\quad\quad
\hat{\bS}_2:=\bL''_1\odot\bS_2.
\]
Since flowers $\bF_1,\ldots,\bF_k$ of $H$ are independent, $\bL''_1$ is independent from $\bL_2\odot\cdots\odot\bL_k$ (see Proposition~\ref{conn:idpd-flower}).
Therefore, by Remark~\ref{conn:partialreorder},
\[
G'=\Wflip[G,\hat{\bS}_1\odot\bL_2\odot\cdots\odot\bL_k\odot\hat{\bS}_2].
\]
Then $\hat{\bS}_1,\hat{\bS}_2,\bL_2\odot\cdots\odot\bL_k$ contradict our choice of $\bS_1,\bS_2,\bL_1,\bL_2$.
Thus $k=0$. 
Note that, if $Z\cap\zB_G(X)=\emptyset$ for some $X\in\bS_2$, then we can redefine $\bS_1$ to be 
$\bS_1\odot(X)$ and $\bS_2$ to be $\bS_2-\{X\}$.
Hence we may assume that $Z\cap\zB_G(X)\neq\emptyset$ for all $X\in\bS_2$ and the result follows.
\end{proof}
\begin{RE}\label{conn:nested}
Let $G$ be a graph, let $\bS$ be a non-crossing w-sequence of $G$, and let $G'=\Wflip[G,\bS]$.
Suppose that there exist $X_1,X_2,X_3\in\bS$, where $X_1\subset X_2\subset X_3$ and $X_1,X_2,X_3$ have distinct boundaries.
Suppose that for some vertex $z$ we have $z\in\zB_G(X_i)$, for $i=1,2,3$. 
Then $\zB_{G'}(X_1)\cap\zB_{G'}(X_2)\cap\zB_{G'}(X_3)=\emptyset$.
\end{RE}
\begin{proof}
Because of Remark~\ref{conn:laminar}, we may assume that $X_1,X_2,X_3$ appear first in $\bS$.
Let $H:=\Wflip[G,$ $(X_1,X_2,X_3)]$. Then $\zB_H(X_1)\cap\zB_H(X_2)\cap\zB_H(X_3)=\emptyset$.
The result now follows as $\bS$ is non-crossing.
\end{proof}
\begin{LE}\label{conn:makestar}
Let $H$ and $H'$ be equivalent graphs with $H'=\Wflip[H,\bS]$ for some non-crossing w-sequence $\bS$.
Suppose that there exist vertices $z$ in $V(H)$ and $z'$ in $V(H')$ such that $z \in \zB_H(X)$ and $z' \in \zB_{H'}(X)$
for every $X \in \bS$.
Then $H'=\Wflip[H,\bS']$ for some $\bS'$ which is a w-star of $H$ with center $z$ and a w-star of $H'$ with center $z'$.
\end{LE}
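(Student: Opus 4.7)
I will proceed by induction on $|\bS|$. The base cases $|\bS|\le 1$ are immediate, since an empty family and a single $2$-separation both constitute w-stars with the appropriate centres once condition~(c) is enforced (see below).

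For the inductive step, the decisive structural input comes from Remark~\ref{conn:nested}. Since $\bS$ is non-crossing and $\zB_H(X)=\zB_H(\bar X)$, and since $\Wflip[\cdot,(X)]=\Wflip[\cdot,(\bar X)]$, I may freely replace any $X\in\bS$ by $\bar X$ without altering either the composite flip or its boundaries in $H$ and $H'$. The family remains non-crossing under any such reorientation. Now, if there existed $X_1,X_2,X_3\in\bS$ (possibly after reorientation) forming a chain $X_1\subsetneq X_2\subsetneq X_3$ with three distinct boundaries, then Remark~\ref{conn:nested} would yield $\zB_{H'}(X_1)\cap\zB_{H'}(X_2)\cap\zB_{H'}(X_3)=\emptyset$, contradicting the hypothesis that $z'$ lies in all three of those boundaries in $H'$. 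Consequently, every inclusion chain in $\bS$ (after any reorientation) uses at most two distinct ``other'' boundary vertices $v_X$.

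This restriction drives the reduction. If two sets $X,Y\in\bS$ share the same boundary $\{z,v\}$, then a direct tracking of edge incidences at $z$ and $v$ shows that the composite flip $\Wflip[\cdot,(X,Y)]$ is either the identity or equals a single Whitney-flip on a $2$-separation also with boundary $\{z,v\}$; replacing the pair $\{X,Y\}$ in $\bS$ by this single set reduces $|\bS|$ while preserving every hypothesis, so the induction hypothesis applies. In the remaining case, all sets of $\bS$ have pairwise distinct $v_X$'s; combining this with the non-crossing condition and the forbidden three-chain, I argue that a consistent reorientation $X\mapsto\bar X$ renders all pairs pairwise disjoint. This is the main technical obstacle of the proof: any obstruction to a consistent reorientation would be witnessed by three sets whose compatibility constraints cannot be simultaneously satisfied, which---by inspection of the four possible non-crossing relations---translates into an inclusion chain of length three with distinct boundaries, ruled out above.

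Once the reduction yields a family $\bS'$ of pairwise disjoint $2$-separations each with $z$ in its $H$-boundary and with pairwise distinct second vertices, condition~(c) of the definition of a w-star is enforced by transferring any edge $zv_i$ that happens to lie in $X_i$ over to $\bar X_i$; this alters neither the boundary of $X_i$ nor the Whitney-flip it induces. The resulting family $\bS'$ is therefore a w-star of $H$ centred at $z$. Finally, the symmetric argument, carried out for the reversed flip sequence from $H'$ back to $H$ and using the hypothesis that $z'$ lies in every $\zB_{H'}(X)$, shows that the same $\bS'$ is simultaneously a w-star of $H'$ centred at $z'$, completing the proof.
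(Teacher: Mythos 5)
Your proposal is correct and follows essentially the same route as the paper's proof: both exploit the freedom to replace sets by their complements, merge sets with equal boundaries into a single flip (the symmetric difference), invoke Remark~\ref{conn:nested} to exclude nested chains of three distinct boundaries and thereby reach a pairwise-disjoint family, and finally delete any edge joining the two boundary vertices to secure condition~(c). The only difference is presentational (induction on $|\bS|$ versus a direct laminarization and normalization), not mathematical.
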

\begin{proof}
Note that we may swap any $X$ in $\bS$ with its complement and maintain the properties of $\bS$.
Since $\bS$ is non-crossing we may assume (after possibly replacing some sets $\bS$ by their complement) that $\bS$ is laminar,
i.e. every two sets in $\bS$ are either disjoint or one contains the other.
First suppose there exist $X_1,X_2 \in \bS$ with $\zB_H(X_1)=\zB_H(X_2)$.
Then we may remove $X_1,X_2$ from $\bS$ and add $X_1 \tr X_2$.
This keeps the w-sequence non-crossing and gives rise to the same graph $H'$.
Hence we may assume that, for every $X_1,X_2 \in \bS$, $\zB_H(X_1) \cap \zB_H(X_2) =\{z\}$
and $\zB_{H'}(X_1) \cap \zB_{H'}(X_2) =\{z'\}$, and condition (b) in the definition of w-star holds.
Suppose that for some $X_1,X_2\in\bS$ we have $X_1\subset X_2$. 
By Remark~\ref{conn:nested}, there is no set $X_3\in\bS$ where $X_3\supseteq X_2$ or $\bar{X}_3\supseteq X_2$. 
After replacing $X_2$ by $\bar{X}_2$ the sets in $\bS$ satisfy condition (a) of the definition of w-stars. 
Finally, if any $X\in\bS$ contains an edge $e$ where the ends of $e$ are $\zB_H(X)$,
we may replace $X$ by $X-\{e\}$. Then property (c) of w-stars holds.
\end{proof}
We are now ready for the proof of the second main result.
\begin{proof}[\textbf{Proof of Proposition~\ref{conn:startbg}}]
Proposition~\ref{conn:star2} implies that there exist a w-sequence $\bL$ of $G$ and a graph $H$ with a non-crossing sequence $\bS_0$ such that (1) holds in the statement of the proposition, $G'=\Wflip[H,\bS_0]$ and $z\in\zB_H(X)$ for all $X\in\bS_0$. Because of Remark~\ref{conn:laminar}, we can view $\bS_0$ as a set. Hence, $H=\Wflip[G',\bS_0]$. Let $\bL'=\{X\in\bS_0:z'\not\in\zB_{G'}(X)\}$ and let $\bS_1:=\bS_0-\bL'$. Let $H':=\Wflip[H,\bS_1]$. Then condition (2) in the statement of the proposition holds.
Finally, by Lemma~\ref{conn:makestar}, there exists a w-sequence $\bS$ for $H$ that is a w-star of $H$ with center $z$ and a w-star of $H'$ with center $z'$ and such that $H'=\Wflip[H,\bS]$.
\end{proof}
%

% end of file

\section{Proof of Theorem~\ref{split:char} - split siblings}\label{sec:proofs1}
%===================================================================================================
Recall the definition of split-templates given at the end of Section~\ref{sec:Shih}.
We say that split-templates:
\begin{equation}\label{split:eq1}
\dst=(H_1,v_1,\alpha_1,H_2,v_2,\alpha_2,\bS)
\quad\mbox{and}\quad
\dst'=(H'_1,v_1',\alpha_1',H'_2,v'_2,\alpha'_2,\bS')
\end{equation}
are {\em compatible} if: 
\begin{enumerate}[\;\;\;(a)]
	\item $H_i$ and $H'_i$ are equivalent, for $i=1,2$, and
	\item $\alpha_i\tr\alpha'_i$ forms a cut of $H_1$, for $i=1,2$.
\end{enumerate}
Note that in this case, by Theorem~\ref{intro:whitney}, $\cut(H_1)=\cut(H_2)=\cut(H'_1)=\cut(H'_2)$.
\begin{LE}\label{split:eqvtemplates}
Let $\dst$ and $\dst'$ be compatible split-templates.
Let $(G_1,\Sigma_1)$ and $(G_2,\Sigma_2)$ be the siblings arising from $\dst$ and
$(G'_1,\Sigma'_1)$, $(G'_2,\Sigma'_2)$ be the siblings arising from $\dst'$.
Then, for $i=1,2$, $(G_i,\Sigma_i)$ and $(G'_i,\Sigma'_i)$ are equivalent.
\end{LE}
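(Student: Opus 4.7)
The plan is, for each $i\in\{1,2\}$, to establish separately that $G_i$ and $G'_i$ are equivalent graphs and that the matching signatures $\Sigma_i$ and $\Sigma'_i$ (chosen via the freedom of Remark~\ref{iso:alphasign}) differ by a signature exchange.

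For the graph equivalence I will use the matroid reformulation of splitting. Since $G_i$ arises from $H_i$ by splitting $v_i$ according to $\alpha_i$, the paragraph preceding Remark~\ref{intro:projection} gives $\cycle(G_i)=\ecycle(H_i,\alpha_i)$, and similarly $\cycle(G'_i)=\ecycle(H'_i,\alpha'_i)$. Compatibility condition (a) makes $H_i$ and $H'_i$ equivalent, so they share the same edge set and the same cycles, yielding $\ecycle(H'_i,\alpha'_i)=\ecycle(H_i,\alpha'_i)$. Since the split-template setup forces $H_1$ and $H_2$ to be equivalent, we have $\cut(H_i)=\cut(H_1)$, and compatibility~(b) tells us $\alpha_i\tr\alpha'_i$ is a cut of $H_1$, hence of $H_i$. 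Thus $\alpha'_i$ is a signature of $(H_i,\alpha_i)$, giving $\ecycle(H_i,\alpha_i)=\ecycle(H_i,\alpha'_i)$. Chaining the equalities yields $\cycle(G_i)=\cycle(G'_i)$, and Whitney's theorem (Theorem~\ref{intro:whitney}) concludes that $G_i$ and $G'_i$ are equivalent.

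For the signature part I will invoke Remark~\ref{iso:alphasign} to take $\Sigma_1=\Sigma_2=\alpha_1\tr\alpha_2$ and $\Sigma'_1=\Sigma'_2=\alpha'_1\tr\alpha'_2$. Then
\[
\Sigma_i\tr\Sigma'_i \;=\; (\alpha_1\tr\alpha'_1)\tr(\alpha_2\tr\alpha'_2),
\]
which by compatibility~(b) is a symmetric difference of two cuts of $H_1$, hence itself a cut of $H_1$ and therefore of $H_i$. Using $\ecut(G_i,T_i)=\cut(H_i)$, every cut of $H_i$ is also a cut of $G_i$: concretely, $\delta_{H_i}(U)$ with $v_i\in U$ equals $\delta_{G_i}((U-\{v_i\})\cup T_i)$. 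Hence $\Sigma_i\tr\Sigma'_i$ is a cut of $G_i$, so $\Sigma_i$ and $\Sigma'_i$ differ by a signature exchange; combined with the graph equivalence of $G_i$ and $G'_i$, this gives that $(G_i,\Sigma_i)$ and $(G'_i,\Sigma'_i)$ are equivalent signed graphs. The whole argument is essentially bookkeeping across Whitney-equivalences; the one subtlety to watch is keeping $\alpha'_i$ and $\Sigma'_i$ on a common edge set and correctly transporting cuts through the splitting, which works because Whitney-flips leave the edge set untouched.
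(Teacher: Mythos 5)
Your proof is correct and follows essentially the same route as the paper's: the paper shows $\cut(G_i)=\spa\bigl(\cut(H_1)\cup\{\alpha_i\}\bigr)=\spa\bigl(\cut(H_1)\cup\{\alpha'_i\}\bigr)=\cut(G'_i)$ and applies Whitney's theorem, which is just the dual phrasing of your chain $\cycle(G_i)=\ecycle(H_i,\alpha_i)=\ecycle(H_i,\alpha'_i)=\cycle(G'_i)$. The only cosmetic difference is in the signature step, where the paper invokes uniqueness of the matching signature pair directly while you verify via Remark~\ref{iso:alphasign} that $\Sigma_i\tr\Sigma'_i$ is a cut; both are valid.
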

\begin{proof}
Let us assume that $\dst$ and $\dst'$ are as described in~\eqref{split:eq1}.
Then, by construction, 
\[
\cut(G_1)=\spa\bigl(\cut(H_1)\cup\{\alpha_1\}\bigr)
\quad\mbox{and}\quad
\cut(G'_1)=\spa\bigl(\cut(H_1)\cup\{\alpha'_1\}\bigr).
\]
By hypothesis, $\alpha_1\tr\alpha'_1\in\cut(H_1)$.
Hence, $\cut(G_1)=\cut(G'_1)$.
It follows from Theorem~\ref{intro:whitney} that $G_1$ and $G'_1$ are equivalent.
Similarly, $G_2$ and $G'_2$ are equivalent.
It follows that $(G'_1,\Sigma_1)$ and $(G'_2,\Sigma_2)$ are siblings.
As the matching signature pair for $G'_1$ and $G'_2$ is unique up to signature exchange,
$(G_i,\Sigma_i)$ and $(G'_i,\Sigma'_i)$ are equivalent, for $i=1,2$.
\end{proof}
\begin{LE}\label{split:templatekey}
Every split-template has a compatible split-template which is simple or nova.
\end{LE}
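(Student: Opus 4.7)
The approach is to first use the connectivity machinery of Section~\ref{sec:whitney} to arrange for a w-star structure, and then to modify the signatures (within the compatibility class) so as to enforce the handcuff condition (N2). Throughout, I use the fact that compatibility is an equivalence relation on split-templates, so it suffices to exhibit any compatible template in the same class that is simple or nova.

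Starting with $\dst = (H_1, v_1, \alpha_1, H_2, v_2, \alpha_2, \bS)$, I would apply Proposition~\ref{conn:startbg} to the equivalent pair $H_1, H_2$ with distinguished vertices $v_1, v_2$. This produces w-sequences $\bL$ of $H_1$ and $\bL'$ of $H_2$ whose members all have boundary avoiding $v_1$ and $v_2$ respectively, together with a sequence $\bS^*$ that is a w-star with center $v_1$ in $\hat H_1 := \Wflip[H_1,\bL]$ and a w-star with center $v_2$ in $\hat H_2 := \Wflip[H_2,\bL']$, and such that $\hat H_2 = \Wflip[\hat H_1, \bS^*]$. Because no Whitney-flip in $\bL$ uses $v_1$, we have $\delta_{\hat H_1}(v_1) = \delta_{H_1}(v_1)$, and similarly for $v_2$; hence $\alpha_i \subseteq \delta_{\hat H_i}(v_i) \cup \loops(\hat H_i)$ is preserved. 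The tuple $\hat\dst := (\hat H_1, v_1, \alpha_1, \hat H_2, v_2, \alpha_2, \bS^*)$ is therefore a split-template, and it is compatible with $\dst$ because each $\hat H_i$ is equivalent to $H_i$ and $\alpha_i \tr \alpha_i = \emptyset$ is trivially a cut. If $\bS^* = \emptyset$, then $\hat H_1 = \hat H_2$, and $\hat\dst$ is simple, completing the proof in that case.

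Otherwise $\bS^*$ is a nonempty w-star with the required centers, so condition (N1) of a nova template holds for $\hat\dst$. It remains to enforce (N2). Suppose some petal $X \in \bS^*$ with boundary $\{v_1, u\}$ in $\hat H_1$ contains a subset $X' \subseteq X$ with $\zB_{\hat H_1}(X') = \{v_1, u\}$ that is not a $\{v_1,u\}$-handcuff separation of $(\hat H_1, \alpha_1 \tr \alpha_2)$. Then one of $v_1, u$ is a blocking vertex of the signed subgraph $(\hat H_1[X'], (\alpha_1 \tr \alpha_2) \cap X')$; equivalently, there is a cut $D$ of $\hat H_1[X']$ with $(\alpha_1 \tr \alpha_2) \cap X' \tr D \subseteq \delta_{\hat H_1[X']}(v_1) \cup \loops$ (if $v_1$ is blocking) or contained in $\delta_{\hat H_1[X']}(u)$ (if $u$ is blocking). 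When the corresponding cut extends to a cut of $\hat H_1$ (i.e., it can be written as $\delta_{\hat H_1}(U)$ with $U \cap \{v_1,u\} = \emptyset$), I would modify $\alpha_1$ (and symmetrically $\alpha_2$, using that $D$ is also a cut of $\hat H_2$ by Theorem~\ref{intro:whitneyCut}) by XOR-ing with $D$; this yields a compatible template where the offending signature on $X'$ is absorbed into $\delta(v_i)$, reducing a suitable violation counter. When instead $D$ necessarily separates $v_1$ from $u$, I would further refine the w-star $\bS^*$ by detaching the part of $X'$ that lies on one side of $D$ as a new petal whose boundary still contains the center $v_1$; reapplying Proposition~\ref{conn:startbg} then restores (N1) while strictly decreasing the number of petals violating (N2). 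Iterating produces a compatible template satisfying both (N1) and (N2), that is, a nova template.

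The principal obstacle is the last paragraph: verifying that the cut used for the signature exchange inside $X'$ can always either be extended globally to a cut of $\hat H_1$ (and simultaneously of $\hat H_2$) without moving edges outside $X'$, or else be absorbed by a refinement of the w-star that preserves its center. The symmetric roles of $H_1^*$ and $H_2^*$, together with the fact that $\bS^*$ is simultaneously a w-star in both graphs with the same petals, should make this refinement local to the petal $X$. A careful choice of potential function — for example, the number of pairs $(X, X')$ with $X \in \bS^*$, $X' \subseteq X$, $\zB(X') = \zB(X)$, for which handcuffs fail — decreases at each step and guarantees termination.
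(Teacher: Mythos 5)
Your first step is sound and matches the paper: applying Proposition~\ref{conn:startbg} to $H_1,H_2$ with the vertices $v_1,v_2$ yields a compatible template whose w-sequence is simultaneously a w-star centred at $v_1$ in one graph and at $v_2$ in the other, and the empty-star case gives a simple template. The gap is in your treatment of (N2). The central problem is that a signature exchange cannot repair a failed handcuff condition: replacing $\alpha_1$ by $\alpha_1\tr D$ for a cut $D$ changes $\alpha_1\tr\alpha_2$ by a cut, and every circuit meets a cut evenly, so the set of odd circuits of $(\hat H_1,\alpha_1\tr\alpha_2)$ --- and hence the existence or non-existence of $\{v_1,u\}$-handcuffs inside $X'$ --- is invariant under your ``XOR with $D$'' move. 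So the first branch of your case analysis makes no progress, and your proposed potential function need not decrease. Your second branch (detaching part of $X'$ as a new petal) also cannot help by itself, since the new petal would still have to satisfy (N2) and the parities are again unchanged.

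What is actually needed, and what the paper does, is to set up an extremal choice \emph{before} attacking (N2): among all compatible templates whose w-sequence is a w-star centred at $v_1'$ and $v_2'$, take one minimizing $|\bigcup\{X:X\in\bS'\}|$. If (N2) fails for some inclusion-wise minimal $X'\subseteq X$ with $\zB(X')=\zB(X)=\{v_1',w\}$, one shows (their Claim~2/3) that the failure forces one of three degenerate configurations, and in each of these one can perform a Whitney-flip \emph{on $X'$ itself}, together with a signature exchange by $\emptyset$, $\delta(v_1')$ or $\delta(\zI(Z))$ so that the new $\alpha$ is again contained in the star of a single vertex; this removes $X'$ from the w-star and strictly shrinks the minimized quantity, a contradiction. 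The Whitney-flip, not the signature exchange, is what changes the situation. Two further points you omit: your identification of ``handcuffs fail'' with ``$v_1$ or $u$ is a blocking vertex of the signed piece'' is not the correct negation (one needs, for each of $v_1$ and $u$, an odd circuit through it avoiding the other --- which is where $\alpha_1\subseteq\delta(v_1')$ and $\alpha_2\subseteq\delta(w)$ enter); and when the two odd circuits exist but are vertex-disjoint, the connecting path required by the definition of handcuffs is obtained from the inclusion-wise minimality of $X'$, a case your argument does not address.
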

\begin{proof}
Suppose that $\dst:=(H_1,v_1,\alpha_1,H_2,v_2,\alpha_2,\bS)$ is a split-template.
\begin{claim}
There is a template $(H'_1,v_1,\alpha_1,H'_2,v_2,\alpha_2,\bS')$ which is compatible 
with $\dst$ and has the property that $\bS'$ is a w-star of $H'_1$ and $H'_2$.
\end{claim}
\begin{cproof}
The proof follows easily from Proposition~\ref{conn:startbg},
since $H_1$ and $H_2$ are equivalent.
\end{cproof}
\noindent
Choose a split-template $\dst'=(H'_1,v'_1,\alpha'_1,H'_2,v'_2,\alpha'_2,\bS')$ with the following properties:
\begin{enumerate}[\;\;\;(M1)]
	\item $\dst'$ is compatible with $\dst$;
	\item for $i=1,2$, $\bS'$ is a w-star of $H'_i$ with center $v'_i$;
	\item $|\bigcup\{X: X\in \bS'\}|$ is minimized among all split-templates satisfying (M1) and (M2).
\end{enumerate}
Such a split-template exists because of Claim~1. 
We may assume that $\bS'\neq\emptyset$ for otherwise $\dst'$ is simple and we are done. 
We will show that $\dst'$ is nova. As (N1) (from the definition of nova) holds, it suffices to prove (N2).
Let $X'\subseteq X\in \bS'$, where 
$\zB_{H'_1}(X')=\zB_{H'_1}(X)=\{v'_1,w\}$, for some vertex $w$.
Let us assume that we chose $X'$ to be an inclusion-wise minimal subset with that property.
It suffices to show for (N2) (as we can interchange the role of $H'_1$ and $H'_2$) 
that there exist $\{v'_1,w\}$-handcuffs included in $X'$ in $(H'_1,\alpha'_1\tr\alpha'_2)$.
\begin{claim}
None of the following holds:
\begin{enumerate}[\;\;\;(1)]
	\item $\delta_{H'_1}(v'_1) \cap X' \cap \alpha_1'$ is empty; 
	\item $(\delta_{H'_1}(v'_1) \cap X') - \alpha_1'$ is empty;
	\item we can partition $X'$ into $Z$ and $Z'$ such that
		$\zB_{H'_1}(X')=\zB_{H'_1}(Z)=\zB_{H'_1}(Z')$ and $\alpha'_1\cap X'=\delta_{H_1'}(v_1')\cap Z$.
\end{enumerate}
\end{claim}
\begin{cproof}
Define
\[
D:=
\begin{cases} 
\emptyset & \text{if (1) holds}\\
\delta_{H'_1}(v'_1) & \text{if (2) holds}\\
\delta_{H'_1}(\zI_{H_1'}(Z)) & \text{if (3) holds.}
\end{cases} 
\]

\vspace{0.1in}\noindent
Let $\tilde{\alpha}=\alpha'_1\tr D$, let $\tilde{H}:=\Wflip[H'_1,(X')]$ and let 
$\tilde{\bS}=\bS'-\{X\} \cup \{X-X'\}$.
There is a vertex $\tilde{v}$ of $\tilde{H}$ where 
$\delta_{\tilde{H}}(\tilde{v})\supseteq\tilde{\alpha}$.
Since $\bS$ is non-crossing, $H'_2=\Wflip[\tilde{H},\tilde{\bS}]$.
Hence, (M2) holds for $\tilde{\dst}:=(\tilde{H},\tilde{v},\tilde{\alpha},$ $H'_2,v'_2,\alpha'_2,\tilde{\bS})$.
Since $D$ is a cut of $H'_1$, (M3) holds for $\tilde{\dst}$.
As $|\bigcup\{X:X\in \tilde{S}\}|<|\bigcup\{X:X\in \bS'\}|$, this contradicts our choice (M3).
\end{cproof}
\begin{claim}
There exists a circuit $C\subseteq X'$ of $H'_1$ avoiding $w$ with $|C\cap\alpha'_1|$ odd.
\end{claim}
\begin{cproof}
We claim that otherwise (1), (2), or (3) of Claim~2 must hold, giving a contradiction.
Let $G$ be the graph obtained from $H'_1[X']$ by splitting $v'_1$ into $v'^+_1,v'^-_1$ according to $\alpha'_1$. 
Every $(v'^-_1,v'^+_1)$-path $P$ of $H[X']$ avoiding $w$ is a required circuit. 
Hence, we may assume that no such path exists. 
It follows that $w$ is a cut-vertex separating $v'^-_1$ and $v'^+_1$ in $G[X']$. 
Let $Z,Z'$ be the partition of $X'$ such that $V_{G[X']}(Z) \cap V_{G[X']}(Z')=\{w\}$ and
$v'^-_1\in G[Z]$, $v'^+_1\in G[Z']$. Then (3) holds.
\end{cproof}
\noindent
By Claim~3 and by reversing the role of $H'_1$ and $H'_2$, we deduce that 
there exists an odd circuit $C_1$ (respectively $C_2$) included in $X'$ using $v'_1$ (respectively $w$) 
and avoiding $w$ (respectively $v'_1$). 
Consider first the case where $C_1$ and $C_2$ have at least one common vertex in $H'_1$.
As $\alpha'_1\subseteq\delta_{H'_1}(v'_1)$ and $\alpha'_2\subseteq\delta_{H'_1}(w)$,
we may assume, after possibly redefining $C_1$, that $C_1$ and $C_2$ intersect in exactly one
vertex or intersect in a path. Hence, in that case $(C_1,C_2,\emptyset)$ form $\{v'_1,w\}$-handcuffs
included in $X'$ in $(H'_1,\alpha'_1\tr\alpha'_2)$, as required.
Consider now the case where $C_1$ and $C_2$ have no common vertex in $H'_1$.
As $X'$ was selected to be inclusion-wise minimal, there exists a path $P\subset X'$ joining $C_1$ and $C_2$ which avoids $v'_1$ and $w$. 
For an inclusion-wise minimal such $P$, $(C_1,C_2,P)$ form $\{v'_1,w\}$-handcuffs in $(H'_1,\alpha'_1\tr\alpha'_2)$ as required.
\end{proof}
\begin{proof}[\textbf{Proof of Theorem~\ref{split:char}}]
By definition, $(G_1,\Sigma_1)$ and $(G_2,\Sigma_2)$ arise from a split-template 
$(H_1,v_1,\alpha_1,$ $H_2,v_2,\alpha_2)$. 
Let $T_1,T_2$ be the matching terminal pair for $G_1$ and $G_2$.
Proposition~\ref{connectivity:n3c} implies that $G_1$ and $G_2$ are $2$-connected, except for possible loops.
Consider first the case where $H_1 \setminus \loops(H_1)$ is not $2$-connected.
Then for some $X\subseteq E(G_1)$, $\zB_{G_1}(X)=T_1$.
It follows, from the argument in Section~\ref{sec:split:reduction}, that $(G_1,\Sigma_1)$ and $(G_2,\Sigma_2)$ can be reduced. 
Hence we may assume that $H_1$ is $2$-connected, except for possible loops, and so is $H_2$.
It follows that $\dst=(H_1,v_1,\alpha_1,H_2,v_2,\alpha_2,\bS)$ is a split-template for some w-sequence $\bS$ of $H_1$,
where $H_2=\Wflip[H_1,\bS]$. 
Lemma~\ref{split:templatekey} implies that there exists a split-template $\dst'$ which is simple or nova and compatible with $\dst$. 
Let $(G'_1,\Sigma'_1)$ and $(G'_2,\Sigma'_2)$ arise from $\dst'$.
By definition $(G'_1,\Sigma'_1)$ and $(G'_2,\Sigma'_2)$ are simple twins or nova twins. 
By Lemma~\ref{split:eqvtemplates}, for $i=1,2$, $(G'_i,\Sigma'_i)$ is equivalent to $(G_i,\Sigma_i)$.
It follows that $(G_1,\Sigma_1)$ and $(G_2,\Sigma_2)$ are simple or nova siblings.
\end{proof}
\section{Proof of Theorem~\ref{doublesplit:char} - quad siblings}\label{sec:proofs2}
%===================================================================================================
Similarly to the proof for split siblings, we define compatible quad-templates.
The different types of quad siblings arise from different types of templates.
%
%----------------------------------------------------------------------
\subsection{Templates}
%----------------------------------------------------------------------
%
\begin{RE}\label{iso:alphabetasign}
Suppose that $\dst=(H_1,v_1,w_1,\alpha_1,\beta_1,H_2,v_2,w_2,\alpha_2,\beta_2)$ is a quad-template and $(G_1,\Sigma_1)$ and
$(G_2,\Sigma_2)$ are the quad siblings arising from $\dst$. 
Then $\alpha_{3-i}$ and $\beta_{3-i}$ are signatures of $(G_i,\Sigma_i)$, for $i=1,2$.
\end{RE}
\begin{proof}
For $i=1,2$, vertex $v_i$ of $H_i$ gets split into vertices $v^-_i,v^+_i$ of $G_i$.
By construction, $\alpha_i=\delta_{G_i}(v^-_i)$, for $i=1,2$.
As $v^-_1\in T_1$, Theorem~\ref{isopair-co} implies that $\alpha_1$ is a signature of $(G_2,\Sigma_2)$. 
Similarly $\beta_1$ is a signature of $(G_2,\Sigma_2)$.
By symmetry, $\alpha_2$ and $\beta_2$ are signatures of $(G_1,\Sigma_1)$.
\end{proof}
We say that two quad-templates:
\begin{eqnarray}
& \dst=(H_1,v_1,w_1,\alpha_1,\beta_1,H_2,v_2,w_2,\alpha_2,\beta_2) & \nonumber\\
& \mbox{and} & \label{doublesplit:eq1}\\
& \dst'=(H_1',v_1',w_1',\alpha_1',\beta_1',H_2',v_2',w_2',\alpha_2',\beta_2') & \nonumber
\end{eqnarray}
are {\em compatible} if, for $i =1,2$:
\begin{enumerate}[\;\;\;(a)]
	\item $H_i$ is equivalent to $H_i'$;
	\item $\alpha_i \Delta \alpha_i'$ is a cut of $H_1$;
	\item $\beta_i \Delta \beta_i'$ is a cut of $H_1$.
\end{enumerate}
Note that, by Theorem~\ref{intro:whitney}, $\cut(H_1)=\cut(H_2)=\cut(H'_1)=\cut(H'_2)$.
\begin{LE}\label{doublesplit:le1}
Let $\dst$ and $\dst'$ be compatible quad-templates.
Let $(G_1,\Sigma_1)$, $(G_2,\Sigma_2)$ and $(G_1',\Sigma_1')$, $(G_2',\Sigma_2')$
be quad siblings arising from $\dst$ and $\dst'$ respectively.
Then $(G_i,\Sigma_i)$ and $(G_i',\Sigma_i')$ are equivalent, for $i=1,2$.
\end{LE}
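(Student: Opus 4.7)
The plan is to mirror the proof of Lemma~\ref{split:eqvtemplates}, with the key modification that an unfolding introduces \emph{two} new cuts ($\alpha_i$ and $\beta_i$) rather than the single cut produced by a split. I would first establish the identity
\[
\cut(G_i)=\spa\bigl(\cut(H_i)\cup\{\alpha_i,\beta_i\}\bigr),
\]
for $i=1,2$, and analogously for the primed objects. This identity comes directly from the definition of unfolding: splitting $v_i$ according to $\alpha_i$ and $w_i$ according to $\beta_i$ has the effect that a cycle of $G_i$ is precisely a cycle of $H_i$ whose intersection with each of $\alpha_i$ and $\beta_i$ has even parity. Dualizing this characterization of $\cycle(G_i)$ inside $\mathbb{F}_2^{E(H_i)}$ gives exactly the displayed formula for $\cut(G_i)$.

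Next I would exploit compatibility. By the remark immediately following the definition of compatible quad-templates, $\cut(H_i)=\cut(H_i')$. Combined with conditions (b) and (c) of compatibility, $\alpha_i\tr\alpha_i'$ and $\beta_i\tr\beta_i'$ both lie in $\cut(H_i)$, so the span in the displayed identity is unchanged if $\alpha_i,\beta_i$ are replaced by $\alpha_i',\beta_i'$. This yields $\cut(G_i)=\cut(G_i')$, and then Theorem~\ref{intro:whitneyCut} gives that $G_i$ and $G_i'$ are equivalent graphs.

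Finally, I would promote this graph-equivalence to signed-graph equivalence. Because $G_1\equiv G_1'$ and $G_2\equiv G_2'$, the cycle spaces are preserved, so
\[
\ecycle(G_1',\Sigma_1)=\ecycle(G_1,\Sigma_1)=\ecycle(G_2,\Sigma_2)=\ecycle(G_2',\Sigma_2).
\]
Thus $\Sigma_1,\Sigma_2$ is a matching signature pair for $G_1',G_2'$, and so is $\Sigma_1',\Sigma_2'$. The uniqueness statement in Theorem~\ref{isopair-co} then forces $\Sigma_i\tr\Sigma_i'$ to be a cut of $G_i'$, so $\Sigma_i$ and $\Sigma_i'$ differ only by a signature exchange. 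Together with $G_i\equiv G_i'$, this is exactly the definition of equivalence of $(G_i,\Sigma_i)$ and $(G_i',\Sigma_i')$.

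The only real work is verifying the cut-space identity for $G_i$; once that is in place the argument is essentially linear algebra over $\mathbb{F}_2$ together with Whitney's theorem and the uniqueness of matching signature pairs. The main obstacle is therefore purely bookkeeping: one must carefully track what happens to loops and to edges of $\Gamma_i=\alpha_i\tr\beta_i$ that may lie in $\alpha_i\cap\beta_i$ under the splits at $v_i$ and $w_i$, in order to confirm that $\alpha_i$ and $\beta_i$ really are cuts of $G_i$ and that no other new cuts appear beyond their span with $\cut(H_i)$.
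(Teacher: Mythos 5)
Your proposal is correct and follows essentially the same route as the paper: the paper also reduces the lemma to showing $\cut(G_i)=\cut(G_i')$ (it verifies this vertex cut by vertex cut, noting $\delta_{G_i}(v_i^-)=\alpha_i$, $\delta_{G_i}(w_i^-)=\beta_i$, etc., which amounts to your span identity $\cut(G_i)=\spa\bigl(\cut(H_i)\cup\{\alpha_i,\beta_i\}\bigr)$), then invokes Whitney's theorem and the uniqueness of the matching signature pair exactly as you do. The bookkeeping you flag about loops works out because a loop of $H_i$ in $\alpha_i$ becomes a $(v_i^-,v_i^+)$-edge of $G_i$, so $\delta_{G_i}(v_i^-)=\alpha_i$ holds verbatim.
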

\begin{proof}
Let $\dst$ and $\dst'$ be compatible quad-templates defined as in~\eqref{doublesplit:eq1}.
Fix $i \in [2]$.
Let $v_i^-,v_i^+ \in V(G_i)$ be obtained by splitting $v_i$ according to $\alpha_i$ in $H_i$.
We first show that $G_i$ is equivalent to $G_i'$ by showing that $\cut(G_i)=\cut(G_i')$.
Let $C:=\alpha_i \Delta \alpha_i'$.
By definition of compatible templates, $C$ is a cut of $H_i'$, hence it is a cut of $G_i'$.
Moreover, by construction, $\delta_{G_i}(v_i^-)=\alpha_i$. Thus $\delta_{G_i}(v_i^-)=C \Delta \alpha_i'$
is a cut of $G_i'$. Similarly, we can show that $\delta_{G_i}(v_i^+)$ is a cut of $G_i'$.
By symmetry between $v_i$ and $w_i$, we have that $\delta_{G_i}(w_i^-)$ and $\delta_{G_i}(w_i^+)$ are cuts of $G_i'$.
Moreover, for every $u \in V(G_i)$, if $u \notin \{v_i^-,v_i^+,w_i^-,w_i^+\}$, then $\delta_{G_i}(u)$
is a cut of $H_i$, hence a cut of $H_i'$ and a cut of $G_i'$.
Thus $\delta_{G_i}(u)$ is a cut of $G_i'$ for every $u \in V(G_i)$. 
As the cuts of $G_i$ are generated by its fundamental cuts (i.e. the cuts of the form $\delta_{G_i}(u)$, for $u \in V(G_i)$), 
this shows that $\cut(G_i) \subseteq \cut(G_i')$. 
By symmetry between $\dst$ and $\dst'$, we have that $\cut(G_i')=\cut(G_i)$, thus $G_i$ and $G_i'$ are equivalent.
As $G_i$ is equivalent to $G_i'$, $\Sigma_1,\Sigma_2$ is a matching signature pair for $G_1',G_2'$.
By Theorem~\ref{isopair-co}, the matching signature pair is unique up to resigning,
thus $(G_i,\Sigma_i)$ and $(G_i',\Sigma_i')$ are equivalent.
\end{proof}
Let $(H_1,v_1,w_1,\alpha_1,\beta_1,H_2,v_2,w_2,\alpha_2,\beta_2)$ be a quad-template.
If $H_1$ and $H_2$ are $2$-connected, except for possible loops, we have that $H_2=\Wflip[H_1,\bS]$ for some w-sequence $\bS$.
We abuse terminology slightly and say that $(H_1,v_1,w_1,\alpha_1,\beta_1,H_2,v_2,w_2,\alpha_2,\beta_2,\bS)$ is a {\em quad-template}.
This is only defined for the case where $H_1$ and $H_2$ are $2$-connected up to loops.
Thus, when specifying a w-sequence $\bS$ for a quad-template, we will implicitly assume
that graphs are connected, except for possible loops.

Consider a template $\dst=(H_1,v_1,w_1,\alpha_1,\beta_1,H_2,v_2,w_2,\alpha_2,\beta_2,\bS)$, 
where $\bS=(X_1,\ldots$, $X_k)$ for some $k \geq 0$ and $X_i \neq \emptyset$, for every $i \in [k]$.
We say that $\dst$ is of {\em type I} if:
\begin{enumerate}[\;\;\;({TI}a)]
	\item $X_i \cap X_j = \emptyset$, for all distinct $i,j \in [k]$;
	\item $H_i[X_j] \setminus \zB_{H_i}(X_j)$ is non-empty and connected, for every $i=1,2$ and $j\in [k]$;
	\item $\zB_{H_i}(X_j)=\{v_i,w_i\}$, for $i=1,2$ and $j \in [k]$.
\end{enumerate}

\noindent
We say that $\dst$ is of {\em type II} if:
\begin{enumerate}[\;\;\;({TII}a)]
	\item $k=1$ or $k=2$;
	\item if $k=2$, $X_1$ is disjoint from $X_2$;
	\item $v_i \in  \zB_{H_i}(X_j)$, for $i=1,2$ and $j \in [k]$;
	\item $w_1 \in \zI_{H_1}(X_1)$;
	\item if $k=1$, $w_2 \in \zI_{H_2}(\bar{X}_1 - \loops(H_2))$;
	\item if $k=2$, $w_2 \in \zI_{H_2}(X_2)$.
\end{enumerate}
%
%----------------------------------------------------------------------
\subsection{The proof}
%----------------------------------------------------------------------
A signed graph $(G,\Sigma)$ is {\em ec-standard} if $\ecycle(G,\Sigma)$ is $3$-connected
and, for every $(G',\Sigma')$ equivalent to $(G,\Sigma)$, $(G',\Sigma')$ does not contain a blocking vertex.
To prove Theorem~\ref{doublesplit:char} we require the following four results, 
which will be proved at the end of the chapter.
\begin{LE}\label{doublesplit:le2}
Suppose that $(G_1,\Sigma_1)$ and $(G_2,\Sigma_2)$ are quad siblings arising from a quad-template $\dst$ of type I. 
Suppose that $\ecycle(G_1,\Sigma_1)$ is $3$-connected.
Then $(G_1,\Sigma_1)$ and $(G_2,\Sigma_2)$ are either shuffle, tilt or twist twins.
\end{LE}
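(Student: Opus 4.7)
The plan is to extract from the type~I structure of $\dst$ a canonical partition of $E(G_i)$ indexed by the four terminals obtained from unfolding, and then to directly match each configuration against the definitions of shuffle, tilt and twist twins from Section~\ref{sec:doublesplit}. Write $\dst=(H_1,v_1,w_1,\alpha_1,\beta_1,H_2,v_2,w_2,\alpha_2,\beta_2,\bS)$ with $\bS=(X_1,\ldots,X_k)$. Unfolding at $v_i,w_i$ splits each of these two vertices into a pair, producing a terminal set $T_i=\{v_i^-,v_i^+,w_i^-,w_i^+\}\subseteq V(G_i)$. Since (TIc) forces $\zB_{H_i}(X_j)=\{v_i,w_i\}$ for every $j$, each $X_j$, viewed as an edge set of $G_i$, has boundary in $T_i$, and so does the remainder $Y_i:=E(G_i)-\bigcup_j X_j$. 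Thus $E(G_i)$ partitions into at most $k+1$ pieces, each having boundary in $T_i$ and (by (TIb)) each non-trivial.

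Next I would use the $3$-connectivity of $M:=\ecycle(G_1,\Sigma_1)$, together with Proposition~\ref{connectivity:n3c}(3), to bound the number of pieces by $4$. Grouping pieces whose boundaries share a common $2$-subset of $T_1$ produces $2$-separations of $G_1$, each of which must have both sides non-bipartite with respect to $\Sigma_1$; a pigeonhole argument on the $\binom{4}{2}=6$ possible boundaries, together with Lemma~\ref{prelim:conn}, forces at most four pieces to survive. Lemma~\ref{doublesplit:le1} then permits me to replace $\dst$ by any compatible template, normalizing $\alpha_i,\beta_i$ to the precise forms demanded by the shuffle, tilt and twist definitions. The case of four non-trivial pieces, together with the induced identification of $T_1$ with $T_2$ coming from the Whitney flips in $\bS$, is exactly the shuffle configuration. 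In the case of two non-trivial pieces, the four terminals split naturally into the pairs $\{v_i^-,v_i^+\}$ and $\{w_i^-,w_i^+\}$, and the Whitney flips in $\bS$ act on these pairs either preserving or exchanging the pair structure, yielding tilt or twist twins respectively. The edges $e,f,g,h$ of the tilt/twist definitions appear as the edges of $G_i$ with both endpoints in $T_i$.

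The main obstacle will be the bookkeeping in the two-piece case: one must track precisely how the Whitney flips on $X_1,\ldots,X_k$, combined with the possibly different unfolding choices $(\alpha_i,\beta_i)$ for the two sides, permute the four terminals as one passes between $G_1$ and $G_2$, and then verify that the resulting identification is either a tilt or a twist. The distinction between tilt and twist corresponds exactly to whether the induced permutation of $T_2$ fixes the pair structure $\{v_2^-,v_2^+\},\{w_2^-,w_2^+\}$ setwise or mixes the two pairs. Once $\dst$ has been normalized through Lemma~\ref{doublesplit:le1}, every other step is a direct match against Section~\ref{sec:doublesplit}; the substance of the proof therefore lies in the case enumeration via Proposition~\ref{connectivity:n3c} and in showing that the shuffle/tilt/twist trichotomy is exhaustive under the $3$-connectivity hypothesis.
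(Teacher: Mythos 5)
There is a genuine gap: your proposed trichotomy is driven by the wrong mechanism. In the paper's argument the shuffle outcome occurs whenever $\bS\neq\emptyset$ (and also when $\bS=\emptyset$ with $\{v_1,w_1\}=\{v_2,w_2\}$), whereas tilt and twist occur \emph{only} when $\bS=\emptyset$, i.e.\ $H_1=H_2$, and are distinguished by whether the two blocking pairs $\{v_1,w_1\}$ and $\{v_2,w_2\}$ are disjoint (tilt) or meet in exactly one vertex (twist). Your criterion --- counting non-trivial pieces and tracking how the Whitney-flips of $\bS$ permute the four terminals --- cannot reproduce this: in the tilt and twist cases there are no Whitney-flips at all, so the permutation you propose to analyse is the identity, and your partition of $E(G_i)$ consists of the single piece $Y_i=E(G_i)$, giving you no candidate for the two sides $X_1,X_2$ required by the tilt/twist definitions. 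Your pigeonhole bound of four pieces via Proposition~\ref{connectivity:n3c} is also not how the shuffle arises: the template may have arbitrarily many $2$-separations with boundary $\{v_i,w_i\}$, and these are \emph{grouped} into four classes rather than reduced to four.

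The missing technical core is the analysis of the cut $\Gamma_1\tr\Gamma_2=\delta_{H_1}(U)$, where $\Gamma_i=\alpha_i\tr\beta_i$ (this is a cut because $(H_1,\Gamma_1)$ and $(H_2,\Gamma_2)$ are equivalent). One normalises $U$ by swaps so that $v_1,w_1\notin U$, refines $E(H_1)$ minus loops into minimal $2$-separations $X_1,\ldots,X_t$ with boundary $\{v_1,w_1\}$, and then uses Lemma~\ref{doublesplit:rem4} together with the connectivity condition (TIb) to show that $U$ is a union of interiors $\zI_{H_1}(X_j)$. The four shuffle classes then correspond exactly to the $2\times 2$ choices of flipped-or-not (membership in $[k]$) and resigned-or-not (membership in the index set determined by $U$); in the $k=0$ cases the set $U$ itself, via $Y_1:=E(H_1[U])\cup\delta(U)$, supplies the two sides of the tilt or twist, and the loops of $H_1$ supply the special edges $e,f,g,h$. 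None of this is recoverable from the terminal-permutation bookkeeping you describe, so the proposal as written would not close.
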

\begin{LE}\label{doublesplit:le3}
Suppose that $(G_1,\Sigma_1)$ and $(G_2,\Sigma_2)$ are $\Delta$-irreducible ec-standard quad siblings arising from a 
quad-template $\dst$ of type II. Then $(G_1,\Sigma_1)$ and $(G_2,\Sigma_2)$ are either widget or gadget twins.
\end{LE}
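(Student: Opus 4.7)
The plan is to proceed by case analysis on $k := |\bS| \in \{1,2\}$, handling the case $k=1$ so as to recover a widget template and the case $k=2$ so as to recover a gadget template. In both cases I will pin down the local structure of $H_1$ and $H_2$ near the designated vertices $v_i, w_i$ and the flipped sets $X_j$, and then invoke Lemma~\ref{doublesplit:le1} to replace $\dst$ by a compatible widget or gadget template, from which the statement follows.

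The starting observation is that, since $(G_1,\Sigma_1)$ is ec-standard, $\ecycle(G_1,\Sigma_1)$ is $3$-connected, so by Proposition~\ref{connectivity:n3c} each $H_i$ is $2$-connected up to loops, carries at most one loop per signature, and admits no bipartite-bipartite $2$-separation. The type II condition $w_1 \in \zI_{H_1}(X_1)$ forces every edge at $w_1$ in $H_1$ to lie in $X_1$, so the edges at $w_1$ form parallel bundles between $w_1$ and the two attachment vertices in $\zB_{H_1}(X_1)$; the analogous statement for $w_2$ holds in $\bar X_1 - \loops(H_2)$ when $k=1$ and in $X_2$ when $k=2$. In particular $v_i$ is one of the attachment vertices of $X_1$ on both sides of the flip sequence, and there is a distinguished third vertex (call it $z_1$ when $k=1$, and a pair $z_1, u_1$ when $k=2$) that carries the other bundle.

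Next I would use the ec-standard assumption to pin down multiplicities. If any bundle at $w_1$ or $w_2$ consisted of fewer than two edges, or if an extra edge at these vertices were present, a suitable signature exchange or Lov\'asz-flip would produce an equivalent signed graph with a blocking vertex, contradicting ec-standardness. Combined with $\Delta$-irreducibility, which rules out triangles that could be collapsed by a $\Delta$-substitution, this forces exactly two pairs of parallel edges at each $w_j$, together with up to four loops distributed across $\alpha_1, \beta_1, \alpha_2, \beta_2$; the remaining edges form a subgraph $X$ attached at $\{v_1, w_1, z_1\}$ in the widget case or $\{v_1, z_1, u_1\}$ in the gadget case. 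Since, by Theorem~\ref{isopair-co}, the matching signature pair is unique up to resigning, the distinguished subset $\gamma \subseteq \delta_{H_1}(v_1) \cap X$ prescribed by the widget or gadget template is then determined.

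Finally, the resulting data assembles exactly into a widget template for $k=1$ or a gadget template for $k=2$, which by construction is compatible with $\dst$; by Lemma~\ref{doublesplit:le1} the siblings arising from the two templates are pairwise equivalent, so $(G_1,\Sigma_1)$ and $(G_2,\Sigma_2)$ are widget or gadget twins. The main obstacle will be the bookkeeping needed to force exactly the prescribed multiplicity of parallel edges and loops, and to rule out degenerate configurations in which $X$ is empty or attaches with fewer than three boundary vertices; the $k=2$ case is additionally delicate because the parity constraints across the two disjoint flipped sets $X_1, X_2$ must remain consistent under the identification of $\alpha_1 \tr \alpha_2$ and $\beta_1 \tr \beta_2$ modulo cuts of $H_1$, which I expect to handle by treating $X_1$ and $X_2$ symmetrically and exploiting the rigidity of the matching signature pair.
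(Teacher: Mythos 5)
Your high-level skeleton (split on $|\bS|\in\{1,2\}$, recover a widget template for $k=1$ and a gadget template for $k=2$) matches the paper, but the core of the argument --- the mechanism that forces the rigid local structure --- is missing, and the mechanism you do propose is incorrect. First, the claim that $w_1\in\zI_{H_1}(X_1)$ makes the edges at $w_1$ into two parallel bundles to the attachment vertices of $X_1$ is false: $w_1$ may be adjacent to arbitrarily many interior vertices of $X_1$, and indeed in the widget outcome $w_1$ is a boundary vertex of the large piece $X$ with unrestricted degree into it. What actually has to be forced is that the \emph{complementary} piece containing $w_2$ collapses to at most the four edges $\{a,b,c,d\}$ with $\delta_{H_1}(w_2)=\{a,b,c,d\}$, and that the signature $\beta_1$ (not the whole of $\delta(w_1)$) is confined to at most two non-loop edges. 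Second, your proposed engine for this --- that an under- or over-full bundle would let a signature exchange or Lov\'asz-flip create a blocking vertex --- is unsubstantiated and is not how the collapse can be achieved. The paper's engine is entirely different: the template condition that $\alpha_1\tr\alpha_2\tr\beta_1\tr\beta_2$ is a cut of $H_1$ is restricted to each piece; Lemmas~\ref{doublesplit:rem4} and~\ref{doublesplit:rem5} then extract a subset with boundary contained in $\{v_1,w_1\}$ or $\{v_1,z_1,w_2\}$ and nonempty interior, which either contradicts Lemma~\ref{doublesplit:cl1} or is a $3$-$(0,1)$-separation of both $(G_1,\Sigma_1)$ and $(G_2,\Sigma_2)$; in the latter case Lemma~\ref{doublesplit:clean} makes the pair $\Delta$-reducible, contradicting the hypothesis. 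Your proposal never actually deploys $\Delta$-irreducibility, which is the hypothesis that kills the ``extra piece is nonempty'' configurations; without it the lemma is simply false.

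Two further omissions: you do not use the swap operation ($\alpha_i\mapsto\delta(v_i)\tr\alpha_i$, $\beta_i\mapsto\delta(w_i)\tr\beta_i$), which is needed to normalize the cases where the restricted cut equals $\delta(w_1)$, $\delta(z_1)$, or $\delta(\{z_1,w_2\})$ down to the empty-cut case before the structural reading-off; and your final step, which passes through a \emph{compatible} template and Lemma~\ref{doublesplit:le1}, would only yield that $(G_1,\Sigma_1)$ and $(G_2,\Sigma_2)$ are widget or gadget \emph{siblings} (equivalent to twins), whereas the lemma asserts they are twins --- the paper reads the widget/gadget structure off the given template directly rather than replacing it.
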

\begin{LE}\label{doublesplit:le4}
Suppose that $(G_1,\Sigma_1)$ and $(G_2,\Sigma_2)$ are $\Delta$-irreducible ec-standard quad siblings arising from a 
quad-template $\dst=(H_1,v_1,w_1,\alpha_1,\beta_1,H_2,v_2,w_2,\alpha_2,\beta_2,\bS)$. 
Then there exists a template $\dst'$ which is compatible with $\dst$ and is of type I or type II.
\end{LE}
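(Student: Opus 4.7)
The plan is to start by applying Proposition~\ref{conn:startbg} to the equivalent graphs $H_1$ and $H_2$ with $z := v_1$ and $z' := v_2$. This decomposes the Whitney transformation from $H_1$ to $H_2$ as (a) w-sequences $\bL^{(i)}$ of $H_i$ whose flips avoid $v_i$, producing intermediate graphs $\tilde H_i := \Wflip[H_i, \bL^{(i)}]$, followed by (b) a common w-star $\bS^*$ that is centered at $v_1$ in $\tilde H_1$ and at $v_2$ in $\tilde H_2$, with $\tilde H_2 = \Wflip[\tilde H_1, \bS^*]$.

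Next I would assemble a candidate compatible quad-template
$$\dst' = (\tilde H_1, v_1, w_1, \alpha_1', \beta_1', \tilde H_2, v_2, w_2, \alpha_2', \beta_2', \bS^*).$$
Since each flip in step (a) avoids $v_i$, the edge set $\delta(v_i)$ is preserved, so setting $\alpha_i' := \alpha_i$ meets the incidence requirement at $v_i$ in $\tilde H_i$. For $\beta_i'$ I would adjust $\beta_i$ by the symmetric difference with the fundamental cuts corresponding to those $X \in \bL^{(i)}$ having $w_i \in \zB_{H_i}(X)$; since each such correction is a cut of $H_i$ and the graphs $H_1, \tilde H_1, H_2, \tilde H_2$ are all equivalent, $\beta_i \Delta \beta_i'$ is a cut of $H_1$ and compatibility (a)--(c) holds.

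It remains to verify that $\dst'$ is of type I or type II. Because $\bS^*$ is a w-star centered at $v_i$ in $\tilde H_i$, conditions (TIa) and ``$v_i \in \zB(X_j)$'' hold automatically, and the other boundary vertex $u_X$ of each $X \in \bS^*$ is distinct. The vertex $w_i$ is therefore in exactly one of three positions relative to $\bS^*$: it is some $u_X$ (at most one spoke per side), it lies in $\zI_{\tilde H_i}(X)$ for at most one $X$, or it lies outside $\bigcup \bS^*$ entirely. When every spoke satisfies $u_X = w_i$ in both $\tilde H_i$, the distinctness built into the w-star forces $|\bS^*| \le 1$, which fits type I (after possibly a trivial reduction). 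Otherwise, the position of $w_i$ singles out at most one distinguished spoke on each side, and reorganizing $\bS^*$ by applying Remark~\ref{conn:laminar} and Lemma~\ref{conn:makestar} around the spokes containing $w_1, w_2$ delivers the type II structure with $k \in \{1,2\}$.

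The main obstacle will be ruling out the configuration in which $\bS^*$ contains a spoke $X$ whose ``other endpoint'' $u_X$ is different from $w_1$ in $\tilde H_1$ yet from $w_2$ in $\tilde H_2$, and such spokes cannot be absorbed into either template type. Here I would lean on the three standing hypotheses: $3$-connectivity of $\ecycle(G_i, \Sigma_i)$ together with Proposition~\ref{connectivity:n3c} strongly restricts the $2$-separations of $\tilde H_i$ that can appear as spokes; ec-standardness prevents any equivalent sibling from having a blocking vertex, so $v_i$ cannot ``swallow'' all odd edges of such an extra spoke; and $\Delta$-irreducibility forbids the triangle configurations that such extraneous spokes would force in $G_1, G_2$. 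An extra spoke must therefore either create one of these forbidden features or can be merged with $X_1$ or $X_2$ by a further non-crossing Whitney rearrangement (via Proposition~\ref{conn:star2} with $Z = \{v_i, w_i\}$), completing the dichotomy.
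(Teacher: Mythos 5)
There is a genuine gap here, on two levels.

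First, your normalization step protects too little. You invoke Proposition~\ref{conn:startbg} with $z=v_1$, $z'=v_2$, so the preliminary sequences $\bL^{(i)}$ avoid only $v_i$; they are free to have $w_i$ on their boundaries, and after such flips the set $\beta_i$ need no longer lie in $\delta_{\tilde H_i}(w_i)\cup\loops(\tilde H_i)$. Your fix --- replacing $\beta_i$ by its symmetric difference with ``fundamental cuts'' of the offending separations --- is not shown to land back inside the star of a single vertex, and in general it need not: $\delta_{H_i}(w_i)$ becomes a cut $\delta_{\tilde H_i}(U)$ of the new graph with $|U|$ possibly $\geq 2$, and a subset of such a cut need not be congruent, modulo $\cut(\tilde H_i)$, to a subset of any single vertex star. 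Without that, $\dst'$ is not a quad-template at all, so compatibility is moot. The paper sidesteps this entirely by applying Proposition~\ref{conn:star2} with $Z=\{v_1,w_1\}$ (and symmetrically with $\{v_2,w_2\}$ on the $H_2$ side), so that the absorbed flips preserve both $\delta(v_i)$ and $\delta(w_i)$ and $\alpha_i,\beta_i$ need no adjustment; note the paper deliberately delays forming a w-star (via Lemma~\ref{conn:makestar}) until the very end, whereas you form it first and thereby lose control of $w_i$.

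Second, and more seriously, the heart of the lemma is missing. The whole difficulty is to show that, among the non-crossing separations meeting $\{v_i,w_i\}$, either all of them have boundary exactly $\{v_i,w_i\}$ on both sides (type I), or else $w_1$ and $w_2$ sit in the \emph{interiors} of complementary pieces, $w_i$ never lies on a boundary, $v_i$ lies on every boundary, and at most two spokes remain (type II). In the paper this is Claim~1 of the proof (Claim~\ref{doublesplit:cl2}) together with Lemma~\ref{doublesplit:cl1}, and the mechanism is concrete: from a putative bad spoke one manufactures, via Lemma~\ref{doublesplit:rem4}, a common $3$-$(0,1)$-separation of $(G_1,\Sigma_1)$ and $(G_2,\Sigma_2)$, which by Lemma~\ref{doublesplit:clean} contradicts $\Delta$-irreducibility (with ec-standardness and Proposition~\ref{connectivity:n3c} ruling out the low-order and $3$-$(0,0)$ cases). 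Your final paragraph correctly names the three hypotheses that must be used but only asserts that a bad spoke ``must create one of these forbidden features''; no construction of the forbidden separation is given, and merging a bad spoke into $X_1$ or $X_2$ by further Whitney rearrangement is not available in general (the spokes of a w-star are disjoint with distinct second endpoints). As written, the proposal reduces the lemma to exactly the statement it was supposed to prove.
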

\begin{LE}\label{doublesplit:le0}
Let $\dst=(H_1,v_1,w_1,\alpha_1,\beta_1,H_2,v_2,w_2,\alpha_2,\beta_2)$ be a quad-template. 
Let $(G_1,\Sigma_1)$ and $(G_2,\Sigma_2)$ be the quad siblings arising from $\dst$.
If $(G_1,\Sigma_1)$ and $(G_2,\Sigma_2)$ are ec-standard and $\Delta$-irreducible,
then either $(G_1,\Sigma_1)$ and $(G_2,\Sigma_2)$ are shuffle, tilt, twist, gadget or widget siblings or
$H_1$ and $H_2$ are $2$-connected, except for the possible presence of loops.
\end{LE}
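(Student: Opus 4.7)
The plan is to prove the contrapositive: assuming that one of $H_1$, $H_2$ fails to be $2$-connected up to loops, I will exhibit $(G_1,\Sigma_1),(G_2,\Sigma_2)$ as one of the five named sibling types. First, if $H_1 \setminus \loops(H_1)$ has two components, identify a vertex from each component via a Whitney-flip; this preserves the quad-template data (up to a compatible one in the sense of Lemma~\ref{doublesplit:le1}), so we may assume $H_1 \setminus \loops(H_1)$ is connected. By the symmetry between $H_1$ and $H_2$, assume $H_1 \setminus \loops(H_1)$ nevertheless has a cut-vertex $x$, giving a nontrivial partition $E(H_1) - \loops(H_1) = X \sqcup Y$ with $V_{H_1}(X) \cap V_{H_1}(Y) = \{x\}$.

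The key observation is that, since $\ecycle(G_1,\Sigma_1)$ is $3$-connected, Proposition~\ref{connectivity:n3c}(2) forces $G_1 \setminus \loops(G_1)$ to be $2$-connected. The unfolding from $H_1$ to $G_1$ only modifies incidences at $v_1$ and $w_1$ (and turns loops in $\alpha_1 \cup \beta_1$ into edges between the split copies), so a cut-vertex $x$ of $H_1 \setminus \loops(H_1)$ with $x \notin \{v_1,w_1\}$ would survive as a cut-vertex of $G_1 \setminus \loops(G_1)$, contradicting $2$-connectedness. Hence $x \in \{v_1, w_1\}$, say $x = v_1$.

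Next I carry out a case analysis on the position of $w_1$ relative to $X, Y$ and on the parallel structure of $H_2$, using Lemma~\ref{doublesplit:le1} repeatedly to keep templates compatible after intermediate Whitney-flips. The subcases are:
\begin{itemize}
\item If $w_1$ lies in the interior of one side, with an analogous cut-vertex in $H_2$, then splitting $v_1$ into $v_1^-,v_1^+$ according to the distribution of $\alpha_1$ between $X$ and $Y$ recovers the shuffle, tilt or twist configuration, depending on whether the matching signature pair pairs elements across or within the two sides.
\item If further cut structure persists---namely $Y$ contains essentially only loops plus a controlled set of non-loop edges, and the loops become the four parallel edges $\ell_1,\ldots,\ell_4$ after unfolding---then the configuration matches either widget twins (a single underlying w-flip) or gadget twins (two w-flips), following the explicit constructions of Section~\ref{sec:doublesplit}.
\end{itemize}
In each subcase the matching signature pair is pinned down by $\alpha_1 \tr \beta_1$ (and the analogous data on the $H_2$ side) via Remark~\ref{iso:alphabetasign}, and one verifies by direct inspection that the resulting $(G_1,\Sigma_1),(G_2,\Sigma_2)$ coincide, up to equivalence, with the twins defined in Section~\ref{sec:doublesplit}; appealing to equivalence then closes the gap between \emph{twins} and \emph{siblings}.

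The main obstacle is the second bullet: the widget and gadget templates involve very specific parallel-loop and triangle configurations whose parities must match exactly. The $\Delta$-irreducibility hypothesis is essential here, since it rules out intermediate configurations in which a triangle could be collapsed to a single vertex and thereby disguised as a shuffle/tilt/twist case; the ec-standard hypothesis rules out blocking vertices that would collapse the unfolding to the split-sibling regime covered by Theorem~\ref{split:char}. Threading both hypotheses through each subcase, while keeping careful track of how $\alpha_i, \beta_i$ restrict to $X$ and $Y$, so that exactly the five named twin types remain is where the real work of the proof is concentrated.
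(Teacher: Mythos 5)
Your opening reduction is sound and matches the paper's first step: since $\ecycle(G_i,\Sigma_i)$ is $3$-connected, $G_i\setminus\loops(G_i)$ is $2$-connected, and unfolding only alters incidences at $v_i,w_i$, so every cut-vertex of $H_i\setminus\loops(H_i)$ lies in $\{v_i,w_i\}$. But from that point on your argument is a sketch of intentions rather than a proof, and it omits the idea that actually makes the lemma go through. The block structure must first be pinned down precisely: one shows (via Lemma~\ref{doublesplit:cl1}, which you never invoke) that every block of $H_i\setminus\loops(H_i)$ attaches at the single vertex $v_i$, that there are exactly two blocks $X,Y$, that $w_1\in\zI_{H_1}(X)$ while $w_2\in\zI_{H_2}(Y)$, and --- the hardest part, a separate claim requiring Proposition~\ref{conn:star2}, Lemma~\ref{doublesplit:rem4} and Lemma~\ref{doublesplit:clean} --- that $H_1[X]=H_2[X]$ and $H_1[Y]=H_2[Y]$, so that $H_2$ differs from $H_1$ only in which vertices of the two blocks are identified. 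None of this is established by your two bullet points, which assert without argument that the configurations "match" shuffle/tilt/twist or widget/gadget; your own closing paragraph concedes that this verification is "where the real work of the proof is concentrated," i.e.\ it has not been done.

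The missing idea is the following: once the two-block structure is known, one adds an auxiliary edge $\Omega$ joining the ends of a path $P=P_x\cup P_y$ threading both blocks, producing $2$-connected (up to loops) graphs $H_1',H_2'$ that are equivalent, adjusts $\alpha_i$ to $\alpha_i'$ so that $P\cup\Omega$ is even, and obtains a quad-template of type I or type II to which Lemmas~\ref{doublesplit:le2} and~\ref{doublesplit:le3} apply; deleting $\Omega$ then returns the original siblings. Your proposal instead tries to identify the twin types directly from the block decomposition, but the twin definitions (widget and gadget in particular) are stated in terms of $2$-connected $H_i$ with specific w-flips and loop sets, so there is no direct match to verify in the non-$2$-connected setting --- some device for restoring $2$-connectivity, or a from-scratch reconstruction of the twins, is unavoidable, and your proposal supplies neither.
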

\begin{proof}[\textbf{Proof of Theorem~\ref{doublesplit:char}}]
Let $M$ be a $3$-connected non-graphic matroid and $(G_1,\Sigma_1)$ and $(G_2,\Sigma_2)$ be quad siblings representing $M$.
By Remark~\ref{intro:projection}, $(G_1,\Sigma_1)$ and $(G_2,\Sigma_2)$ are ec-standard.
If they are $\Delta$-reducible we are done. 
Thus in the remainder of the proof we will assume that $(G_1,\Sigma_1)$ and $(G_2,\Sigma_2)$ are $\Delta$-irreducible quad siblings.
Suppose that they arise from a quad-template $(H_1,v_1,w_1,\alpha_1,\beta_1,H_2,v_2,w_2,\alpha_2,\beta_2)$.
By Lemma~\ref{doublesplit:le0},  either $(G_1,\Sigma_1)$ and $(G_2,\Sigma_2)$ fall into one
of the cases $(1) - (5)$ in the statement of the theorem, or $H_1$ and $H_2$ are $2$-connected, except for
the presence of loops.
Therefore we may assume that $H_2=\Wflip[H_1,\bS]$ for some w-sequence $\bS$ of $H_1$ and
$(G_1,\Sigma_1)$ and $(G_2,\Sigma_2)$ arise from a quad-template $\dst$ with w-sequence $\bS$.
By Lemma~\ref{doublesplit:le4}, there exists a quad-template $\dst'$
compatible with $\dst$ which is of type I or of type II.
Let $(G_1',\Sigma_1')$ and $(G_2',\Sigma_2')$ be the quad siblings arising from $\dst'$.
If $\dst'$ is of type I then, by Lemma~\ref{doublesplit:le2},  $(G_1',\Sigma_1')$ and $(G_2',\Sigma_2')$
are shuffle, tilt or twist siblings.
If $\dst'$ is of type II then, by Lemma~\ref{doublesplit:le3},  $(G_1',\Sigma_1')$ and $(G_2',\Sigma_2')$
are widget or gadget twins.
Finally, by Lemma~\ref{doublesplit:le1}, $(G_i,\Sigma_i)$ and $(G_i',\Sigma_i')$ are equivalent for $i=1,2$.
Therefore the result follows.
\end{proof}

The proofs of Lemma~\ref{doublesplit:le2}, Lemma~\ref{doublesplit:le3},
Lemma~\ref{doublesplit:le4} and Lemma~\ref{doublesplit:le0} are given in Section~\ref{sec:lemmaProof}.
First we require some technical results.
%----------------------------------------------------------------------
\subsection{Technical lemmas}
%----------------------------------------------------------------------
Recall that signed graph $(G,\Sigma)$ is bipartite if $G$ doesn't contain any $\Sigma$-odd cycle;
moreover, a set $X$ is a $3$-$(0,1)$-separation if $X$ is a $3$-separation of $G$ such that 
$(G[X],\Sigma \cap X)$ is bipartite and $(G[\bar{X}],\Sigma-X)$ is non-bipartite.
\begin{LE}\label{doublesplit:clean}
Let $(G_1,\Sigma_1)$ and $(G_2,\Sigma_2)$ be ec-standard siblings.
Let $X$ be a $3$-$(0,1)$-separation in both $(G_1,\Sigma_1)$ and $(G_2,\Sigma_2)$.
Then $(G_1,\Sigma_1)$ and $(G_2,\Sigma_2)$ are $\Delta$-reducible.
\end{LE}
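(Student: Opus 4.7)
The plan is to perform a $\Delta$-reduction in which the bipartite part $G_i[X]$ is replaced by a triangle on the three boundary vertices. First, since $(G_i[X],\Sigma_i\cap X)$ is bipartite, I would resign so that $\Sigma_i\cap X=\emptyset$ for $i=1,2$. Let $M$ denote the common even cycle matroid, and write $\zB_{G_i}(X)=\{a_i,b_i,c_i\}$. The even cycles of $(G_i,\Sigma_i)$ contained in $X$ are exactly the cycles of $G_i[X]$, so
\[\cycle(G_1[X])=\cycle(G_2[X]),\]
and Theorem~\ref{intro:whitney} implies that $G_1[X]$ and $G_2[X]$ are equivalent.

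Next I would arrange that $G_1[X]$ and $G_2[X]$ coincide as labeled graphs, with the boundary triples identified under the natural correspondence. Note that by Proposition~\ref{connectivity:n3c}(3), the $3$-connectivity of $M$ combined with the bipartiteness of $(G_i[X],\Sigma_i\cap X)$ prevents any $2$-separation of $G_i$ from being contained in $X$; thus within $G_i$ the subgraph $G_i[X]$ is essentially rigid at its boundary. After replacing $(G_1,\Sigma_1)$ by an equivalent signed graph using Whitney-flips supported on sub-separations of $X$, I would assume that $G_1[X]=G_2[X]$, with common boundary $\{z_1,z_2,z_3\}$. Call this common subgraph $H$ and define $(\tilde G_i,\Sigma_i)$ from $(G_i,\Sigma_i)$ by deleting the edges in $X$ and adding three new edges $e_1,e_2,e_3$ forming a triangle on $z_1,z_2,z_3$, where $e_i$ joins the two vertices in $\{z_1,z_2,z_3\}\setminus\{z_i\}$. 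Since $\Sigma_i\subseteq\bar X$, the triangle $\{e_1,e_2,e_3\}$ is even in both $(\tilde G_1,\Sigma_1)$ and $(\tilde G_2,\Sigma_2)$, and by construction $(G_i,\Sigma_i)$ is obtained from $(\tilde G_i,\Sigma_i)$ by $\Delta$-substitution replacing this common triangle by the common $H$.

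The heart of the argument is showing that $(\tilde G_1,\Sigma_1)$ and $(\tilde G_2,\Sigma_2)$ are siblings. Every cycle of $\tilde G_i$ is a disjoint union $D\cup S$ with $D\subseteq\bar X$, $S\subseteq\{e_1,e_2,e_3\}$, and with matching odd-degree sets on $\{z_1,z_2,z_3\}$; such a cycle is even iff $|\Sigma_i\cap D|$ is even. For each $T\subseteq\{z_1,z_2,z_3\}$ with $|T|$ even set
\[\mathcal F_i(T):=\{D\subseteq\bar X:\odd(G_i[D])=T,\ |\Sigma_i\cap D|\text{ even}\}.\]
Since $H$ is connected, for each such $T$ there exists $C_T\subseteq X$ with $\odd(G_i[C_T])=T$; then $C_T\cup D$ is an even cycle of $(G_i,\Sigma_i)$ iff $D\in\mathcal F_i(T)$. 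The hypothesis $\ecycle(G_1,\Sigma_1)=\ecycle(G_2,\Sigma_2)$ therefore yields $\mathcal F_1(T)=\mathcal F_2(T)$ for every such $T$. As the even cycles of $(\tilde G_i,\Sigma_i)$ are precisely the sets $D\cup S$ with $D\in\mathcal F_i(T)$ and $S$ a subset of the triangle with odd-degree set $T$, the two collections coincide, so $(\tilde G_1,\Sigma_1)$ and $(\tilde G_2,\Sigma_2)$ are siblings and $(G_1,\Sigma_1),(G_2,\Sigma_2)$ are $\Delta$-reducible.

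The main obstacle I anticipate is the labeled-graph matching step in the second paragraph: Proposition~\ref{conn:star2} is only stated for boundary sets of size at most two, while here $|\zB_{G_i}(X)|=3$. I would handle this either by splitting the boundary into pairs and applying the proposition iteratively, or by exploiting the rigidity from Proposition~\ref{connectivity:n3c}(3) to argue that $M$ itself already forces the position of $\{z_1,z_2,z_3\}$ inside $G_i[X]$, so that no nontrivial Whitney-flip preprocessing is actually required.
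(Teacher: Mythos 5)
Your overall plan coincides with the paper's: resign so that $\Sigma_i\cap X=\emptyset$, identify $G_1[X]$ with $G_2[X]$, and replace this common piece by an even triangle on the boundary. The final paragraph (verifying that the triangle-substituted pair are again siblings) is fine, though the paper treats this as immediate from the definition of $\Delta$-substitution. The problem is that the entire proof hinges on the step you yourself flag as unresolved, and neither of your two proposed workarounds closes it.

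Concretely, you need that the identity map on $X$ is an isomorphism from $G_1[X]$ to $G_2[X]$ carrying $\zB_{G_1}(X)$ onto $\zB_{G_2}(X)$; mere equivalence of $G_1[X]$ and $G_2[X]$ as graphs on the edge set $X$ is not enough. Your first workaround (iterating Proposition~\ref{conn:star2} over pairs of boundary vertices) does not work: the paper explicitly exhibits a five-edge circuit showing the proposition fails for $|Z|=3$, and fixing two boundary vertices at a time gives no control over the third. Your second workaround correctly guesses that the matroid forces rigidity, but Proposition~\ref{connectivity:n3c}(3) concerns $2$-separations of $G_i$ and says nothing about where the three boundary vertices sit inside $G_i[X]$. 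Moreover, any route that actually performs Whitney-flips on $G_1$ proves the conclusion only for an \emph{equivalent} pair, whereas $\Delta$-reducibility is defined on the given pair and the lemma is invoked to contradict $\Delta$-irreducibility of that pair; so the flip-free "rigidity" route is not optional. The missing argument is the following: for any $(u_i,u_j)$-path $P$ in $G_1[X]$, pick a $(u_i,u_j)$-path $Q$ in $G_1[\bar X]$ of the same parity (such a $Q$ exists because $\ecycle(G_1,\Sigma_1)$ is $3$-connected and $(G_1[\bar X],\Sigma_1\cap\bar X)$ is non-bipartite); then $P\cup Q$ is an even circuit, hence a circuit of $\ecycle(G_2,\Sigma_2)$, and since every cycle of $G_2[X]$ is even, $P$ contains no cycle of $G_2$ and must be a boundary-to-boundary path of $G_2[X]$. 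A short consistency check on paths sharing an endpoint (their union cannot be an even cycle of $(G_2,\Sigma_2)$, being a path of $G_1$) then yields a coherent relabeling of the boundaries and $G_1[X]=G_2[X]$. Without this circuit argument your proof does not go through.
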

\begin{proof}
Let $\zB_{G_1}(X)=\{u_1,u_2,u_3\}$ and $\zB_{G_2}(X)=\{u'_1,u'_2,u'_3\}$. 
We claim that we can relabel the vertices in $\zB_{G_1}(X)$
so that every $(u_i,u_j)$-path in $G_1[X]$ is a $(u_i,'u_j')$-path in $G_2[X]$,
for every choice of $i,j \in [3]$ with $i\neq j$.
Consider distinct $i,j \in [3]$. Let $P$ be a $(u_i,u_j)$-path in $G_1[X]$.
Let $Q$ be a $(u_i,u_j)$-path in $G_1[\bar{X}]$ of the same parity as $P$.
Note that some such $Q$ exists as $\ecycle(G_1,\Sigma_1)$ is $3$-connected and $(G_1[\bar{X}],\Sigma_1 \cap \bar{X})$ is non-bipartite.
By the choice of $Q$, $C:=P \cup Q$ is an even circuit of $\ecycle(G_1,\Sigma_1)$, hence a circuit of $\ecycle(G_2,\Sigma_2)$.
As $(G_2[X],\Sigma_2 \cap X)$ is bipartite, every cycle in $G_2[X]$ is even, 
hence $G_2[P]$ does not contain any cycle. 
Therefore $P$ is a $(u_s',u_t')$-path in $G_2[X]$ for some $s,t \in [3]$ with $s \neq t$. 
The same argument holds for every choice of distinct $i,j \in [3]$.
Now, if $P_1$ is a $(u_i,u_j)$-path and $P_2$ is a $(u_j,u_h)$-path in $G_1[X]$, for distinct $i,j,h \in [3]$,
then $P_1$ is a $(u_s',u_t')$-path in $G_2[X]$ and $P_2$ is a $(u_q',u_r')$-path in $G_2[X]$. 
We cannot have $\{s,t\}=\{q,r\}$, as otherwise $P_1 \cup P_2$ would be an even cycle in $(G_2,\Sigma_2)$ and a path in $G_1$. 
Therefore $P_1$ and $P_2$ share exactly one end in $G_2[X]$, say $u_t'$.
Thus, we can reindex $u_j$ as $u_t$. Similarly we can reindex all the vertices in $\zB_{G_1}[X]$
as desired. Note that, in particular, $G_1[X]=G_2[X]$.
For $i=1,2$, let $\Sigma_i'$ be a resigning of $(G_i,\Sigma_i)$ such that $\Sigma_i' \cap X =\emptyset$.
Define $Y:=X \cup \{e \in E(G_1): e \notin \Sigma_i', e=(u_i,u_j) \ \textrm{for some} \ i,j \in [3] \}$.
Now we can apply a $\Delta$-reduction to $Y$.
\end{proof}
\begin{LE}\label{doublesplit:rem4}
Let $H$ be a graph and let $s_1$ and $s_2$ be distinct vertices of $H$. Let $\varphi_i \subseteq \delta_H(s_i)$, for $i=1,2$.
Suppose that $\varphi_1 \Delta \varphi_2$ is a non-empty cut of $H$
such that $\varphi_1 \Delta \varphi_2 \neq \delta_H(s_2)$.
Then there exists $Y \subseteq E(H)$ such that the following hold:
\begin{enumerate}[\;\;\;(1)]
	\item $\zB_H(Y) \subseteq \{s_1,s_2\}$;
	\item $\zI_H(Y) \neq \emptyset$;
	\item $\delta_H(s_1)\cap Y=\varphi_1 - \varphi_2$;
	\item for $\hat{\varphi}_2:= \varphi_2$ or $\hat{\varphi}_2:=\varphi_2 \Delta \delta_H(s_2)$,
		$\delta_H(s_2)\cap Y=\hat{\varphi}_2 - \varphi_1$.
\end{enumerate}
\end{LE}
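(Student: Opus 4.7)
My plan is to read off $Y$ directly from the cut structure of $\varphi_1 \Delta \varphi_2$. Since the hypothesis says $\varphi_1 \Delta \varphi_2$ is a non-empty cut, write $\varphi_1 \Delta \varphi_2 = \delta_H(U)$ for some $U \subseteq V(H)$ with $\emptyset \neq U \neq V(H)$. Because $\varphi_1 \Delta \varphi_2 \subseteq \delta_H(s_1) \cup \delta_H(s_2)$, every cut edge is incident to $s_1$ or $s_2$; equivalently, no edge of $H - \{s_1,s_2\}$ joins $U - \{s_1,s_2\}$ to $\bar U - \{s_1,s_2\}$. This observation is what will force condition $(1)$ for essentially any $Y$ of the shape ``edges meeting one side of $U$'', since every interior vertex of $Y$ outside $\{s_1,s_2\}$ automatically has all incident edges on the same side.

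I would then split into two cases according to whether $s_1$ and $s_2$ lie in the same component of $H - (\varphi_1 \Delta \varphi_2)$ or not. In the first case, after possibly swapping $U$ with $\bar U$, assume $s_1,s_2 \in \bar U$ and set
\[
Y := E(H[U]) \cup \delta_H(U),
\]
i.e.\ the edges with at least one endpoint in $U$. A short computation using $\varphi_1 \subseteq \delta_H(s_1)$, $\varphi_2 \subseteq \delta_H(s_2)$, and $\varphi_1 \Delta \varphi_2 = \delta_H(U)$ identifies $\delta_H(s_1) \cap Y$ with $\varphi_1 - \varphi_2$ and $\delta_H(s_2) \cap Y$ with $\varphi_2 - \varphi_1$, so $(3)$ and $(4)$ hold with $\hat{\varphi}_2 := \varphi_2$. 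Condition $(2)$ is immediate because $U \neq \emptyset$ and $U \cap \{s_1,s_2\} = \emptyset$.

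In the other case, swap so that $s_1 \in U$ and $s_2 \in \bar U$. Now every $(s_1,s_2)$-edge lies in $\delta_H(U) = \varphi_1 \Delta \varphi_2$, so each such edge belongs to exactly one of $\varphi_1, \varphi_2$; this is the delicate feature of the argument. I would take $Y$ to consist of the edges of $H[\bar U \cup \{s_1\}]$ with a prescribed subset of the $(s_1,s_2)$-edges removed, the precise subset being dictated by the choice of $\hat{\varphi}_2 \in \{\varphi_2, \varphi_2 \Delta \delta_H(s_2)\}$. Here the hypothesis $\varphi_1 \Delta \varphi_2 \neq \delta_H(s_2)$ is exactly what is needed for $(2)$: it guarantees $\bar U \neq \{s_2\}$, so some vertex of $\bar U$ other than $s_2$ sits in $\zI_H(Y)$.

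The main obstacle will be this second case. The difficulty is that each $(s_1,s_2)$-edge $e$ lies simultaneously in $\delta_H(s_1)$ and $\delta_H(s_2)$, so the membership $e \in Y$ is counted in both $(3)$ and $(4)$; matching the two identities forces $Y$ to contain a very specific subset of the $(s_1,s_2)$-edges. The two permitted values $\hat{\varphi}_2 = \varphi_2$ and $\hat{\varphi}_2 = \varphi_2 \Delta \delta_H(s_2)$ encode exactly the two consistent ways of resolving this parity on the $(s_1,s_2)$-edges, which is precisely why the flexibility built into (4) is needed. Once the right choice is made, verifying $(1)$--$(4)$ reduces to a routine edge-by-edge check organised by whether each edge is incident to $s_1$ only, $s_2$ only, both, or neither.
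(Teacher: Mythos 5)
Your construction is in substance the one the paper uses: write $\varphi_1\Delta\varphi_2=\delta_H(U)$, normalize which side of $U$ the $s_i$ lie on, and take $Y$ to be the set of edges meeting $U$ stripped of $s_1$ and $s_2$; your Case~1 is identical to the paper's, and your Case~2 set $E(H[\bar U\cup\{s_1\}])$ with all $(s_1,s_2)$-edges deleted is exactly the paper's ``edges meeting $W:=U-\{s_2\}$'' after complementing $U$. The genuine gap sits precisely at the point you defer. You assert that the two permitted values of $\hat{\varphi}_2$ ``encode exactly the two consistent ways of resolving this parity'' on the $(s_1,s_2)$-edges; in fact there may be \emph{no} consistent way, for any $Y$ and either $\hat{\varphi}_2$. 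In your Case~2 every $(s_1,s_2)$-edge crosses the cut, so an $(s_1,s_2)$-edge $e\in\varphi_1$ satisfies $e\notin\varphi_2$, hence $e\in\varphi_1-\varphi_2$ and condition (3) forces $e\in Y$; but $e\in\varphi_1$ gives $e\notin\hat{\varphi}_2-\varphi_1$ for both choices of $\hat{\varphi}_2$, so condition (4) forces $e\notin Y$. Concretely, take $H=K_4$ on $\{s_1,s_2,a,b\}$ minus the edge $ab$, with $e=s_1s_2$, $f=s_1b$, $g=s_2a$, $m=s_1a$, $n=s_2b$, and set $\varphi_1=\{e,f\}$, $\varphi_2=\{g\}$. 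Then $\varphi_1\Delta\varphi_2=\{e,f,g\}=\delta_H(\{s_2,b\})$ is a non-empty cut different from $\delta_H(s_2)=\{e,g,n\}$, yet (3) and (4) make contradictory demands on $e$, so no $Y$ exists. Your ``routine edge-by-edge check'' cannot be completed.

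To be fair, this is a defect of the lemma as stated rather than of your strategy alone: the paper's own proof dismisses (3) and (4) as holding ``by construction'' and silently assumes the same thing, and in every application the authors first arrange that no $(s_1,s_2)$-edge lies in the relevant edge set. The honest repair is to add the hypothesis that no $(s_1,s_2)$-edge belongs to $\varphi_1\Delta\varphi_2$; then in Case~2 there are no $(s_1,s_2)$-edges at all, $Y$ is forced to exclude them, and the correct choice is $\hat{\varphi}_2:=\varphi_2\Delta\delta_H(s_2)$, determined by which side of $U$ contains $s_2$ rather than by any parity bookkeeping on $(s_1,s_2)$-edges. One further small correction: split the cases by whether $s_2$ lies in $U$ or $\bar U$ after normalizing $s_1\notin U$, not by components of $H-(\varphi_1\Delta\varphi_2)$. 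Two vertices in different components can still lie on the same side of every set representing the cut (e.g.\ the $4$-cycle $s_1as_2b$ with $\varphi_i=\delta_H(s_i)$ and $U=\{a,b\}$), in which case your Case~2 normalization ``$s_1\in U$, $s_2\in\bar U$'' is unavailable and it is Case~1's construction that applies.
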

\begin{proof}
As $\varphi_1 \Delta \varphi_2$ is a non-empty cut of $H$, $\varphi_1 \Delta \varphi_2=\delta_H(U)$
for some $U \subset V(H)$, where $U\neq \emptyset, V(H)$. 
If $s_1 \in U$, we can pick $V(H)-U$ instead of $U$. Thus we may assume that $s_1 \notin U$.
If $s_2 \notin U$, let $\hat{\varphi_2} := \varphi_2$ and $W:=U$,
otherwise let $\hat{\varphi_2} := \varphi_2 \Delta \delta_H(s_2)$ and $W:=U - \{s_2\}$.
Thus $s_1,s_2 \notin W$ and $\delta_H(W)=\varphi_1 \Delta \hat{\varphi_2}$.
Define $Y:=\{(u,v) \in E(H): \{u,v\} \cap W \neq \emptyset\}$.
Conditions (3) and (4) in the statement are satisfied by construction.
Note that $U \neq \{s_2\}$, as $\varphi_1 \Delta \varphi_2 \neq \delta_H(s_2)$.
Hence $W$ is non-empty and $\zI_H(Y)$ is non-empty.
For every $v \in W$, $\delta_H(v) \subseteq Y$, hence $v \notin \zB_H(Y)$. 
Moreover, for every $v \notin W \cup \{s_1,s_2\}$, $\delta_H(v) \cap Y =\emptyset$, hence $v \notin \zB_H(Y)$.
Hence $\zB_H(Y) \subseteq \{s_1,s_2\}$.
\end{proof}
\begin{LE}\label{doublesplit:rem5}
Let $H$ be a graph and $s_1,s_2,s_3$ be distinct vertices of $H$. Let $\varphi_i \subseteq \delta_H(s_i)$, for $i=1,2,3$.
Suppose that $\varphi_1 \Delta \varphi_2 \Delta \varphi_3$ is a non-empty cut of $H$.
Suppose moreover that $\varphi_1 \Delta \varphi_2 \Delta \varphi_3$
is not equal to any of the sets $\delta_H(s_2), \delta_H(s_3), \delta_H(\{s_2,s_3\})$.
Then there exists $Y \subseteq E(H)$ such that the following hold:
\begin{enumerate}[\;\;\;(1)]
	\item $\zB_H(Y)\subseteq \{s_1,s_2,s_3\}$;
	\item $\zI_H(Y) \neq \emptyset$;
	\item $\delta_H(s_1)\cap Y=\varphi_1 - (\varphi_2 \cup \varphi_3)$;
	\item for $\hat{\varphi}_2:= \varphi_2$ or $\hat{\varphi}_2:=\varphi_2 \Delta \delta_H(s_2)$,
		$\delta_H(s_2)\cap Y=\hat{\varphi}_2 - (\varphi_1 \cup \varphi_3)$;
	\item for $\hat{\varphi}_3:= \varphi_3$ or $\hat{\varphi}_3:=\varphi_3 \Delta \delta_H(s_3)$,
		$\delta_H(s_3)\cap Y=\hat{\varphi}_3 - (\varphi_1 \cup \varphi_2)$.
\end{enumerate}
\end{LE}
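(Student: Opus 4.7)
The plan is to mimic the construction from the proof of Lemma~\ref{doublesplit:rem4}, adapting it to accommodate three distinguished vertices instead of two. I would first express the non-empty cut as $\varphi_1 \Delta \varphi_2 \Delta \varphi_3 = \delta_H(U)$ for some $\emptyset \neq U \subsetneq V(H)$, and --- using $\delta_H(U) = \delta_H(V(H) - U)$ --- complement $U$ if necessary so that $s_1 \notin U$. For $i = 2, 3$, I would set $\hat{\varphi}_i := \varphi_i$ if $s_i \notin U$ and $\hat{\varphi}_i := \varphi_i \Delta \delta_H(s_i)$ if $s_i \in U$, then let $W := U - \{s_2, s_3\}$ and finally define $Y := \{(u, v) \in E(H) : \{u, v\} \cap W \neq \emptyset\}$.

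The key identity to establish is
\[
\delta_H(W) \;=\; \varphi_1 \Delta \hat{\varphi}_2 \Delta \hat{\varphi}_3,
\]
which holds because removing a vertex $s$ with $s \in U$ from $U$ alters the resulting cut by exactly $\delta_H(s)$ (loops at $s$ never contribute to either cut). Condition (1) is then immediate: any $v \in W$ has $\delta_H(v) \subseteq Y$, and any $v \notin W \cup \{s_1, s_2, s_3\}$ has $\delta_H(v) \cap Y = \emptyset$; hence $\zB_H(Y) \subseteq \{s_1, s_2, s_3\}$.

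For condition (2), I would observe that after ensuring $s_1 \notin U$, the only possibilities for which $W$ could be empty are $U \in \{\{s_2\}, \{s_3\}, \{s_2, s_3\}\}$, corresponding to $\varphi_1 \Delta \varphi_2 \Delta \varphi_3$ equal to $\delta_H(s_2)$, $\delta_H(s_3)$, and $\delta_H(\{s_2, s_3\})$ respectively. All three are excluded by hypothesis, so $W$ (and hence $\zI_H(Y)$) is non-empty.

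Finally for conditions (3)--(5), since $s_i \notin W$ for every $i \in \{1, 2, 3\}$, an edge $e \in \delta_H(s_i)$ lies in $Y$ if and only if its other endpoint is in $W$, which by the key identity is equivalent to $e$ being in $\varphi_1 \Delta \hat{\varphi}_2 \Delta \hat{\varphi}_3$. A case analysis according to whether the other endpoint of $e$ lies outside $\{s_1, s_2, s_3\}$ or coincides with one of the other $s_j$ then reduces each of (3)--(5) to this identity and to the definition of the $\hat{\varphi}_j$. This bookkeeping --- verifying that the choices of $\hat{\varphi}_2$ and $\hat{\varphi}_3$ cancel correctly against $\varphi_1$ on each type of edge --- is the main technical obstacle, and is essentially the three-vertex analogue of the routine check implicit in the proof of Lemma~\ref{doublesplit:rem4}.
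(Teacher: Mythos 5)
Your proposal is correct and follows essentially the same route as the paper's proof: the same representation $\varphi_1\tr\varphi_2\tr\varphi_3=\delta_H(U)$ normalized so that $s_1\notin U$, the same definition of $\hat{\varphi}_2,\hat{\varphi}_3$ and $W:=U-\{s_2,s_3\}$, the same set $Y$, and the same verifications of (1) and (2) via the excluded sets $\{s_2\},\{s_3\},\{s_2,s_3\}$. The endpoint case analysis you defer for (3)--(5) is carried out in the paper only to the same extent (it notes that an edge in $\varphi_2\cap\varphi_3$ must join $s_2$ to $s_3$ and hence avoids $\varphi_1$, and leaves (4) and (5) as ``similar''), so your write-up matches the paper's argument in both structure and level of detail.
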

\begin{proof}
As $\varphi_1 \Delta \varphi_2 \Delta \varphi_3$ is a non-empty cut of $H$, 
$\varphi_1 \Delta \varphi_2 \Delta \varphi_3=\delta_H(U)$
for some $U \subset V(H)$, where $U\neq \emptyset, V(H)$. 
If $s_1 \in U$, we can pick $V(H)-U$ instead of $U$. Thus we may assume that $s_1 \notin U$.
For $i=2,3$, define $\hat{\varphi_i}:= \varphi_i$ if $s_i \notin U$ 
and $\hat{\varphi_i}=\delta_H(s_i) \Delta \varphi_i$ otherwise. Let $W:= U - \{s_2,s_3\}$.
Thus $s_1,s_2,s_3 \notin W$ and $\delta_H(W)=\varphi_1 \Delta \hat{\varphi_2} \Delta \hat{\varphi_3}$.
Define $Y:=\{(u,v) \in E(H): \{u,v\} \cap W \neq \emptyset\}$.
By construction, $\delta_H(s_1)\cap Y=\varphi_1 - (\varphi_2 \Delta \varphi_3)$.
If $e \in \varphi_2 \cap \varphi_3$, then $e=(s_2,s_3)$ and $e \notin \varphi_1$.
Thus $\varphi_1 - (\varphi_2 \Delta \varphi_3)=\varphi_1 - (\varphi_2 \cup \varphi_3)$ and condition (3) holds.
Conditions (4) and (5) follow similarly.
It follows from the hypothesis of the lemma that $U$ is not contained in $\{s_2,s_3\}$.
Hence $W$ is non-empty and $\zI_H(Y)$ is non-empty.
For every $v \in W$, $\delta_H(v) \subseteq Y$, hence $v \notin \zB_H(Y)$. 
Moreover, for every $v \notin W \cup \{s_1,s_2,s_3\}$, $\delta_H(v) \cap Y =\emptyset$, hence $v \notin \zB_H(Y)$.
It follows that $\zB_H(Y) \subseteq \{s_1,s_2,s_3\}$.
\end{proof}
\begin{RE}\label{doublesplit:compeasy}
Let $\dst=(H_1,v_1,w_1,\alpha_1,\beta_1,H_2,v_2,w_2,\alpha_2,\beta_2,\bS)$ be a quad-template.
Suppose that $\bS=(X_1,\ldots,X_k)$ and $\zB_{H_1}(X_1) \cap \{v_1,w_1\} = \emptyset$.
Let $\dst'=(\Wflip[H_1,\bS],v_1,w_1,\alpha_1$, $\beta_1$, $H_2$, $v_2$, $w_2$, $\alpha_2$, $\beta_2,\bS')$,
where  $\bS'=(X_2,\ldots,X_k)$.
Then $\dst'$ is a quad-template and $\dst$ and $\dst'$ are compatible.
\end{RE}
Suppose that $\dst=(H_1,v_1,w_1,\alpha_1,\beta_1,H_2,v_2,w_2,\alpha_2,\beta_2,\bS)$ is a quad-template.
If we substitute $\alpha_i$ (respectively $\beta_i$) with $\delta_{H_i}(v_i) \Delta \alpha_i$ (respectively $\delta_{H_i}(w_i) \Delta \beta_i$)
for $i=1$ or $i=2$ we obtain a quad-template $\dst'$ giving rise to the same quad siblings.
We say that $\dst'$ is obtained from $\dst$ by a {\em swap} on $v_i$ (respectively $w_i$).
We will make repeated use of swaps in the next section.
\begin{LE}\label{doublesplit:cl1}
Let $(G_1,\Sigma_1)$ and $(G_2,\Sigma_2)$ be ec-standard and $\Delta$-irreducible quad siblings arising from
a quad-template $\dst=(H_1,v_1,w_1,\alpha_1,\beta_1,H_2,v_2,w_2,\alpha_2,\beta_2)$.
Let $X$ be a $k$-separation of $H_1$ and $H_2$, for $k \leq 2$. Let $Y:=E(H_1)-(X\cup \loops(H_1))$.
Suppose that $\zI_{H_i}(X), \zI_{H_i}(Y) \neq \emptyset$ and $v_i,w_i \in V(H_i[X])$, for $i=1,2$. 
Suppose moreover that, for $h=1$ or $h=2$, $\zI_{H_h}(X) \cap \{v_h,w_h\} \neq \emptyset$.
Let $j=3-h$.
Then $\zB_{H_j}(X)=\{v_j,w_j\}$ and all the sets $\alpha_j \cap Y$, $(\delta_{H_j}(v_j) - \alpha_j) \cap Y$,
$\beta_j \cap Y$ and $(\delta_{H_j}(w_j) - \beta_j) \cap Y$ are non-empty.
In particular, $X$ is a $2$-separation in $H_1$ and $H_2$.
\end{LE}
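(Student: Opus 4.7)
The plan is to translate the separation $X$ of $H_i$ into a near-separation of $G_i$ by tracking how the vertices $v_i,w_i$ are split during unfolding, and then to exploit the 3-connectivity of the common matroid $M:=\ecycle(G_1,\Sigma_1)=\ecycle(G_2,\Sigma_2)$, together with ec-standardness and $\Delta$-irreducibility, to force the structure of $X$ in $H_j$. By the symmetry between $v$ and $w$ in the hypotheses, I will assume without loss of generality $h=1$ and $v_1\in\zI_{H_1}(X)$; the case $w_1\in\zI_{H_1}(X)$ is analogous.

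First I would bound $|\zB_{G_1}(X)|$. Because $v_1\in\zI_{H_1}(X)$, every edge of $H_1$ incident with $v_1$ lies in $X$, so after splitting $v_1$ into $v_1^-,v_1^+$ to obtain $G_1$, both copies remain in $\zI_{G_1}(X)$. Only $w_1$ among the vertices of $\zB_{H_1}(X)$ can split into two boundary vertices of $G_1$, so $|\zB_{G_1}(X)|\leq k+1\leq 3$. Since $(G_1,\Sigma_1)$ is non-bipartite---otherwise every vertex would be a blocking vertex, contradicting ec-standardness---Lemma~\ref{prelim:conn} applies: $\lambda_M(X)=|\zB_{G_1}(X)|+i+j-1$ with $i,j\in\{0,1\}$ recording non-bipartiteness of the two sides. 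The 3-connectivity of $M$ forces $\lambda_M(X)\geq 3$, using that $|X|,|\bar X|\geq 3$ follows from $\zI_{H_1}(X),\zI_{H_1}(Y)\neq\emptyset$ together with the 2-connectivity of $H_1$ up to loops. When $|\zB_{G_1}(X)|\leq 2$, Proposition~\ref{connectivity:n3c}(3) then forces $i=j=1$, so both $(G_1[X],\Sigma_1\cap X)$ and $(G_1[\bar X],\Sigma_1\cap\bar X)$ are non-bipartite.

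To establish $\zB_{H_2}(X)=\{v_2,w_2\}$, I would argue by contradiction. Suppose some vertex of $\{v_2,w_2\}$ lies in $\zI_{H_2}(X)$; the same unfolding analysis applied to $H_2$ yields $|\zB_{G_2}(X)|\leq 3$, and Lemma~\ref{prelim:conn} gives tight constraints on the parity parameters of $X$ in $(G_2,\Sigma_2)$. Combining these with the non-bipartite sides of $X$ produced in $G_1$, either the parameters match a $3$-$(0,1)$-separation common to both $G_1$ and $G_2$, at which point Lemma~\ref{doublesplit:clean} contradicts $\Delta$-irreducibility, or one of the boundary sizes drops further, violating $\lambda_M(X)\geq 3$. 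The alternative possibility that $\zB_{H_2}(X)$ contains a vertex outside $\{v_2,w_2\}$ is ruled out directly by $|\zB_{H_2}(X)|\leq k\leq 2$ combined with $v_2,w_2\in V(H_2[X])$. Hence $\zB_{H_2}(X)=\{v_2,w_2\}$, so $k=2$ and $X$ is a $2$-separation of both $H_1$ and $H_2$.

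For the four non-emptiness statements, the key translation is that each of $\alpha_2\cap Y$, $(\delta_{H_2}(v_2)-\alpha_2)\cap Y$, $\beta_2\cap Y$, $(\delta_{H_2}(w_2)-\beta_2)\cap Y$ records whether the corresponding split vertex among $v_2^-,v_2^+,w_2^-,w_2^+$ has a non-loop edge in $\bar X$. If any one of these sets is empty, the matching split vertex drops out of $\zB_{G_2}(X)$; this decreases $|\zB_{G_2}(X)|$ below $4$ in a way that, combined with the non-bipartite sides forced in step one, either contradicts Lemma~\ref{prelim:conn} together with the 3-connectivity of $M$, or produces a $3$-$(0,1)$-separation invoking Lemma~\ref{doublesplit:clean} against $\Delta$-irreducibility. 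The main obstacle is the bookkeeping in this final case analysis: several configurations of which of the four split vertices lie in the boundary, and which sides of $X$ are bipartite in each of $G_1$ and $G_2$, must be enumerated and each possibility excluded via ec-standardness, $\Delta$-irreducibility, or the raw 3-connectivity of $M$.
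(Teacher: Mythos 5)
Your high-level plan matches the paper's: push the separation $X$ (or rather its complement $Y$) down into the graphs $G_i$, and then play the $3$-connectivity of $M$, ec-standardness and Lemma~\ref{doublesplit:clean} against each other. But the one idea that actually closes the contradiction is missing. Lemma~\ref{doublesplit:clean} needs the \emph{same} side of the separation to be \emph{bipartite} in both $(G_1,\Sigma_1)$ and $(G_2,\Sigma_2)$, and nothing in your boundary-counting produces bipartiteness. The mechanism in the paper is Remark~\ref{iso:alphabetasign}: $\beta_1$ is a signature of $(G_2,\Sigma_2)$ and $\beta_2$ is a signature of $(G_1,\Sigma_1)$. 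With the interiority hypothesis (say $w_1\in\zI_{H_1}(X)$), $\beta_1\subseteq\delta_{H_1}(w_1)\cup\loops(H_1)$ misses $Y$ entirely, so $(G_2[Y],\Sigma_2\cap Y)$ is bipartite; and the negation of the conclusion (after the swap $\beta_2\mapsto\delta_{H_2}(w_2)\tr\beta_2$ if it is $(\delta_{H_2}(w_2)-\beta_2)\cap Y$ that is empty) forces $\beta_2\cap Y=\emptyset$, so $(G_1[Y],\Sigma_1\cap Y)$ is bipartite too. Only then do Proposition~\ref{connectivity:n3c} and ec-standardness force $Y$ to be a $3$-$(0,1)$-separation of \emph{both} signed graphs, at which point Lemma~\ref{doublesplit:clean} contradicts $\Delta$-irreducibility. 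You read $\beta_2\cap Y=\emptyset$ only as ``$w_2^-$ leaves $\zB_{G_2}(X)$''; its decisive consequence is for the \emph{other} graph's signature.

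Second, your step one is in tension with your own endgame. When $|\zB_{G_1}(X)|\le 2$ you conclude (correctly) that both sides of $X$ are non-bipartite in $(G_1,\Sigma_1)$; but a $2$-separation with both sides non-bipartite has $\lambda_M=3$ and is perfectly consistent with $3$-connectivity, ec-standardness and $\Delta$-irreducibility, so it yields no contradiction on its own, and it is flatly incompatible with the ``$3$-$(0,1)$-separation common to both $G_1$ and $G_2$'' you hope to exhibit afterwards. The case analysis you defer as bookkeeping cannot be completed from boundary sizes and generic parity parameters alone; you need the signature identities above to pin down which side is bipartite and in which graph. (A smaller point: $\zB_{H_2}(X)\subseteq\{v_2,w_2\}$ is not ``ruled out directly'' by $v_2,w_2\in V(H_2[X])$, since a priori both could be interior to $X$ with $\zB_{H_2}(X)$ consisting of other vertices; it is the contradiction argument itself that places $v_2$ and $w_2$ on the boundary, after which $|\zB_{H_2}(X)|\le 2$ gives equality and $k=2$.)
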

\begin{proof}
To simplify the notation we prove the result for the case $h=1$.
Thus we may assume that $v_1 \in V(H_1[X])$, $w_1 \in \zI_{H_1}(X)$ and $v_2, w_2 \in V(H_2[X])$.
Suppose for contradiction that $w_2 \in \zI_{H_2}(X)$ or that $w_2 \in \zB_{H_2}(X)$ but one of the sets
$\beta_2 \cap Y$, $(\delta_{H_2}(w_2) - \beta_2) \cap Y$ is empty.
If $w_2 \in \zI_{H_2}(X)$, then $\beta_2 \cap Y = \emptyset$.
Thus either $\beta_2 \cap Y = \emptyset$ or $w_2 \in \zB_{H_2}(X)$ and $\delta_{H_2}(w_2) \cap Y \subseteq \beta_2$.
In the second case, we may substitute $\beta_2$ with $\delta_{H_2}(w_2) \Delta \beta_2$ (this is just a swap), 
reducing to the case $\beta_2 \cap Y = \emptyset$.
As $w_1 \in \zI_{H_1}(X)$, we have $\beta_1 \cap Y = \emptyset$.
For $i=1,2$, let $v_i$ be split into vertices $v_i^-$ and $v_i^+$ of $G_i$. Define $w_i^-,w_i^+$ similarly. 
Recall that $\beta_i$ is a signature of $(G_{3-i},\Sigma_{3-i})$ for $i=1,2$. 
Every edge in $\beta_i \cap \loops(H_i)$ is also in $\alpha_{3-i} \Delta \beta_{3-1}$
(by definition of unfolding). 
Thus every edge in $\beta_i \cap \loops(H_i)$ is either a $(v_{3-i}^-,v_{3-i}^+)$ edge or a $(w_{3-i}^-,w_{3-i}^+)$ edge in $G_{3-i}$.
This implies that, for $i=1,2$, $(G_i[Y],\Sigma_i \cap Y)$ is bipartite and
$Y$ is a $k_i$-separation of $G_i$ for $k_i\leq 3$.
As $\ecycle(G_1,\Sigma_1)$ is $3$-connected, $Y$ is not a $1$- or a $2$-separation in $G_1$ or $G_2$, by Lemma~\ref{connectivity:n3c}.
Thus $k_1=k_2=3$.
Moreover, $Y$ is not a $3$-$(0,0)$-separation in $(G_i, \Sigma_i)$, for $i=1,2$, for otherwise $(G_i,\Sigma_i)$
would contain a blocking vertex.
Thus $Y$ is a $3$-$(0,1)$-separation in $(G_1,\Sigma_1)$ and $(G_2,\Sigma_2)$. 
By Lemma~\ref{doublesplit:clean}, $(G_1,\Sigma_1)$ and $(G_2,\Sigma_2)$ are $\Delta$-reducible, a contradiction.
This implies that $w_2 \in \zB_{H_2}(X)$ and the sets $\beta_2 \cap Y$ and $(\delta_{H_2}(w_2) - \beta_2) \cap Y$ are non-empty.
By symmetry between $v_2$ and $w_2$, $v_2 \in \zB_{H_2}(X)$ and the sets $\alpha_2 \cap Y$ and $(\delta_{H_2}(v_2) - \alpha_2) \cap Y$ are non-empty.
\end{proof}
%
%----------------------------------------------------------------------
\subsection{Proofs of Lemmas~\ref{doublesplit:le2}, \ref{doublesplit:le3}, \ref{doublesplit:le4} and~\ref{doublesplit:le0}}\label{sec:lemmaProof}
%----------------------------------------------------------------------
%---------------------------------------------------------
\begin{proof}[\textbf{Proof of Lemma~\ref{doublesplit:le2}}]
Let $\dst=(H_1,v_1,w_1,\alpha_1,\beta_1,H_2,v_2,w_2,\alpha_2,\beta_2,\bS)$ be a quad-template of type I, 
where $\bS=(X_1,\ldots,X_k)$ for some $k \geq 0$. 
For $i=1,2$, let $\Gamma_i := \alpha_i \Delta \beta_i$.
By definition of quad siblings, $(H_1,\Gamma_1)$ and $(H_2,\Gamma_2)$ are equivalent.
Thus $\Gamma_1 \Delta \Gamma_2 = \alpha_1 \Delta \beta_1 \Delta \alpha_2 \Delta \beta_2$ is a cut of $H_1$.
Let $\Gamma_1 \Delta \Gamma_2 = \delta_{H_1}(U)$ for some $U \subseteq V(H_1)$.
By possibly swapping on $v_1$ or $w_1$, we may assume that $v_1,w_1 \notin U$.

\textbf{Case 1:} Suppose $k \geq 1$.
Let $X_{k+1},\ldots, X_t$ be a partition of $E(H_1) - (X_1 \cup \ldots \cup X_k \cup \loops(H_1))$
into minimal $2$-separations having as boundary $\{v_1,w_1\}$ plus possibly edges with ends $v_1$ and $w_1$.
Let $U_j=U \cap V_{H_1}(X_j)$, for every $j \in [t]$.
As $v_1,w_1 \notin U$ and $X_j,X_h$ are disjoint for all distinct $j,h \in [t]$,
the sets $U_1,\ldots,U_t$ are all disjoint.
Suppose that $U_j \neq \emptyset$ for some $j \in [t]$.
Thus $(\Gamma_1 \Delta \Gamma_2) \cap X_j$ is a non-empty cut of $H_1[X_j]$.
By Lemma~\ref{doublesplit:rem4}, there exists a set $Y \subseteq X_j$
such that $\zB_{H_1}(Y) \subseteq \{v_1,w_1\}$; $\zI_{H_1}(Y) \neq \emptyset$;
$\delta_{H_1}(v_1)\cap Y=(\Gamma_1 \Delta \Gamma_2) \cap \delta_{H_1}(v_1) \cap X_j$;
$\delta_{H_1}(w_1)\cap Y=(\Gamma_1 \Delta \Gamma_2) \cap \delta_{H_1}(w_1) \cap X_j$.
As $H_1[X_j] \setminus \{v_1,w_1\}$ is connected, $Y=X_j$ and $U_j=\zI_{H_1}(X_j)$.
Thus for every $j \in [t]$, either $U_j = \emptyset$ or $U_j = \zI_{H_1}(X_j)$.
Therefore $U = \bigcup_{i \in I} \zI_{H_1}(X_i)$, for some $I \subseteq [t]$.
Define the following index sets:
$I_1:=([t] - [k]) \cap I$; $I_2:=[k] - I$; $I_3:=[k] \cap I$; $I_4:=[t] - ([k] \cup I)$.
Note that $I_1,I_2,I_3,I_4$ partition $[t]$. 
The idea is that for each $2$-separation $X_j$ with $\zB_{H_i}(X_j)=\{v_i,w_i\}$, there are four possible choices,
depending whether, when going from $H_1$ to $H_2$, we resign, flip, resign and flip or do not perform any operation in $H_1[X_j]$.
Now partition the edges in $\loops(H_1) \cap \Gamma_1$ as $L_1 \cup L_2$, where $e \in L_1$ if 
$e \in \alpha_1 \cap \alpha_2$ or $e \in \beta_1 \cap \beta_2$ and $e \in L_2$ otherwise.
Finally define $Y_1 := \bigcup_{j \in I_1}(X_j) \cup L_1$;
$Y_2 := \bigcup_{j \in I_2}(X_j)$;
$Y_3 := \bigcup_{j \in I_3}(X_j) \cup L_2$;
$Y_4 := \bigcup_{j \in I_4}(X_j)$.
Then $(G_1,\Sigma_1)$ and $(G_2,\Sigma_2)$ form a shuffle with partition $Y_1,Y_2,Y_3,Y_4$.
 
\textbf{Case 2:} Suppose $k=0$. This implies that $H_1 = H_2$. In this case we may also assume that $v_2,w_2 \notin U$
(by possibly swapping on $v_2,w_2$). 
We now have different cases depending on the cardinality of $\{v_1,w_1\} \cap \{v_2,w_2\}$.

\textbf{Case 2.1:} Suppose $\{v_1,w_1\}=\{v_2,w_2\}$. 
Then, similarly to case 1, we obtain a shuffle (where the sets $Y_2$ and $Y_3$ are empty).

\textbf{Case 2.2:} Suppose $\{v_1,w_1\} \cap \{v_2,w_2\} = \{v_1\}=\{v_2\}$. 
This implies that $\delta(U) \subseteq \delta(v_1) \cup \delta(w_1) \cup \delta(w_2)$. 
Moreover, $\delta(w_1) \cap \delta(U) = \delta(w_1) \cap \Gamma_1$
and $\delta(w_2) \cap \delta(U) = \delta(w_2) \cap \Gamma_2$.
Define $Y_1 := E(H_1[U]) \cup \delta(U)$ and $Y_2 := E(H_1) - (Y_1 \cup \loops(H_1))$.
If $e \in \loops(H_i) - (\alpha_i \cup \beta_i)$, then $e$ is an even loop of $(G_i, \Sigma_i)$,
contradicting the fact that $\ecycle(G_i,\Sigma_i)$ is $3$-connected.
Thus every loop of $H_i$ is either in $\alpha_i$ or in $\beta_i$ (but not both, by definition of unfolding).
Moreover, for $i=1,2$, $(G_i,\Sigma_i)$ does not have parallel edges of the same parity.
It follows that $|\loops(H_i)|\leq 4$ and every edge in $\loops(H_1)$ is in 
exactly one of $\alpha_1,\beta_1$ and in exactly one of $\alpha_2,\beta_2$.
If $\loops(H_1) \cap \beta_1 \cap \alpha_2$ is non-empty, let $e \in \loops(H_1) \cap \beta_1 \cap \alpha_2$.
Similarly, if they exist, define edges $f,g,h \in \loops(H_1)$ as follows:
$f \in \beta_1 \cap \beta_2$; $g \in \alpha_1 \cap \alpha_2$; $h \in \alpha_1 \cap \beta_2$.
Then $(G_1,\Sigma_1)$ and $(G_2,\Sigma_2)$ are related by a twist with partition $Y_1,Y_2,\{e,f,g,h\}$.

\textbf{Case 2.3:} Suppose $\{v_1,w_1\} \cap \{v_2,w_2\} = \emptyset$. 
This implies that $\delta(U) \subseteq \delta(v_1) \cup \delta(w_1) \cup \delta(v_2) \cup \delta(w_2)$. 
Moreover, $\delta(v_i) \cap \delta(U) = \delta(v_i) \cap \Gamma_i$ and $\delta(w_i) \cap \delta(U) = \delta(w_i) \cap \Gamma_i$ for $i=1,2$.
Define $Y_1 := E(H_1[U]) \cup \delta(U), Y_2 := E(H_1) - (Y_1 \cup \loops(H_1))$ and, if they exist, edges $e,f,g,h \in \loops(H_1)$ as follows:
$e \in \alpha_1 \cap \alpha_2$; $f \in \alpha_1 \cap \beta_2$; $g \in \beta_1 \cap \alpha_2$; $h \in \beta_1 \cap \beta_2$.
Then $(G_1,\Sigma_1)$ and $(G_2,\Sigma_2)$ are related by a tilt with partition $Y_1,Y_2,\{e,f,g,h\}$.
\end{proof}

%----------------------------------------------------------
\begin{proof}[\textbf{Proof of Lemma~\ref{doublesplit:le3}}]
Let $\dst=(H_1,v_1,w_1,\alpha_1,\beta_1,H_2,v_2,w_2,\alpha_2,\beta_2,\bS)$ be a quad
template of type II.
Fix $i=1$ or $i=2$. If $e \in \loops(H_i) - (\alpha_i \cup \beta_i)$, then $e$ is an even loop of $(G_i, \Sigma_i)$,
contradicting the fact that $\ecycle(G_i,\Sigma_i)$ is $3$-connected.
Thus every loop of $H_i$ is either in $\alpha_i$ or in $\beta_i$ (but not both, by definition of unfolding).
Moreover, for $i=1,2$, $(G_i,\Sigma_i)$ does not have parallel edges of the same parity.
It follows that $|\loops(H_i)|\leq 4$ and every edge in $\loops(H_1)$ is in 
exactly one of $\alpha_1$ and $\beta_1$ and in exactly one of $\alpha_2$ and $\beta_2$.
Thus we will not consider the behavior of the loops of $H_1$ any further in this proof.
Now we consider two cases, depending on whether $|\bS|=1$ or $|\bS|=2$.

\textbf{Case 1:}
Suppose $|\bS|=1$. We will show that $(G_1,\Sigma_1)$ and $(G_2,\Sigma_2)$ are widget twins.
In this case, $H_2=\Wflip[H_1,X]$ for some $2$-separation $X$ of $H_1$, and
$v_i \in \zB_{H_i}(X)$, for $i=1,2$. Moreover, $w_1 \in \zI_{H_1}(X)$
and, for $Y:= \bar{X} - \loops(H_1)$, $w_2 \in \zI_{H_2}(Y)$.
For $i=1,2$, let $z_i$ be the vertex in $\zB_{H_i}(X)$ distinct from $v_i$.
By swapping the role of $X$ and $Y$ and of $H_1$ and $H_2$, we may assume that
$\delta_{H_1}(v_1)\cap X = \delta_{H_2}(v_2)\cap X$.
Define $\varphi_1:=(\alpha_1 \Delta \alpha_2) \cap X$ and $\varphi_2:=\beta_1 \cap X$.
Let $H:=H_1[X]$.
We have $\varphi_1 \subseteq \delta_H(v_1)$ and $\varphi_2 \subseteq \delta_H(w_1)$.
Moreover, $\varphi_1 \Delta \varphi_2 = (\alpha_1 \Delta \alpha_2 \Delta \beta_1) \cap X$.
By definition of quad siblings, $\alpha_1 \Delta \alpha_2 \Delta \beta_1 \Delta \beta_2$ is a cut of $H_1$. 
As $\beta_2 \cap X$ is empty, $C_1:=(\alpha_1 \Delta \alpha_2 \Delta \beta_1) \cap X$ is a cut of $H$.
First suppose that $C_1$ is empty. Then all the edges in $\beta_1 - \loops(H_1)$ are either in $\alpha_1$
or in $\alpha_2$ (but not both). As $(G_1,\Sigma_1)$ does not contain parallel edges of the same parity,
there cannot be two edges in $\beta_1 \cap \alpha_1$ or in $\beta_1 \cap \alpha_2$.
If $H_1$ contains a $(v_1,w_1)$ edge in $\beta_1 \cap \alpha_1$ (respectively in $\beta_1 \cap \alpha_2$) call such an edge $e$ (respectively $f$).
Let $\gamma = (X \cap \alpha_1) - \{e\}$.
As $C_1$ is empty, $\alpha_2 \cap X = \gamma \cup \{f\}$.

Now suppose that $C_1$ is non-empty. 
If $\delta_H(w_1)=C_1$, we may swap on $w_1$ and reduce to the case where $C_1 = \emptyset$ (as $\delta_H(w_1)=\delta_{H_1}(w_1)$).
Thus we may assume that $C_1 \neq \delta_{H_1}(w_1)$.
By Lemma~\ref{doublesplit:rem4}, there exists $Z \subseteq X$
such that $\zB_H(Z)\subseteq \{v_1,w_1\}$, $\zI_H(Z) \neq \emptyset$, $\delta_H(v_1) \cap Z = \varphi_1 - \varphi_2$
and for $\hat{\varphi}_2 = \varphi_2$ or $\hat{\varphi}_2=\varphi_2 \Delta \delta_H(w_1)$,
we have $\delta_H(w_1) \cap Z = \hat{\varphi}_2 - \varphi_1$.
Note that $Z$ is a $2$-separation in $H_1$, because $\zB_H(Z)\subseteq \{v_1,w_1\}$ and $H_1$ is $2$-connected except for loops.
Let $\hat{Z}:=E(H_1) - (\loops(H_1) \cup \{(v_1,w_1) \in E(H_1)\})$.
The condition $\delta_H(w_1) \cap Z = \hat{\varphi}_2 - \varphi_1$ implies that either
$\delta_H(w_1) \cap Z \subseteq \beta_2$ or $\delta_H(w_1) \cap Z \subseteq \delta_H(w_1)- \beta_2$.
Hence $\hat{Z}$ violates Lemma~\ref{doublesplit:cl1}.

We conclude that, by possibly swapping on $w_1$, $\beta_1 - \loops(H_1) = \{e,f\}$,
$\alpha_1 \cap X = \gamma \cup \{e\}$ and $\alpha_2 \cap X = \gamma \cup \{f\}$.
Now we proceed to consider the structure of $H_1[Y]$.
We assume that every edge with endpoints $v_1$ and $z_1$ in $H_1$ is in $X$.
Define sets $\varphi_1 = \alpha_1 \cap Y$, $\varphi_2 = \alpha_2 \cap Y$ and $\varphi_3 = \beta_2 \cap Y$.
As $\beta_1$ does not intersect $Y$, 
$C_2:= (\alpha_1 \Delta \alpha_2 \Delta \beta_1 \Delta \beta_2) \cap Y=\varphi_1 \Delta \varphi_2 \Delta \varphi_3$.
As $\alpha_1 \Delta \alpha_2 \Delta \beta_1 \Delta \beta_2$ is a cut of $H_1$,
we have that $C_2$ is a cut of $H_1[Y]$.
If $C_2= \emptyset$, then every edge in $\beta_2$ is either contained in $\alpha_1$ or in $\alpha_2$.
Similarly for the edges in $\alpha_1 \cap Y$ and in $\alpha_2 \cap Y$.
As there are no $(v_1,z_1)$ edges in $Y$, we have $\beta_2 -\loops(H_1) = \{a,c\}$
for two edges $a=(v_1,w_2)$ and $c=(z_1,w_2)$ in $H_1$ (if they exist). 
Moreover, $\alpha_1 \cap Y =  \{a\}$ and $\alpha_2 \cap Y = \{c\}$. 
Let $Z=Y - \{a,c\} - \{(v_1,w_2), (z_1,w_2) \in E(H_1)\}$.
Then all the sets $\alpha_1 \cap Z$, $\alpha_2 \cap Z$, $\beta_1 \cap Z$, $\beta_2 \cap Z$, are empty.
Therefore, if $\zI_{H_1}(Z)$ is non-empty, $Z$ is a $3$-$(0,1)$-separation of $(G_1,\Sigma_1)$ and $(G_2,\Sigma_2)$
and $(G_1,\Sigma_1)$ and $(G_2,\Sigma_2)$ are $\Delta$-reducible by Lemma~\ref{doublesplit:clean}, a contradiction.
Hence $Z$ is empty and $Y = \{a,b,c,d\}$, where $b=(v_1,w_2)$, $d=(z_1,w_2)$ (if they exist)
and $b,d \notin \alpha_1 \cup \alpha_2 \cup \beta_2$. 
We conclude that, in the case $C_2= \emptyset$, $(G_1,\Sigma_1)$ and $(G_2,\Sigma_2)$ are widget twins.

Now suppose that $C_2 \neq \emptyset$. Let $H:=H_1[Y]$. 
If $C_2$ is equal to one of the sets $\delta_H(w_2), \delta_H(z_1)$, $\delta_H(\{z_1,w_2\})$,
we may swap on $w_2$ or $v_2$ and reduce to the case where $C_2=\emptyset$ (as $\delta_H(z_1) \subset \delta_{H_2}(v_2)$).
Therefore we may assume that $\varphi_1, \varphi_2, \varphi_3$ satisfy the hypotheses of Lemma~\ref{doublesplit:rem5}. 
Hence there exists a set $W \subseteq Y$ such that $\zB_H(W) \subseteq \{v_1,z_1,w_2\}$, $\zI_H(W)\neq \emptyset$,
and
\begin{enumerate}[\;\;\;(a)]
	\item $\delta_H(v_1) \cap W = \alpha_1 - (\alpha_2 \cup \beta_2)$;
	\item either $\delta_H(z_1) \cap W = \alpha_2 - (\alpha_1 \cup \beta_2)$, or
		$\delta_H(z_1) \cap W = \delta_H(z_1) - (\alpha_1 \cup \alpha_2 \cup \beta_2)$;
	\item either $\delta_H(w_2) \cap W = \beta_2 - (\alpha_1 \cup \alpha_2)$, or
		$\delta_H(w_2) \cap W = \delta_H(w_2) - (\alpha_1 \cup \alpha_2 \cup \beta_2)$.
\end{enumerate}
Therefore $W$ is a $3$-$(0,1)$-separation of $(G_1,\Sigma_1)$ and $(G_2,\Sigma_2)$. 
By Lemma~\ref{doublesplit:clean},  $(G_1,\Sigma_1)$ and $(G_2,\Sigma_2)$
are $\Delta$-reducible, a contradiction. 

\textbf{Case 2:} $|\bS|=2$. We will show that $(G_1,\Sigma_1)$ and $(G_2,\Sigma_2)$ are gadget twins.
In this case $H_2=\Wflip[H_1,(Y,Z)]$ for some disjoint $2$-separations $Y$ and $Z$ of $H_1$, where
$v_i \in \zB_{H_i}(Y) \cap \zB_{H_i}(Z)$, for $i=1,2$, $w_1 \in \zI_{H_1}(Y)$ and $w_2 \in \zI_{H_2}(Z)$.
For $i=1,2$, let $z_i$ be the vertex in $\zB_{H_i}(Y)$ distinct from $v_i$
and $u_i$ the vertex in $\zB_{H_i}(Z)$ distinct from $v_i$.
For $X:=E(H_1) - (Y \cup Z \cup \loops(H_1))$, $\zB_{H_i}(X)=\{v_i,u_i,z_i\}$, for $i=1,2$.
Moreover, we can choose $Y$ and $Z$ so that all the edges in $H_1$ with both ends in $\{v_1,z_1,u_1\}$ are contained in $X$.
By construction, $\delta_{H_1}(v_1) \cap X = \delta_{H_2}(v_2) \cap X$.
Moreover $\delta_{H_1}(z_1) \cap Y = \delta_{H_2}(v_2) \cap Y$
and $\delta_{H_1}(u_1) \cap Z = \delta_{H_2}(v_2) \cap Z$.
Define $\varphi_1=\alpha_2 \cap Y$, $\varphi_2=\alpha_1 \cap Y$ and $\varphi_3=\beta_1 \cap Y$.
Let $H:=H_1[Y]$. So $\varphi_1 \subseteq \delta_H(z_1)$, $\varphi_2 \subseteq \delta_H(v_1)$
and $\varphi_3 \subseteq \delta_H(w_1)$.
Note that $C:=\varphi_1 \Delta \varphi_2 \Delta \varphi_3 = (\alpha_1 \Delta \alpha_2 \Delta \beta_1 \Delta \beta_2) \cap Y$.
As $\alpha_1 \Delta \alpha_2 \Delta \beta_1 \Delta \beta_2$ is a cut of $H_1$,
we have that $C$ is a cut of $H$.

If $C= \emptyset$, then every edge in $\beta_1$ is either contained in $\alpha_1$ or in $\alpha_2$.
Similarly for the edges in $\alpha_1 \cap Y$ and in $\alpha_2 \cap Y$.
As there are no $(v_1,z_1)$ edges in $Y$, we have $\beta_1 -\loops(H_1) = \{a_1,c_1\}$
for two edges $a_1=(v_1,w_1)$ and $c_1=(z_1,w_1)$ in $H_1$ (if they exist). 
Moreover, $\alpha_1 \cap Y =  \{a_1\}$ and $\alpha_2 \cap Y = \{c_1\}$. 
Let $W=Y - \{a_1,c_1\} - \{(v_1,w_1), (z_1,w_1) \in E(H_1)\}$.
Then all the sets $\alpha_1 \cap W$, $\alpha_2 \cap W$, $\beta_1 \cap W$, $\beta_2 \cap W$, are empty.
Therefore, if $\zI_{H_1}(W)$ is non-empty, $W$ is a $3$-$(0,1)$-separation of $(G_1,\Sigma_1)$ and $(G_2,\Sigma_2)$
and $(G_1,\Sigma_1)$ and $(G_2,\Sigma_2)$ are $\Delta$-reducible by Lemma~\ref{doublesplit:clean}, a contradiction.
Hence $W$ is empty and $Y = \{a_1,b_1,c_1,d_1\}$, where $b_1=(v_1,w_1)$, $d_1=(z_1,w_1)$ (if they exist)
and $b_1,d_1 \notin \alpha_1 \cup \alpha_2 \cup \beta_1$.
 
Now suppose that $C \neq \emptyset$. 
If $C$ is equal to one of the sets $\delta_H(v_1), \delta_H(w_1), \delta_H(\{v_1,w_1\})$,
we may swap on $v_1$ or $w_1$ and reduce to the case $C=\emptyset$.
Therefore we may assume that $\varphi_1, \varphi_2, \varphi_3$ satisfy the hypothesis of Lemma~\ref{doublesplit:rem5}. 
Hence there exists a set $W' \subseteq Y$ such that $\zB_H(W') \subseteq \{v_1,z_1,w_1\}$, $\zI_H(W')\neq \emptyset$,
and
\begin{enumerate}[\;\;\;(a)]
	\item $\delta_H(z_1) \cap W' = (\alpha_2 \cap Y) - (\alpha_1 \cup \beta_1)$;
	\item either $\delta_H(v_1) \cap W' = (\alpha_1 \cap Y) - (\alpha_2 \cup \beta_1)$, or
		$\delta_H(v_1) \cap W' = \delta_H(v_1) - (\alpha_1 \cup \alpha_2 \cup \beta_1)$;
	\item either $\delta_H(w_1) \cap W' = (\beta_1 \cap Y) - (\alpha_1 \cup \alpha_2)$, or
		$\delta_H(w_1) \cap W' = \delta_H(w_1) - (\alpha_1 \cup \alpha_2 \cup \beta_1)$.
\end{enumerate}
Therefore $W'$ is a $3$-$(0,1)$-separation of $(G_1,\Sigma_1)$ and $(G_2,\Sigma_2)$. 
By Lemma~\ref{doublesplit:clean},  $(G_1,\Sigma_1)$ and $(G_2,\Sigma_2)$
are $\Delta$-reducible, a contradiction.
We deduce that, up to swaps on $v_1$ and $w_1$, $Y=\{a_1,b_1,c_1,d_1\}$, 
with the conditions on $\alpha_1,\beta_1,\alpha_2,\beta_2$ established before.
Now consider the structure of $H_1[Z]$. 
Define $\varphi_1=\alpha_1 \cap Z$, $\varphi_2=\alpha_2 \cap Z$ and $\varphi_3=\beta_1 \cap Z$.
Then, with an argument similar to the one above, we conclude that, up to possible swaps on $v_2$ and $w_2$,
$Z=\{a_2,b_2,c_2,d_2\}$, where the ends of $a_2$ and $b_2$ are $v_1$ and $w_2$ and
the ends of $c_2$ and $d_2$ are $u_1$ and $w_2$. 
Moreover, $\beta_2 -\loops(H_1)=\{a_2,c_2\}$, $\alpha_1 \cap Z =\{a_2\}$ and $\alpha_2 \cap Z =\{c_2\}$. 

Let $\gamma:=\alpha_1 \cap X$. As $(\alpha_1 \Delta \alpha_2) \cap X$ is a cut of $H_1[X]$,
either $\alpha_2 \cap X = \gamma$ or $\alpha_2 \cap X = (\delta_{H_2}(v_2) \cap X) - \gamma$.
In the second case, $\alpha_1 \Delta \beta_1 \Delta \alpha_2 \Delta \beta_2 = \delta_{H_2}(v_2) \cap X$,
which is not a cut of $H_2$, contradiction. 
It follows that $\alpha_2 \cap X = \gamma$ and $(G_1,\Sigma_1)$ and $(G_2,\Sigma_2)$ are gadget twins. 
\end{proof}

%------------------------------------------------------------------------
\begin{proof}[\textbf{Proof of Lemma~\ref{doublesplit:le4}}]
Let $\dst=(H_1,v_1,w_1,\alpha_1,\beta_1,H_2,v_2,w_2,\alpha_2,\beta_2,\bS)$ and $\bS=(X_1,\ldots,X_k)$.
By Proposition~\ref{conn:star2} applied to $H_1$ and $Z=\{v_1,w_1\}$, there exists a graph $H$ such that:
\begin{itemize}
	\item $H=\Wflip[H_1,\bS_1]$ for some w-sequence $\bS_1$ of $H_1$, where 
		$\{v_1,w_1\} \cap \zB_{H_1}(X)=\emptyset$ for all $X\in \bS_1$, and
	\item $H_2=\Wflip[H,\bS_2]$ for some non-crossing w-sequence $\bS_2$ such that, 
		for all $X \in \bS_2$, $\{v_1,w_1\} \cap \zB_{H_1}(X) \neq \emptyset$.
\end{itemize}

Let $\dst':=(H,v_1,w_1,\alpha_1,\beta_1,H_2,v_2,w_2,\alpha_2,\beta_2,\bS_2)$.
By Remark~\ref{doublesplit:compeasy}, $\dst'$ is a quad-template and $\dst$ and $\dst'$ are compatible.
Thus we may assume that $(G_1,\Sigma_1)$ and $(G_2,\Sigma_2)$ arise from a template
$\dst=(H_1,v_1,w_1,\alpha_1,\beta_1,H_2,v_2,w_2,\alpha_2,\beta_2,\bS)$, where $\bS=(X_1,\ldots,X_k)$ is non-crossing,
and for all $X \in \bS$, $\{v_1,w_1\} \cap \zB_{H_1}(X) \neq \emptyset$.
Similarly we may assume that, for all $X \in \bS$, $\{v_2,w_2\} \cap \zB_{H_2}(X) \neq \emptyset$.
We will also assume that every Whitney-flip in $\bS$ is non-trivial, that is, $\zI_{H_1}(X) \neq \emptyset$ for every $X \in \bS$.

First suppose that, for every $X \in \bS$, $\zB_{H_i}(X)=\{v_i,w_i\}$, for $i=1,2$.
We show that in this case we can find a w-sequence $\bS'$ for $H_1$
such that $\dst':=(H_1,v_1,w_1,\alpha_1,\beta_1,H_2,v_2,w_2,\alpha_2$, $\beta_2,\bS')$ is a quad-template
of type I.
As $\dst'$ is trivially compatible with $\dst$, this would prove the statement for this case.
Suppose that there exists $X \in \bS$ such that $H_i[X] \setminus \zB_{H_i}(X)$ is not connected. 
$\bS$ is non-crossing, thus we may rearrange the sets in $\bS$ in any order. Hence we may assume that $X=X_1$.
As $H_1$ is $2$-connected except for loops, there exists a partition $Y_1,\ldots,Y_s$ of $X$
such that $\zB_{H_i}(Y_j)=\{v_i,w_i\}$ and $H_i[Y_j] \setminus \zB_{H_i}(Y_j)$ is connected for every $i=1,2$ and $j \in [s]$.
Therefore, we can replace $\bS$ with $(Y_1,\ldots,Y_s,X_2,\ldots,X_k)$.
Hence we may assume that $H_i[X_j] \setminus \zB_{H_i}(X_j)$ is connected for every $i=1,2$ and $j\in [k]$.
If there exist distinct $i,j \in [k]$ such that $X_i \cap X_j \neq \emptyset$, then $X_i=X_j$.
Thus we may just remove $X_i$ and $X_j$ from $\bS$.
This will lead to a w-sequence $\bS'$ with the required properties.

Now suppose that there exists $X \in \bS$ with $\zB_{H_i}(X) \neq \{v_i,w_i\}$, for $i=1$ or $i=2$.
We will show that in this case we can find a compatible quad-template of type II.
\begin{claim}\label{doublesplit:cl2}
Let $X \in \bS$ such that $|\zB_{H_i}(X) \cap \{v_i,w_i\}|=1$ and $|\zI_{H_i}(X) \cap \{v_i,w_i\}|=1$
for $i=1$ or $i=2$. Then for $j=3-i$ and $Y:=\bar{X} - \loops(H_i)$,
$|\zB_{H_j}(X) \cap \{v_j,w_j\}|=1$ and $|\zI_{H_j}(Y) \cap \{v_j,w_j\}|=1$.
\end{claim}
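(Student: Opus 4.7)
The strategy is to reduce both equalities in the conclusion to the single statement $w_2\in\zI_{H_2}(Y)$. I would normalize WLOG to $i=1$, $v_1\in\zB_{H_1}(X)$ and $w_1\in\zI_{H_1}(X)$, and then use the running hypothesis $\{v_2,w_2\}\cap\zB_{H_2}(X)\neq\emptyset$ (established just prior to the claim in the proof of Lemma~\ref{doublesplit:le4}) to arrange WLOG that $v_2\in\zB_{H_2}(X)$. Since $\zI_{H_2}(Y)\cap V(H_2[X])=\emptyset$, establishing $w_2\in\zI_{H_2}(Y)$ simultaneously places $w_2$ outside $\zB_{H_2}(X)$, yielding $|\zB_{H_2}(X)\cap\{v_2,w_2\}|=1$, while $v_2\in\zB_{H_2}(X)$ prevents $v_2\in\zI_{H_2}(Y)$, yielding $|\zI_{H_2}(Y)\cap\{v_2,w_2\}|=1$.

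I would argue $w_2\in\zI_{H_2}(Y)$ by contradiction. If $w_2$ were incident only to loops of $H_2$, then after unfolding, the split vertices $w_2^-,w_2^+\in V(G_2)$ would receive only $(w_2^-,w_2^+)$-edges coming from loops of $\beta_2$ (loops of $\alpha_2$ are diverted to $v_2^\pm$), forcing either isolated terminals, a bridge, or a same-parity parallel pair — each contradicting $3$-connectivity of $\ecycle(G_2,\Sigma_2)$. So $w_2\in V(H_2[X])$, and now $v_2,w_2\in V(H_2[X])$, $v_1,w_1\in V(H_1[X])$ with $w_1\in\zI_{H_1}(X)\cap\{v_1,w_1\}$, and the non-triviality hypotheses $\zI_{H_i}(X),\zI_{H_i}(Y)\neq\emptyset$ hold (the former from the standing assumption that every Whitney-flip in $\bS$ is non-trivial, the latter from a parallel observation for the $\bar X$-side). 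Lemma~\ref{doublesplit:cl1} applies with $h=1$, forcing $\zB_{H_2}(X)=\{v_2,w_2\}$ along with the non-emptiness of the four sets $\alpha_2\cap Y$, $(\delta_{H_2}(v_2)-\alpha_2)\cap Y$, $\beta_2\cap Y$, $(\delta_{H_2}(w_2)-\beta_2)\cap Y$. This rules out $w_2\in\zI_{H_2}(X)$ but leaves the residual possibility $w_2\in\zB_{H_2}(X)$.

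To finish, I would dispose of this residual case by exhibiting a forbidden $3$-$(0,1)$-separation. Since $w_1\in\zI_{H_1}(X)$, we have $\beta_1\subseteq\delta_{H_1}(w_1)\cup\loops(H_1)\subseteq X\cup\loops(H_1)$, so $\beta_1\cap Y=\emptyset$; since $\beta_1$ is a signature of $(G_2,\Sigma_2)$ by Remark~\ref{iso:alphabetasign}, every cycle of $G_2$ supported in $Y$ has even parity, so $(G_2[Y],\Sigma_2\cap Y)$ is bipartite. The four-set non-emptiness, combined with the unfolding at $v_2,w_2$ and $3$-connectivity of the matroid (via Lemma~\ref{connectivity:n3c}), ranks $Y$ as a $3$-$(0,1)$-separation of $(G_2,\Sigma_2)$. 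A mirror argument transports this conclusion to $(G_1,\Sigma_1)$ by producing a signature of $(G_1,\Sigma_1)$ that vanishes on $Y$: $\beta_2$ is a signature of $(G_1,\Sigma_1)$ by Remark~\ref{iso:alphabetasign}, and one combines it with a suitable cut of $G_1$ supported off $Y$, invoking the ec-standard property to rule out blocking vertices. Lemma~\ref{doublesplit:clean} then delivers $\Delta$-reducibility of $(G_1,\Sigma_1)$ and $(G_2,\Sigma_2)$, contradicting $\Delta$-irreducibility. The main obstacle is this final transport: $\beta_2\cap Y$ need not be empty, so the bipartite-ness of $(G_1[Y],\Sigma_1\cap Y)$ is not automatic, and the cut adjustment required parallels the end-game in the proof of Lemma~\ref{doublesplit:cl1}; an equally delicate bookkeeping is needed to verify that the separation order of $Y$ in $G_2$ is exactly three rather than four after unfolding.
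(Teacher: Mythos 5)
Your normalization and the first reduction track the paper closely: after arranging $v_1\in\zB_{H_1}(X)$, $w_1\in\zI_{H_1}(X)$, $v_2\in\zB_{H_2}(X)$ and supposing for contradiction that $w_2\in V(H_2[X])$, an application of Lemma~\ref{doublesplit:cl1} with $h=1$ does force $\zB_{H_2}(X)=\{v_2,w_2\}$ together with the non-emptiness of all four sets $\alpha_2\cap Y$, $(\delta_{H_2}(v_2)-\alpha_2)\cap Y$, $\beta_2\cap Y$ and $(\delta_{H_2}(w_2)-\beta_2)\cap Y$. (Your separate dismissal of the case where $w_2$ meets only loops is a reasonable extra precaution; the paper also needs the normalizations that $X$ is chosen inclusion-wise appropriately in $\bS$ so that $H_1[Y]=H_2[Y]$, and that any edge joining the two vertices of $\zB_{H_1}(X)$ is placed in $X$.)

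The end-game for the residual case $w_2\in\zB_{H_2}(X)$ has a genuine gap, and it is not ``delicate bookkeeping.'' You propose to exhibit $Y$ itself as a $3$-$(0,1)$-separation of both signed graphs and invoke Lemma~\ref{doublesplit:clean}, but the conclusion of Lemma~\ref{doublesplit:cl1} that you just obtained works against you: since all four of the sets above are non-empty, each of $v_2^-,v_2^+,w_2^-,w_2^+$ is incident with an edge of $Y$, so in general all four lie on $\zB_{G_2}(Y)$ and $Y$ is a $4$-separation of $G_2$, not a $3$-separation. Moreover, even where the order drops to three, transporting bipartiteness to $(G_1,\Sigma_1)$ is circular as you set it up: the available signature $\beta_2$ of $(G_1,\Sigma_1)$ satisfies both $\beta_2\cap Y\neq\emptyset$ and $(\delta_{H_2}(w_2)-\beta_2)\cap Y\neq\emptyset$, so no swap makes it vanish on $Y$, and the existence of some other cut doing so is exactly the bipartiteness you are trying to prove. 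The paper escapes by using the hypothesis your proposal never invokes, namely that $\alpha_1\tr\alpha_2\tr\beta_1\tr\beta_2$ is a cut of $H_1$: restricted to $Y$ (where $\beta_1$ vanishes) this gives a non-empty cut $C=\bigl((\alpha_1\tr\alpha_2)\cap Y\bigr)\tr(\beta_2\cap Y)$ of $H_2[Y]$ with $C\neq\delta_{H_2}(w_2)$, and Lemma~\ref{doublesplit:rem4} then extracts a proper subset $Z\subset Y$ with $\zB_{H_2}(Z)\subseteq\{v_2,w_2\}$, $\zI_{H_2}(Z)\neq\emptyset$, and $\delta_{H_2}(w_2)\cap Z$ contained entirely in $\beta_2$ or entirely in its complement. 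The complement $W=E(H_1)-(Z\cup\loops(H_1))$ then contradicts Lemma~\ref{doublesplit:cl1}, since one of $\beta_2\cap Z$, $(\delta_{H_2}(w_2)-\beta_2)\cap Z$ is empty. Without this passage to a smaller separation via the cut condition, the contradiction cannot be reached.
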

\begin{cproof}
To simplify the notation we prove the claim for the case $i=1$.
Thus we may assume that $v_1 \in \zB_{H_1}(X)$ and $w_1 \in \zI_{H_1}(X)$.
As $\zB_{H_2}(Z) \cap \{v_2,w_2\}\neq \emptyset$ for every $Z \in \bS$,
we have $\zB_{H_2}(X) \cap \{v_2,w_2\} \neq \emptyset$. Thus we may assume that $v_2 \in \zB_{H_2}(X)$.
Suppose for contradiction that $X$ violates the statement, that is, $w_2 \in V(H_2[X])$.
Note that we may choose $X$ such that for no other $X' \in \bS$ do we have $X \subseteq X'$
or $X \cap \bar{X}' = \emptyset$. By this choice, $H_1[Y]=H_2[Y]$.
If there exists an edge $e$ with ends $\zB_{H_1}(X)$, we will assume that such an edge is in $X$.
By Lemma~\ref{doublesplit:cl1}, $w_2 \in \zB_{H_2}(X)$ and the sets 
$\beta_2 \cap Y$ and $(\delta_{H_2}(w_2) - \beta_2) \cap Y$ are non-empty.
Thus $\zB_{H_2}(X)=\{v_2,w_2\}$.
By symmetry between $v_2$ and $w_2$, we may assume that $\delta_{H_1}(v_1) \cap Y = \delta_{H_2}(v_2) \cap Y$.
Define $\varphi_1 := (\alpha_1 \Delta \alpha_2) \cap Y$ and $\varphi_2 := \beta_2 \cap Y$.
Then $\varphi_1 \subseteq \delta_{H_2}(v_2)$ and  $\varphi_2 \subseteq \delta_{H_2}(w_2)$.
Moreover, $C:=\varphi_1 \Delta \varphi_2$ is a cut of $H_2[Y]$.
As there is no $(v_2,w_2)$ edge in $Y$, the sets $\varphi_1, \varphi_2$ are disjoint.
Moreover, the sets $\beta_2 \cap Y$ and $(\delta_{H_2}(w_2) - \beta_2) \cap Y$ are non-empty,
thus $C$ is non-empty and $C \neq \delta_{H_2}(w_2)$. 
Let $H:=H_2[Y]$.
By Lemma~\ref{doublesplit:rem4}, there exists a set $Z \subset Y$ such that 
$\zB_H(Z) \subseteq \{v_2,w_2\}$; $\zI_H(Z) \neq \emptyset$; $\delta_H(v_2)\cap Y=\varphi_1$; and
for $\hat{\varphi}_2 = \varphi_2$ or $\hat{\varphi}_2=\varphi_2 \Delta \delta_H(w_2)$, $\delta_H(w_2)\cap Y=\hat{\varphi}_2$.
Define $W:= E(H_1) - (Z \cup \loops(H_1))$. Then $W$ contradicts Lemma~\ref{doublesplit:cl1}.
\end{cproof}

Now we can conclude the proof.
We have already considered the case in which, for every $X \in \bS$, $\zB_{H_i}(X)=\{v_i,w_i\}$, for $i=1,2$.
Thus we have that for some $X \in \bS$ and $i=1$ or $i=2$,
$|\zB_{H_i}(X) \cap \{v_i,w_i\}|=1$ and $|\zI_{H_i}(X) \cap \{v_i,w_i\}|=1$. 
Let $Y:=\bar{X}-\loops(H_j)$, for $j=3-i$.
By Claim~\ref{doublesplit:cl2}, $|\zB_{H_j}(X) \cap \{v_j,w_j\}|=1$ 
and $|\zI_{H_j}(Y) \cap \{v_j,w_j\}|=1$.
Thus we may assume that $v_1 \in \zB_{H_1}(X)$, $w_1 \in \zI_{H_1}(X)$, $v_2 \in \zB_{H_2}(X)$ and $w_2 \in \zI_{H_2}(Y)$.
Now suppose that there exists $X' \in \bS$ such that $w_1 \in \zB_{H_1}(X')$.
Let $Y':=\bar{X}' - \loops(H_1)$.
As $w_1\in \zI_{H_1}(X)$, $X$ is not contained in $X'$ and $X'$ is not disjoint from $X$.
As $\bS$ is non-crossing, by possibly swapping $X'$ with $Y'$,
we may assume that $X' \subset X$. Thus $v_1 \notin \zI_{H_1}(X')$.
Moreover, as $w_2 \in \zI_{H_2}(Y)$ and $Y \subset Y'$, we have $w_2 \in \zI_{H_2}(Y')$.
Therefore, by the choice of $\bS$, $v_2 \in \zB_{H_2}(X')$.
Hence $X'$ violates Claim~\ref{doublesplit:cl2}.
This shows that for every $X \in \bS$, $w_1 \notin \zB_{H_1}(X)$.
By symmetry between $H_1$ and $H_2$, for every $X \in \bS$, $w_2 \notin \zB_{H_2}(X)$.
Moreover, as $\zB_{H_i}(X) \cap \{v_i,w_i\} \neq \emptyset$, for $i=1,2$,
we have $v_i \in \zB_{H_i}(X)$ for every $X \in \bS$ and $i=1,2$.
Lemma~\ref{conn:makestar} implies that there exists a w-sequence $\bS'$ of $H_1$
with $H_2=\Wflip[H_1,\bS']$ and that $\bS'$ is a star of $H_i$ with center $v_i$, for $i=1,2$.
Let $\bS'=(Y_1,\ldots,Y_h)$. For distinct $Y,Y' \in \bS'$, $Y$ and $Y'$ are disjoint.
It follows that if $h \geq 3$, then for some $Y \in \bS$, $w_i \notin \zI_{H_i}(Y)$, for $i=1,2$. 
Hence $\bar{Y}-\loops(H_1)$ contradicts Lemma~\ref{doublesplit:cl1}.
Therefore $h=1$ or $h=2$ and $(H_1,v_1,w_1,\alpha_1,\beta_1,H_2,v_1,w_2,\alpha_2,\beta_2,\bS')$ 
is a quad-template of type II, as required.
\end{proof}

%-----------------------------------------------------------------
\begin{proof} [\textbf{Proof of Lemma~\ref{doublesplit:le0}}]
Suppose that $H_1 \setminus \loops(H_1)$ is not $2$-connected. This is equivalent
to $H_2 \setminus \loops(H_2)$ not being $2$-connectred, as $H_1$ and $H_2$ are equivalent.
For $i=1,2$, let $\tau_i$ be the tree of blocks of $H_i \setminus \loops(H_i)$.
So the vertices of $\tau_i$ are partitioned into sets $A_i$ and $\bfB_i$, 
where $A_i$ is the set of the cut-vertices and $\bfB_i$ is the set of blocks of $H_i \setminus \loops(H_i)$.
Note that, as $H_1$ and $H_2$ are equivalent, there is a bijection between the vertices in $\bfB_1$ and the vertices in $\bfB_2$.
By Lemma~\ref{connectivity:n3c}(2), for $i=1,2$, $G_i \setminus \loops(G_i)$ does not contain $1$-separations.
Thus $A_i \subseteq \{v_i,w_i\}$, for $i=1,2$.
In particular this implies that at most one vertex in $\bfB_i$ is not a leaf of $\tau_i$, for $i=1,2$.
Hence there exists $X \in \bfB_1$ which is a leaf of both $\tau_1$ and $\tau_2$.
By symmetry between $v_1$ and $w_1$, we may assume that $\zB_{H_1}(X)=\{v_1\}$. Similarly we may assume that $\zB_{H_2}(X)=\{v_2\}$.
Note that $|X| \geq 2$, as otherwise $X$ would be a bridge of $G_1$.

If, for $i=1$ or $i=2$, $w_i \in V_{H_i}(Y)$, for $Y=X$ or $Y=E(H_1)-(X \cup \loops(H_1))$,
we derive a contradiction by Lemma~\ref{doublesplit:cl1}.
Therefore, by symmetry between $H_1$ and $H_2$, we may assume that $w_1 \in \zI_{H_1}(X)$ and
$w_2 \notin V_{H_2}(X)$.
\begin{claim}\label{doublesplit:clBoh}
$H_1[X]=H_2[X]$.
\end{claim}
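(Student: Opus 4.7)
The plan is to argue by contradiction using Lemma~\ref{doublesplit:cl1}. The key structural observation is that every vertex in $V_{H_i}(X)-\{v_i\}$ is incident only to edges of $X$ in $H_i$, since $X$ is a block of $H_i\setminus\loops(H_i)$ with unique cut vertex $v_i$. Consequently, any $2$-separation $Z$ of $H_i$ whose boundary meets $V_{H_i}(X)-\{v_i\}$ has its entire boundary inside $V_{H_i}(X)$, so either $Z\subseteq X\cup\loops(H_i)$ or $\bar{Z}\subseteq X\cup\loops(H_i)$.

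Write $H_2=\Wflip[H_1,\bS]$ for a non-crossing w-sequence $\bS$. By the observation above, each $Z\in\bS$ either satisfies $\zB_{H_1}(Z)\subseteq V_{H_1}(X)$ (so the flip can alter $H_1[X]$) or satisfies $\zB_{H_1}(Z)\cap(V_{H_1}(X)-\{v_1\})=\emptyset$ (so the flip leaves $H_1[X]$ unchanged). Suppose for contradiction that $H_1[X]\neq H_2[X]$. Then some internal $Z\in\bS$ acts non-trivially, meaning $\zB_{H_1}(Z)$ contains some $u\in V_{H_1}(X)-\{v_1\}$. Set $W:=E(H_1)-(Z\cup\loops(H_1))$. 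By possibly swapping $Z$ with its complement inside $X$, I may assume $w_1\in V(H_1[W])$, using $w_1\in\zI_{H_1}(X)$ and the non-triviality of the flip to ensure that $w_1$ retains an edge in $X-Z$.

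Finally, I apply Lemma~\ref{doublesplit:cl1} to $W$. One verifies $v_1,v_2\in V(H_i[W])$ since each $v_i$ has edges outside $X$, and $w_2\in V(H_2[W])$ since $w_2\notin V_{H_2}(X)\supseteq V_{H_2}(Z)$ forces all edges at $w_2$ into $W$; moreover $w_2\in\zI_{H_2}(W)$. Taking $h=2$ in the lemma, the conclusion forces $\zB_{H_1}(W)=\{v_1,w_1\}$. But $\zB_{H_1}(W)=\zB_{H_1}(Z)\subseteq V_{H_1}(X)$ contains the vertex $u$, and the non-triviality gives $u\notin\{v_1,w_1\}$, yielding the desired contradiction. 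The main obstacle is the careful choice of $Z$ so that $w_1\in V(H_1[W])$ while keeping $u$ distinct from both $v_1$ and $w_1$; the geometric force of the argument is the asymmetry $w_1\in\zI_{H_1}(X)$ versus $w_2\notin V_{H_2}(X)$, which Lemma~\ref{doublesplit:cl1} leverages to impose mutually incompatible boundary constraints on the two sides.
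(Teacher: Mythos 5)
Your argument has a fatal gap at the final step. You derive from Lemma~\ref{doublesplit:cl1} (with $h=2$) that $\zB_{H_1}(W)=\{v_1,w_1\}$, and then claim a contradiction because $u\in\zB_{H_1}(W)$ with $u\notin\{v_1,w_1\}$. But your non-triviality condition only gives $u\in V_{H_1}(X)-\{v_1\}$, and since $w_1\in\zI_{H_1}(X)\subseteq V_{H_1}(X)-\{v_1\}$, nothing rules out $u=w_1$. A Whitney-flip on a $2$-separation of $H_1[X]$ with boundary exactly $\{v_1,w_1\}$ is genuinely non-trivial, changes $H_1[X]$, and is perfectly consistent with the conclusion of Lemma~\ref{doublesplit:cl1}. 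This is precisely the substantive case: the paper's proof uses Lemma~\ref{doublesplit:cl1} only to force $v_1,w_1\in\zB_{H_1}(Y)$ and the non-emptiness of $\alpha_1\cap Y$, $(\delta_{H_1}(v_1)-\alpha_1)\cap Y$, $\beta_1\cap Y$, $(\delta_{H_1}(w_1)-\beta_1)\cap Y$, and then spends the bulk of the argument eliminating such flips by building $\varphi_1,\varphi_2$ from the signatures, invoking Lemma~\ref{doublesplit:rem4} to produce a $3$-$(0,1)$-separation of $(G_1,\Sigma_1)$ and $(G_2,\Sigma_2)$, and contradicting $\Delta$-irreducibility via Lemma~\ref{doublesplit:clean}. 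Your proof never touches $\alpha_i,\beta_i$ or the $\Delta$-irreducibility hypothesis, so it cannot close this case.

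There are also two smaller but real problems upstream. First, you begin with ``$H_2=\Wflip[H_1,\bS]$ for a non-crossing w-sequence $\bS$'': non-crossing sequences are not automatic (that is the whole point of Proposition~\ref{conn:star2}), and the w-sequence formalism is only set up for graphs that are $2$-connected up to loops, which $H_1$ is not in this situation. The paper avoids this by applying Proposition~\ref{conn:star2} to the $2$-connected blocks $H_1[X]$ and $H_2[X]$ and absorbing the flips that avoid $\{v_1,w_1\}$ (and those avoiding $v_2$ on the other side) into a compatible quad-template. Second, your dichotomy ``either $Z\subseteq X\cup\loops(H_i)$ or $\bar{Z}\subseteq X\cup\loops(H_i)$'' is false: with $\zB_{H_1}(Z)=\{u,v_1\}$, the set $Z$ may consist of a piece of $X$ together with entire blocks of $E(H_1)-X$ attached at $v_1$, so that neither $Z$ nor $\bar{Z}$ lies in $X\cup\loops(H_1)$. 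These can likely be repaired, but the missing treatment of the $\zB_{H_1}(Z)=\{v_1,w_1\}$ case cannot be fixed without the signature/parity argument.
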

\begin{cproof}
As $H_1$ and $H_2$ are equivalent and $H_1[X]$ and $H_2[X]$ are $2$-connected,
by Lemma~\ref{conn:star2} there exists a graph $H$ such that:
\begin{itemize}
	\item $H=\Wflip[H_1[X],\bS_1]$ for some w-sequence $\bS_1$, where $v_1,w_1 \notin \zB_{H_1}(Y)$ for all $Y \in \bS_1$, and
	\item $H_2[X]=\Wflip[H,\bS_2]$ for some non-crossing w-sequence $\bS_2$ such that $\zB_{H_1}(Y) \cap \{v_1,w_1\} \neq \emptyset$, 
	for all $Y \in \bS_2$.
\end{itemize}
Suppose that $\bS_1=(Y_1,\ldots,Y_k)$. Then either $Y_1$ or $X-Y_1$ is a $2$-separation in $H_1$ and
$(\Wflip[H_1,Y_1]$, $v_1, w_1, \alpha_1, \beta_1,H_2,v_2,w_2,\alpha_2,\beta_2)$ is a quad-template which
is compatible with $\dst$. By Lemma~\ref{doublesplit:le1}, proving the statement for a compatible
quad-template leads to a proof for the original template.
Thus, by repeating this reasoning on $Y_2,\ldots,Y_k$, we may assume that $\bS_1 = \emptyset$.
Therefore $H_2[X]=\Wflip[H_1[X],\bS]$ for a non-crossing w-sequence $\bS$, where for every $Y \in \bS$, 
$\zB_{H_1}(Y) \cap \{v_1,w_1\} \neq \emptyset$.
Consider $Y \in \bS$. If $v_2 \notin \zB_{H_2}(Y)$, then either $Y$ or $X-Y$ is a $2$-separation of $H_2$
and $(H_1,v_1,w_1,\alpha_1,\beta_1,\Wflip[H_2,Y]$, $v_2$, $w_2,\alpha_2,\beta_2)$ is a quad-template which
is compatible with $\dst$. Thus we may assume that $v_2 \in \zB_{H_2}(Y)$, for every $Y \in \bS$.
In particular, this implies that, for every $Y \in \bS$, both $Y$ and $X-Y$ are $2$-separations in $H_2$.
As $w_2 \notin H_2[X]$, we have $w_2 \notin V_{H_2}(Y),V_{H_2}(X-Y)$, for every $Y \in \bS$.
Note that we may assume that, for every $Y \in \bS$, $\zI_{H_i}(Y), \zI_{H_i}(X-Y) \neq \emptyset$, for $i=1,2$,
otherwise the Whitney-flip on $Y$ is trivial and may be omitted.
As $\zB_{H_1}(Y) \cap \{v_1,w_1\} \neq \emptyset$ and $v_1,w_1 \in V_{H_1}(X)$,
either $v_1,w_1 \in V_{H_1}(Y)$ or $v_1,w_1 \in V_{H_1}(X-Y)$.
By Lemma~\ref{doublesplit:cl1}, $v_1,w_1 \in \zB_{H_1}(Y)$ and all the sets 
$\alpha_1 \cap Y$, $(\delta_{H_1}(v_1)-\alpha_1)\cap Y$, $\beta_1 \cap Y$, and $(\delta_{H_1}(w_1)-\beta_1) \cap Y$
are non-empty.
Fix a minimal $Y \in \bS$. We may assume that no edge $(v_1,w_1)$ is in $Y$. 
Either $\alpha_2 \cap Y \subseteq \delta_{H_1}(v_1)$ or
$\alpha_2 \cap Y \subseteq \delta_{H_1}(w_1)$.
In the first case, define $\varphi_1 = (\alpha_1 \Delta \alpha_2) \cap Y$ and $\varphi_2 = \beta_1 \cap Y$.
In the second case, define $\varphi_1=\alpha_1 \cap Y$ and $\varphi_2=(\beta_1 \Delta \alpha_2) \cap Y$.
In both cases, $\varphi_1 \subseteq \delta_{H_1}(v_1)$ and $\varphi_2 \subseteq \delta_{H_1}(w_1)$.
By definition of quad siblings, $\alpha_1 \Delta \beta_1 \Delta \alpha_2 \Delta \beta_2$
is a cut of $H_1$. As $\beta_2 \cap Y = \emptyset$, this implies that 
$C:=(\alpha_1 \Delta \beta_1 \Delta \alpha_2) \cap Y$ is a cut of $H_1[Y]$.
As all the sets 
$\alpha_1 \cap Y$, $(\delta_{H_1}(v_1)-\alpha_1)\cap Y$, $\beta_1 \cap Y$, and $(\delta_{H_1}(w_1)-\beta_1) \cap Y$
are non-empty and there is no edge with ends $v_1$ and $w_1$ in $Y$, $C$ is a non-empty cut.
Moreover, $C \neq \delta_{H_1}(w_1)$ and $\varphi_1 \cap \varphi_2 = \emptyset$.
By Lemma~\ref{doublesplit:rem4},
there exists $Z \subseteq Y$ such that the following hold:
\begin{itemize}
	\item $\zB_{H_1}(Z) \subseteq \{v_1,w_1\}$;
	\item $\zI_{H_1}(Z) \neq \emptyset$;
	\item $\delta_{H_1}(v_1)\cap Z=\varphi_1$;
	\item for $\hat{\varphi}_2 = \varphi_2$ or $\hat{\varphi}_2=\varphi_2 \Delta \delta_{H_1}(w_1)$,
		$\delta_{H_1}(w_1)\cap Z=\hat{\varphi}_2$.
\end{itemize}
Therefore, for $i=1,2$, $(G_i[Z],\Sigma_i \cap Z)$ is bipartite and $Z$ is a $k_i$-separation of $G_i$,
where $k_i \leq 3$. By Lemma~\ref{connectivity:n3c}, $k_1=k_2=3$ and
$Z$ is a $3$-$(0,1)$-separation in both $(G_1,\Sigma_1)$ and $(G_2,\Sigma_2)$.
By Lemma~\ref{doublesplit:clean}, $(G_1,\Sigma_1)$ and $(G_2,\Sigma_2)$ are $\Delta$-reducible, a contradiction.
We conclude that $\bS = \emptyset$ and $H_1[X]=H_2[X]$.
\end{cproof}
As $X$ is a leaf of $\tau_1$ and $w_1 \in \zI_{H_1}(X)$,
no block of $H_1 \setminus \loops(H_1)$ has as boundary $\{w_1\}$.
Thus, for every $Y \in \bfB_1$, $\zB_{H_1}(Y)=\{v_1\}$.
Suppose that, for some $Y \in \bfB_2$, $\zB_{H_2}(Y)=\{w_2\}$. Thus $v_2 \not \in V_{H_2}(Y)$,
$\zB_{H_1}(Y)=\{v_1\}$ and $w_1 \notin V_{H_1}(Y)$, contradicting Lemma~\ref{doublesplit:cl1}.
It follows that, for every $Y \in \bfB_i$, $\zB_{H_i}(Y)=\{v_i\}$, for $i=1,2$.
If $|\bfB_1|\geq 3$, then for some $Y \in \bfB_1$, $w_i \notin \zI_{H_i}(Y)$, for $i=1,2$,
contradicting Lemma~\ref{doublesplit:cl1}. 
Thus $\bfB_1=\{X,Y\}$ for some set $Y$, where
$w_1 \in \zI_{H_1}(X)$, $w_2 \in \zI_{H_2}(Y)$ and $\zB_{H_i}(X)=\{v_i\}$, for $i=1,2$.
By Claim~\ref{doublesplit:clBoh}, $H_1[X]=H_2[X]$. By symmetry between $H_1$ and $H_2$, we also have $H_1[Y]=H_2[Y]$.
In particular this implies that $w_2$ is a vertex of $H_1$
and $H_2 \setminus \loops(H_2)$ is obtained by identifying a vertex $x \in V(H_1[X])$ with a vertex $y \in V(H_1[Y])$.
Define paths $P_x$ and $P_y$ as follows.
If $x=v_1$, let $P_x$ be a $(w_1,v_1)$-path in $H_1[X]$,
otherwise let $P_x$ be an $(x,v_1)$-path in $H_1[X]$.
If $y=v_1$, let $P_y$ be a $(w_2,v_1)$-path in $H_1[Y]$,
otherwise let $P_y$ be a $(y,v_1)$-path in $H_1[Y]$.
It follows that $P_x$ and $P_y$ are non-empty and $P:=P_x \cup P_y$ is a path of $H_1$.
As $x$ is an end of $P_x$ and $y$ is an end of $P_y$, $P$ is also a path of $H_2$.
For $i=1,2$, construct a graph $H_i'$ by adding to $H_i$ an edge $\Omega$ with ends
the ends of $P$ in $H_i$. Note that $H_1'$ is now $2$-connected, except for the possible presence of loops.
We show that $H_1'$ and $H_2'$ are equivalent by showing that they have the same cycles.
By construction, $P \cup \Omega$ is a cycle in both $H_1', H_2'$.
Let $C$ be a cycle of $H_1'$. If $\Omega \notin C$, $C$ is a cycle of $H_1$ and $H_2$ and we are done.
If $\Omega \in C$, then $C':=C \Delta (P \cup \Omega)$ is a cycle of $H_1'$ not using $\Omega$,
hence it is a cycle of $H_2'$. It follows that $C=C' \Delta (P \cup \Omega)$ is a cycle of $H_2'$.
We conclude that $H_1',H_2'$ are equivalent.
Define a w-sequence for $H_1'$ as follows:
\[
\bS:=
\begin{cases} 
\emptyset & \text{if $x=v_1$ and $y=v_1$}\\
(X) & \text{if $x\neq v_1$ and $y=v_1$}\\
(Y) & \text{if $x = v_1$ and $y\neq v_1$}\\
(X,Y) & \text{if $x\neq v_1$ and $y\neq v_1$.}
\end{cases} 
\]

Then $H_2'=\Wflip[H_1',\bS]$.
For $i=1,2$, if $P$ is $(\alpha_i \Delta \beta_i)$-even, define $\alpha_i':=\alpha_i$,
otherwise set $\alpha_i':=\alpha_i \Delta \delta_{H_i}(\zI_{H_i}(Y))$.
With this choice, $P \cup \Omega$ is an $(\alpha_i'\Delta \beta_i)$-even cycle in $H_i'$, for $i=1,2$.
Therefore $(H_1',\alpha_1'\Delta \beta_1)$ and $(H_2',\alpha_2'\Delta \beta_2)$ have the same even cycles.
Moreover, $\alpha_i \subseteq \delta_{H_i'}(v_i)$.
It follows that $\dst':=(H_1',v_1,w_1,\alpha_1',\beta_1,H_2',v_2,w_2,\alpha_2',\beta_2,\bS)$
is a quad-template. 
Moreover $\dst'$ is of type I if $\bS = \emptyset$ and of type II in the other three cases.
Let $\dst'':=(H_1,v_1,w_1,\alpha_1',\beta_1,H_2,v_2,w_2,\alpha_2',\beta_2)$.
Then $\dst''$ and $\dst$ are compatible quad-templates.
Let $(G_1',\Sigma_1')$ and $(G_2',\Sigma_2')$ (respectively $(G_1'',\Sigma_1'')$ and $(G_2'',\Sigma_2'')$) be the quad
siblings arising from $\dst'$ (respectively $\dst''$). 
By Lemma~\ref{doublesplit:le2} and Lemma~\ref{doublesplit:le3}, 
$(G_1',\Sigma_1')$ and $(G_2',\Sigma_2')$ are either shuffle, tilt, twist, widget or gadget siblings.
For $i=1,2$, $(G_i'',\Sigma_i'')=(G_i',\Sigma_i') \setminus \Omega$, therefore
$(G_1'',\Sigma_1'')$ and $(G_2'',\Sigma_2'')$ are either shuffle, tilt, twist, widget or gadget siblings.
As $\dst$ and $\dst''$ are compatible, the statement follows by Lemma~\ref{doublesplit:le1}.
\end{proof}


\begin{thebibliography}{99}

\bibitem{Gerards00} 
A.M.H. Gerards, 
{\em A few comments on isomorphism of even cycle spaces}, 
unpublished manuscript.

\bibitem{GPW241}
B. Guenin, I. Pivotto, P. Wollan,
{\em Relation between pairs of representations of signed binary matroids},
accepted by the SIAM J. Discrete Math.

\bibitem{GPWbp}
B. Guenin, I. Pivotto, P. Wollan, 
{\em Displaying blocking pairs in signed graphs},
in preparation.

\bibitem{NT}
S. Norine, R. Thomas,
personal communication.

\bibitem{Oxley92} 
J. Oxley, 
{\em Matroid Theory}, 
Oxford University Press, (1992).

\bibitem{OSW}
J. Oxley, C. Semple, G. Whittle, 
{\em The structure of 3-separations of 3-connected matroids}, 
J. Combin. Theory Ser. B \textbf{92} (2004), 257--293.

\bibitem{Shih82} 
C. H. Shih, 
{\em On graphic subspaces of graphic spaces}, 
Ph.D. dissertation, Ohio State Univ. (1982).

\bibitem{Tutte66} 
W. T. Tutte, 
{\em Connectivity in graphs}, 
University of Toronto Press, Toronto, (1966).

\bibitem{Whitney33} 
H. Whitney, 
{\em 2-isomorphic graphs}, 
Amer. J. Math. \textbf{55}  (1933), 245--254.

\end{thebibliography}
\end{document}